\definecolor{labelkey}{rgb}{0,0.08,0.45}
\definecolor{refkey}{rgb}{0,0.6,0.0}
\definecolor{Brown}{rgb}{0.45,0.0,0.05}
\definecolor{nido}{rgb}{1.0,0.2,0.0}
\definecolor{dgreen}{rgb}{0.00,0.40,0.00}
\definecolor{dblue}{rgb}{0,0.08,0.45}
\definecolor{lime}{rgb}{0.00,0.8,0.0}
\definecolor{lblue}{rgb}{0.5,0.5,0.99}
\newcommand{\scal}[2]{\langle{{#1},{#2}}\rangle}
\newcommand{\RR}{\ensuremath{\mathbb R}}
\newcommand{\II}{\ensuremath{\mathbb I}}
\newcommand{\RX}{\ensuremath{\,\left]-\infty,+\infty\right]}}
\newcommand{\RXX}{\ensuremath{\,\left[-\infty,+\infty\right]}}
\newcommand{\thalb}{\ensuremath{\tfrac{1}{2}}}
\newcommand{\menge}[2]{\big\{{#1} \mid {#2}\big\}}
\newcommand{\spand}{\operatorname{span}}
\newcommand{\dom}{\ensuremath{\operatorname{dom}}}
\newcommand{\gra}{\ensuremath{\operatorname{gra}}}
\newcommand{\ran}{\ensuremath{\operatorname{ran}}}
\renewcommand{\phi}{\ensuremath{\varphi}}
\newcommand{\To}{\ensuremath{\rightrightarrows}}
\newcommand{\argmin}{\ensuremath{\operatorname{argmin}}}
\newtheorem{theorem}{Theorem}[section]
\newtheorem{lemma}[theorem]{Lemma}
\newtheorem{fact}[theorem]{Fact}
\newtheorem{corollary}[theorem]{Corollary}
\newtheorem{proposition}[theorem]{Proposition}
\newtheorem{definition}[theorem]{Definition}
\theoremstyle{plain}{\theorembodyfont{\rmfamily}
}
\theoremstyle{plain}{\theorembodyfont{\rmfamily}
}
\theoremstyle{plain}{\theorembodyfont{\rmfamily}
}
\theoremstyle{plain}{\theorembodyfont{\rmfamily}
\newtheorem{example}[theorem]{Example}}
\theoremstyle{plain}{\theorembodyfont{\rmfamily}
\newtheorem{remark}[theorem]{Remark}}
\theoremstyle{plain}{\theorembodyfont{\rmfamily}
}
\begin{document}


\title{{\sffamily On Borwein-Wiersma Decompositions of Monotone Linear
Relations}}

\author{
Heinz H.\ Bauschke\thanks{Mathematics, Irving K.\ Barber School,
UBC Okanagan, Kelowna, British Columbia V1V 1V7, Canada. E-mail:
\texttt{heinz.bauschke@ubc.ca}.},  Xianfu
Wang\thanks{Mathematics, Irving K.\ Barber School, UBC Okanagan,
Kelowna, British Columbia V1V 1V7, Canada. E-mail:
\texttt{shawn.wang@ubc.ca}.}, and Liangjin\
Yao\thanks{Mathematics, Irving K.\ Barber School, UBC Okanagan,
Kelowna, British Columbia V1V 1V7, Canada.
E-mail:  \texttt{ljinyao@interchange.ubc.ca}.}. }

\date{December 14, 2009}

\maketitle


\begin{abstract}
\noindent
%
%
Monotone operators are of basic importance in optimization as
they generalize simultaneously
subdifferential operators of convex functions
and positive semidefinite (not necessarily symmetric) matrices.
In 1970, Asplund
studied the additive decomposition of a maximal
monotone operator as the sum of a
subdifferential operator and an ``irreducible'' monotone operator.
In 2007, Borwein and Wiersma [\emph{SIAM J.\ Optim.}~18 (2007),
pp.~946--960] introduced another additive decomposition, where
the maximal monotone operator is written as the sum of
a subdifferential operator and a ``skew'' monotone operator.
Both decompositions are variants of the well-known
additive decomposition of a matrix via its symmetric and skew part.

This paper presents a detailed study of the Borwein-Wiersma decomposition
of a maximal monotone linear relation.
We give sufficient conditions and characterizations for a maximal monotone linear relation
 to be Borwein-Wiersma decomposable,
 and show that Borwein-Wiersma decomposability
  implies Asplund decomposability.
  We exhibit irreducible linear maximal monotone
 operators without full domain, thus
 answering one of the questions raised by Borwein and Wiersma.
The Borwein-Wiersma decomposition of any maximal monotone
linear relation is made quite explicit in Hilbert space.

\end{abstract}

\noindent {\bfseries 2000 Mathematics Subject Classification:}
Primary 47H05; Secondary 47B25, 47A06, 90C25.

\noindent {\bfseries Keywords:}  Adjoint, Asplund decomposition,
Borwein-Wiersma decomposition, convex function,
irreducible operator, linear operator,
linear relation, maximal monotone operator,  monotone operator,
 skew operator, subdifferential operator,
 symmetric operator, subdifferential operator.

\section{Introduction}

Monotone operators play important roles in convex analysis and
optimization \cite{Rocky,Si2,RockWets,ph,Zalinescu,Si,BurachikIusem,BorVan}.
In the current
literature, there are two decompositions for maximal monotone
operators: the first was
introduced by Asplund in 1970 \cite{Asplund} and the second by
Borwein and Wiersma in 2007 \cite{BorweinWiersma}.
These decompositions
express a maximal monotone operator as the sum of the subdifferential
operator of
a convex function and a singular part (either irreducible or skew),
and they can be viewed as
analogues of the well known decomposition of a matrix into the sum of a
symmetric and a skew part.
They provide intrinsic insight into
the structure of monotone operators and they have the potential to be
employed in numerical algorithms (such as proximal point algorithms
\cite{BK,rockprox}).
It is instructive to study these
decompositions for monotone linear relations
to test the general theory and include
linear monotone operators as interesting special cases
\cite{PheSim,BauBorPJM}. Our goal in this paper is to study
the Borwein-Wiersma decomposition of a maximal monotone linear relation.
It turns out that a complete and elegant
characterization of Borwein-Wiersma decomposability
exists and that the Borwein-Wiersma decomposition can be made
quite explicit (see Theorem~\ref{theo:9} and Example~\ref{EBor:1}).

The paper is organized as follows.
After presenting auxiliary results in Sections~\ref{aux1},
we show in Section~\ref{decompositionpart} that Borwein-Wiersma
decomposability always implies Asplund decomposability,
and we present some
sufficient conditions for a maximal monotone linear relation to be
Borwein-Wiersma decomposable. Section~\ref{uniqueness} is devoted to
the uniqueness of the Borwein-Wiersma decomposition, and we
characterize those linear relations that are subdifferential operators of
proper lower semicontinuous convex functions.
In Section~\ref{characterization}, it is shown that a maximal monotone
linear relation $A$ is Borwein-Wiersma decomposable if and only if
the domain of $A$ is a subset of the domain of its adjoint $A^*$.
This is followed by examples illustrating neither $A$ nor $A^*$ may be
Borwein-Wiersma decomposable. Moreover, it can happen that
$A$ is Borwein-Wiersma decomposable, whereas $A^*$ is not.
Residing in a Hilbert space either $\ell^2$ or $L^{2}[0,1]$,
our examples are irreducible linear maximal monotone operators
without full domain, and they are utilized to provide an
answer to Borwein and Wiersma's
\cite[Question~(4) in Section~7]{BorweinWiersma}.
In Section~\ref{explicit}, we
give more explicit Borwein-Wiersma decompositions in Hilbert spaces.
The paper is concluded by a summary in Section~\ref{summary}.

We start with some
definitions and terminology. Throughout this paper, we assume that
\begin{equation*}
\text{$X$ is a reflexive real Banach space, with topological dual space
$X^*$, and pairing $\scal{\cdot}{\cdot}$.}
\end{equation*}
Let $A$ be a set-valued operator from $X$ to $X^*$.
Then $A$ is \emph{monotone} if
\begin{equation*}
\big(\forall (x,x^*)\in \gra A\big)\big(\forall (y,y^*)\in\gra
A\big) \quad \scal{x-y}{x^*-y^*}\geq 0,
\end{equation*}
where
$\gra A := \menge{(x,x^*)\in X\times X^*}{x^*\in Ax}$;
$A$ is said to be \emph{maximal monotone} if no proper enlargement
(in the sense of graph inclusion) of $A$ is monotone.
The \emph{inverse
operator} $A^{-1}\colon X^*\To X$ is given by $\gra A^{-1} :=
\menge{(x^*,x)\in X^*\times X}{x^*\in Ax}$; the \emph{domain} of $A$ is
$\dom A := \menge{x\in X}{Ax\neq\varnothing}$, and its \emph{range} is $\ran A: = A(X)$.
Note that $A$ is said to be  a \emph{linear relation}
if $\gra A$ is a linear subspace of
$X\times X^*$ (see \cite{Cross}). We say
$A$ is a \emph{maximal monotone linear relation} if $A$ is a maximal
monotone operator and $\gra A$ is a linear subspace of $X\times
X^*$.
The \emph{adjoint} of $A$, written $A^*$, is defined by
\begin{equation*}
\gra A^* :=
\menge{(x,x^*)\in X\times X^*}{(x^*,-x)\in (\gra A)^\bot},
\end{equation*}where, for any subset $S$ of a reflexive Banach space $Z$ with continuous
dual space $Z^*$,
$S^\bot := \menge{z^*\in Z^*}{z^*|_S \equiv 0}$.
Let $A$ be a linear relation from $X$ to $X^*$.
We say that $A$ is
\emph{skew} if $\langle x,x^*\rangle=0,\; \forall (x,x^*)\in\gra A$;
equivalently, if $\gra A \subseteq \gra (-A^*)$.
Furthermore,
$A$ is \emph{symmetric} if $\gra A
\subseteq\gra A^*$; equivalently, if $\scal{x}{y^*}=\scal{y}{x^*}$, 
$\forall (x,x^*),(y,y^*)\in\gra A$.
By saying $A:X\To X^*$ \emph{at most single-valued},
we mean that for every $x\in X$, $Ax$ is either a singleton or empty.
In this case, we follow a slight but common abuse of notation and write
$A \colon \dom A\to X^*$. Conversely, if $T\colon D\to X^*$, we may
identify $T$ with $A:X\To X^*$, where $A$ is at most single-valued with
$\dom A = D$.
We define the \emph{symmetric part} and the \emph{skew part} of $A$ via
\begin{equation}
\label{Fee:1}
A_+ := \thalb A + \thalb A^* \quad\text{and}\quad
A_{\mathlarger{\circ}} := \thalb A - \thalb A^*,
\end{equation}
respectively. It is easy to check that $A_+$ is symmetric and
that $A_{\mathlarger{\circ}}$ is skew.

Let $x\in X$ and $C^*\subseteq X^*$.
We write $\langle x, C^*\rangle :=
\{\langle x, c^*\rangle \mid c^*\in C^*\}$.
If $\langle x, C^*\rangle=\{a\}$ for some constant $a\in\RR$, then we write
$\langle x, C^*\rangle=a$ for convenience.
For a monotone linear relation $A\colon X \To X^*$ it will be very
useful
to define the extended-valued quadratic function
(which is actually a special case of
\emph{Fitzpatrick's last function} \cite{BorVan} for the
linear relation $A$)
\begin{equation} \label{e:diequad}
q_A \colon x\mapsto\begin{cases} \thalb \scal{x}{Ax},&\text{if}\; x\in\dom A;\\
+\infty,&\text{otherwise}.\end{cases}
\end{equation}
When $A$ is linear and single-valued with full domain,
we shall use the well known fact (see, e.g., \cite{PheSim}) that
\begin{equation} \label{e:gradq}
\nabla q_A = A_+.
\end{equation}
For $f\colon X\to \RX$, set
$\dom f:= \{x\in X \mid f(x)<+\infty\}$ and let
$f^*\colon X^*\to\RXX\colon x^*\mapsto
\sup_{x\in X}(\scal{x}{x^*}-f(x))$ be
the \emph{Fenchel conjugate} of $f$. We denote by $\overline{f}$
 the \emph{lower semicontinuous} hull of $f$.
Recall that $f$ is said to be
  \emph{proper} if $\dom f\neq\varnothing$. If $f$ is convex,
   $\partial f\colon X\rightrightarrows X^*\colon x\mapsto \menge{x^*\in X^*}{(\forall y\in
X)\; \scal{y-x}{x^*} + f(x)\leq f(y)}$
is the
 \emph{subdifferential operator} of $f$. For a subset $C$ of $X$, $\overline{C}$ stands for the closure
 of $C$ in $X$.
 Write $\iota_{C}$ for the \emph{indicator function}
 of $C$, i.e., $\iota_{C}(x)=0$, if $x\in C$; and
 $\iota_C(x) = +\infty$, otherwise.
It will be convenient to work with the \emph{indicator mapping} $\II_C\colon X\to X^*$,
defined by $\II_C(x) = \{0\}$, if $x\in C$; $\II_C(x)=\varnothing$,
otherwise.

The central goal of this paper is to provide a detailed analysis
of the following notion in the context of maximal monotone linear
relations.

\begin{definition}[Borwein-Wiersma decomposition \cite{BorweinWiersma}]
The set-valued operator
$A:X\rightrightarrows X^*$ is \emph{Borwein-Wiersma decomposable} if
\begin{equation} \label{e:091206:a}
A=\partial f+S,
\end{equation}
where $f:X\to \RX$ is proper  lower semicontinuous and convex,
and where $S:X\To X^*$ is skew and at most single-valued.
The right side of \eqref{e:091206:a} is a \emph{Borwein-Wiersma
decomposition} of $A$.
\end{definition}
Note that every single-valued
linear monotone operator $A$ with full domain is
Borwein-Wiersma decomposable, with Borwein-Wiersma decomposition
\begin{equation}\label{capitalnews}
A=A_++A_{\mathlarger{\circ}}=\nabla q_A+A_{\mathlarger{\circ}}.
\end{equation}

\begin{definition}[Asplund irreducibility \cite{Asplund}]
The set-valued operator $A\colon X\To X^*$ is
\emph{irreducible} (sometimes termed ``acyclic''
\emph{\cite{BorweinWiersma}})
if whenever
$$ A=\partial f+S,$$
 with $f:X\rightarrow\RX$ proper lower semicontinuous and
  convex, and $S:X\To X^*$ monotone,
then necessarily $\ran (\partial f)|_{\dom A}$ is a singleton.
\end{definition}

As we shall see in Section~\ref{decompositionpart},
the following decomposition is
less restrictive.

\begin{definition}[Asplund decomposition \cite{Asplund}]
The set-valued operator
  $A\colon X\To X^*$ is \emph{Asplund decomposable}
if
\begin{equation}
\label{e:091206:b}
A=\partial f+S,
\end{equation}
where $f\colon X\to\RX $  is proper, lower semicontinuous, and convex,
and where $S$ is  irreducible.
The right side of \eqref{e:091206:b} is an \emph{Asplund decomposition} of
$A$.
\end{definition}

\section{Auxiliary results on monotone linear relations}\label{aux1}
In this section, we gather some basic properties about monotone linear relations, and conditions
for them to be maximal monotone.
These results are used frequently in the sequel.
We start with properties for general linear relations.

\begin{fact}[Cross]\label{Rea:1}
Let $A:X \rightrightarrows X^*$ be a linear relation.
Then the following hold.
\begin{enumerate}
\item \label{Th:26} $A0$ is a linear subspace of $X^*.$
\item \label{Th:28}$Ax=x^* +A0,\quad\forall x^*\in Ax.$
\item \label{Th:30}
$(\forall (\alpha,\beta)\in\RR^2\smallsetminus\{(0,0)\})$ $(\forall x,y\in\dom A)$
$A(\alpha x+\beta y)=\alpha Ax+\beta Ay$.
\item \label{Th:27} $\dom A^*= \menge{x\in X}{\scal{x}{A(\cdot)}
\text{ is single-valued and continuous on $\dom A$}}$.
\item \label{Th:29} $(A^*)^{-1}=(A^{-1})^*$.
\item \label{Sia:2b}$(\forall x\in \dom A^*)(\forall y\in\dom A)$
$\langle A^*x,y\rangle=\langle x,Ay\rangle$ is a singleton.
\item\label{Th:31} If $\gra A$ is closed, then $A^{**} = A$.
\item\label{Th:32} If $\dom A$ is closed, then $\dom A^*$ is closed.
\end{enumerate}
\end{fact}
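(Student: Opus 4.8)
The plan is to take the eight items in order; each is a short consequence of the single fact that $\gra A$ is a linear subspace of $X\times X^*$, combined with standard reflexive-space duality. In the write-up I would cite Cross's monograph \cite{Cross} for the routine bookkeeping while recording the arguments below.

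Items (i)--(iii) use linearity of $\gra A$ and nothing else. For (i), $A0$ is the image under the second coordinate projection of the subspace $\gra A\cap(\{0\}\times X^*)$, hence a subspace of $X^*$. For (ii), given $x^*\in Ax$ one has $y^*\in Ax\iff(0,y^*-x^*)=(x,y^*)-(x,x^*)\in\gra A\iff y^*-x^*\in A0$. For (iii), with $(\alpha,\beta)\neq(0,0)$ pick $x^*\in Ax$ and $y^*\in Ay$; then $\alpha x^*+\beta y^*\in A(\alpha x+\beta y)$ by linearity of $\gra A$, and by (ii) both $A(\alpha x+\beta y)$ and $\alpha Ax+\beta Ay$ equal $\alpha x^*+\beta y^*+A0$, where one uses $\alpha A0+\beta A0=A0$ (valid since $A0$ is a subspace and $(\alpha,\beta)\neq(0,0)$).

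Items (iv)--(vi) come from unwinding the definition of $\gra A^*$. Writing out $(x^*,-x)\in(\gra A)^\bot$ shows that $x\in\dom A^*$ precisely when there is $x^*\in X^*$ with $\scal{y}{x^*}=\scal{x}{y^*}$ for all $(y,y^*)\in\gra A$. Given such an $x^*$: for $y\in\dom A$ the quantity $\scal{x}{Ay}$ equals $\scal{y}{x^*}$, hence is single-valued and is the restriction to $\dom A$ of the continuous linear functional $x^*$ --- this is one inclusion of (iv); conversely, if $\scal{x}{A(\cdot)}$ is single-valued and (by (iii)) a continuous linear functional on the subspace $\dom A$, extend it by Hahn--Banach to some $x^*\in X^*$ and read the displayed identity backwards to conclude $(x^*,-x)\in(\gra A)^\bot$. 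Then (vi) is immediate: for $x\in\dom A^*$ and any $x^*\in A^*x$ we have $\scal{x^*}{y}=\scal{x}{y^*}$ whenever $y^*\in Ay$, so $\scal{A^*x}{y}=\scal{x}{Ay}$ is the singleton provided by (iv). Item (v) is a direct graph chase: using reflexivity, $\gra(A^*)^{-1}$ and $\gra(A^{-1})^*$ both reduce to $\{(x^*,x)\mid\scal{y}{x^*}=\scal{x}{y^*}\ \forall\,(y,y^*)\in\gra A\}$.

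For (vii), let $R\colon X\times X^*\to X^*\times X\colon(x,x^*)\mapsto(x^*,-x)$, a topological isomorphism, so that $\gra A^*=\{w\in X\times X^*\mid R(w)\in(\gra A)^\bot\}$; iterating this and invoking the bipolar identity $S^{\bot\bot}=\overline S$ for subspaces $S$ of the reflexive space $X\times X^*$, a short sign-careful computation gives $\gra A^{**}=\overline{\gra A}$, which equals $\gra A$ under the hypothesis. Finally, (viii) carries the real content. I would first reduce to $\gra A$ closed: annihilators are insensitive to closure, so $A^*$ depends only on $\overline{\gra A}$, and since $\dom A$ closed forces $\overline{\gra A}\subseteq\dom A\times X^*$ we get $\dom\overline A=\dom A$; thus one may replace $A$ by $\overline A$, whereupon $A0$ is closed. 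Now the \emph{single-valued} map $x\mapsto Ax+A0$ from the Banach space $\dom A$ into the Banach space $X^*/A0$ has closed graph, hence is continuous by the closed graph theorem, and combining this with the characterization (iv) yields $\dom A^*=(A0)^\bot$, which is closed. (Equivalently, via (v) the assertion is that $\ran(A^{-1})^*$ is closed whenever $\ran A^{-1}=\dom A$ is, which is essentially the closed-range theorem for linear relations, \cite{Cross}.) The closed-graph-theorem step in (viii), and getting the reduction to $\overline A$ right, is the one place I expect to need genuine care; items (i)--(vii) are routine unwinding.
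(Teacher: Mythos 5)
Your proposal is correct. Note, however, that the paper offers no argument at all for this Fact: each item is disposed of by a citation to the corresponding result in Cross's monograph, so there is no ``paper proof'' to compare against beyond those references. Your self-contained derivations of (i)--(vii) are the standard unwindings of the linearity of $\gra A$ and of the definition of $\gra A^*$ (with Hahn--Banach supplying the converse inclusion in (iv) and the bipolar theorem in the reflexive space $X\times X^*$ giving $\gra A^{**}=\overline{\gra A}$ for (vii)), and they check out. The only item with real content is (viii), and your treatment is sound: the reduction to $\overline{\gra A}$ is legitimate because $(\gra A)^\bot=(\overline{\gra A})^\bot$ while $\dom A$ closed forces $\overline{\gra A}\subseteq \dom A\times X^*$, and the induced single-valued map $\dom A\to X^*/A0$ has closed graph (its preimage under $\mathrm{id}\times q$ is $\gra A$), so the closed graph theorem gives boundedness and hence, via your item (iv), $\dom A^*=(A0)^\bot$, which is closed. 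This is essentially the content of the chain of results in Cross that the paper cites for (viii), so your route is the expected one, just written out explicitly.
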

\begin{proof}
\ref{Th:26}: See \cite[Corollary~I.2.4]{Cross}.
\ref{Th:28}: See \cite[Proposition~I.2.8(a)]{Cross}.
\ref{Th:30}: See \cite[Corollary~I.2.5]{Cross}.
\ref{Th:27}: See \cite[Proposition~III.1.2]{Cross}.
\ref{Th:29}: See \cite[Proposition~III.1.3(b)]{Cross}.
\ref{Sia:2b}: See \cite[Proposition~III.1.2]{Cross}.
\ref{Th:31}: See \cite[Exercise~VIII.1.12]{Cross}.
\ref{Th:32}: See \cite[Corollary~III.4.3(a), Proposition~III.4.9(i)(ii),
 Theorem~III.4.2(a) and Corollary~III.4.5]{Cross}.
\end{proof}

Additional information is available when dealing with \emph{monotone}
linear relations.

\begin{fact}\label{linear}
Let $A:X\To X^*$ be a monotone linear relation.
Then the following hold.
\begin{enumerate}
\item\label{Nov:s1}
$\dom A \subseteq (A0)^{\perp}$ and $A0\subseteq
(\dom A)^\bot.$
\item\label{Nov:s2}
The function
$\dom A\to\RR\colon y\mapsto \langle y, Ay\rangle$
is well defined and convex.
\item\label{Sia:2c}
For every $x\in (A0)^{\perp}$, the function
$\dom A\to\RR\colon y\mapsto \langle x, Ay\rangle$
is well defined and linear.
\item \label{sia:3iv}
If $A$ is maximal monotone, then
$\overline{\dom A^*}=\overline{\dom A}=(A0)^\bot$ and
$A0=A^*0=A_+0 = A_{\mathlarger{\circ}}0= (\dom A)^{\perp}$.
\item \label{sia:3v}
If $\dom A$ is closed, then:
$A$ is maximal monotone $\Leftrightarrow$ $(\dom A)^\bot = A0$.
\item \label{sia:3vi}
If $A$ is maximal monotone and $\dom A$ is closed,
then $\dom A^*=\dom A$.
\item \label{sia:3vii}
If $A$ is maximal monotone and $\dom A \subseteq \dom A^*$,
then
$A=A_{+}+A_{\mathlarger{\circ}}$, $A_{+}=A-A_{\mathlarger{\circ}}$
, and $A_{\mathlarger{\circ}}=A-A_{+}$.
\end{enumerate}
\end{fact}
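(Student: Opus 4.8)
The plan is to prove the seven items in the order stated, since each relies on the earlier ones, and the only item requiring genuine work is \ref{sia:3iv}. For \ref{Nov:s1} I fix $x\in\dom A$ and $x^*\in Ax$; since $(0,y^*)\in\gra A$ for every $y^*\in A0$ and $\gra A$ is a subspace, $(x,x^*+ty^*)\in\gra A$ for all $t\in\RR$, so monotonicity against $(0,0)$ gives $\langle x,x^*\rangle+t\langle x,y^*\rangle\ge 0$ for all $t\in\RR$, forcing $\langle x,y^*\rangle=0$; this is precisely $\dom A\subseteq(A0)^\perp$, and $A0\subseteq(\dom A)^\perp$ is the transpose reading of the same statement. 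For \ref{Nov:s2} and \ref{Sia:2c}, Fact~\ref{Rea:1}\ref{Th:28} gives $Ay=y^*+A0$ for any $y^*\in Ay$, and \ref{Nov:s1} gives $\langle x,A0\rangle=\{0\}$ for $x\in(A0)^\perp$; hence $y\mapsto\langle x,Ay\rangle=\langle x,y^*\rangle$ is well defined on $\dom A$ whenever $x\in(A0)^\perp$, and is linear by Fact~\ref{Rea:1}\ref{Th:30}, while $y\mapsto\langle y,Ay\rangle=\langle y,y^*\rangle$ is well defined and convex because
\[
\lambda\langle x,x^*\rangle+(1-\lambda)\langle y,y^*\rangle-\big\langle\lambda x+(1-\lambda)y,\,A(\lambda x+(1-\lambda)y)\big\rangle=\lambda(1-\lambda)\langle x-y,\,x^*-y^*\rangle\ge 0
\]
for $x,y\in\dom A$, $x^*\in Ax$, $y^*\in Ay$, $\lambda\in[0,1]$ (computing $A(\lambda x+(1-\lambda)y)$ by Fact~\ref{Rea:1}\ref{Th:30} and using monotonicity for the last inequality).

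The heart is \ref{sia:3iv}. First, $\gra A$ is closed: any limit of points of $\gra A$ is monotonically related to all of $\gra A$, so maximality puts it back in $\gra A$; in particular $A0=\{x^*\mid(0,x^*)\in\gra A\}$ is a closed subspace. Next, $\overline{\dom A}=(A0)^\perp$: ``$\subseteq$'' is \ref{Nov:s1} together with closedness of annihilators, and for ``$\supseteq$'' I argue by contradiction — if $z\in(A0)^\perp\setminus\overline{\dom A}$, then Hahn--Banach applied to the closed subspace $\overline{\dom A}$ produces $z^*\in(\dom A)^\perp$ with $\langle z,z^*\rangle\ne 0$; the relation with graph $\gra A+\RR(0,z^*)$ is then monotone (the cross terms $\langle x-y,z^*\rangle$ vanish since $z^*\perp\dom A$) and strictly enlarges $A$ (as $z^*\notin A0$, else $\langle z,z^*\rangle=0$), contradicting maximality. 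The inclusion $A0\subseteq A^*0$ is read off the definition of the adjoint (for $y^*\in A0$, \ref{Nov:s1} gives $(y^*,0)\in(\gra A)^\perp$, i.e.\ $(0,y^*)\in\gra A^*$), and conversely $(0,y^*)\in\gra A^*$ forces $y^*\in(\dom A)^\perp=(\overline{\dom A})^\perp=((A0)^\perp)^\perp=\overline{A0}=A0$; the same chain yields $(\dom A)^\perp=A0$, and then $A_+0=\thalb A0+\thalb A^*0=A0$ and $A_{\mathlarger{\circ}}0=\thalb A0-\thalb A^*0=A0$ because $A0$ is a linear subspace. Finally $\overline{\dom A^*}=(A0)^\perp$: ``$\subseteq$'' follows from Fact~\ref{Rea:1}\ref{Th:27}, since single-valuedness of $\langle x,A(\cdot)\rangle$ forces $\langle x,A0\rangle=\{0\}$; for ``$\supseteq$'' I apply the identity $\overline{\dom B}=(B0)^\perp$ just proved with $B=A^*$. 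This last step is the main obstacle: it needs that $A^*$ is again a maximal monotone linear relation, which I would either cite (the adjoint of a maximal monotone linear relation is maximal monotone) or establish separately — equivalently, that $\dom A^*$ is dense in $(A0)^\perp$; everything else in \ref{sia:3iv} is self-contained.

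Items \ref{sia:3v}--\ref{sia:3vii} are then short. In \ref{sia:3v}, ``$\Rightarrow$'' is part of \ref{sia:3iv}; for ``$\Leftarrow$'', pick a maximal monotone extension $\widehat A\supseteq A$ (Zorn), so by \ref{Nov:s1} $\widehat A0\subseteq(\dom\widehat A)^\perp\subseteq(\dom A)^\perp=A0\subseteq\widehat A0$, hence $\widehat A0=A0$, and \ref{sia:3iv} applied to $\widehat A$ gives $\overline{\dom\widehat A}=(\widehat A0)^\perp=(A0)^\perp=\overline{\dom A}=\dom A$ (as $\dom A$ is closed); thus $\dom\widehat A=\dom A$, and since $Ax\neq\varnothing$ and $\gra A\subseteq\gra\widehat A$, Fact~\ref{Rea:1}\ref{Th:28} gives $\widehat Ax=x^*+\widehat A0=x^*+A0=Ax$ for $x\in\dom A$, so $\widehat A=A$ is maximal monotone. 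Item \ref{sia:3vi} is immediate from \ref{sia:3iv} and Fact~\ref{Rea:1}\ref{Th:32}: $\dom A^*$ is closed, whence $\dom A^*=\overline{\dom A^*}=\overline{\dom A}=\dom A$. For \ref{sia:3vii}, under $\dom A\subseteq\dom A^*$ the domains of $A_+$, $A_{\mathlarger{\circ}}$, $A_++A_{\mathlarger{\circ}}$, $A-A_{\mathlarger{\circ}}$ and $A-A_+$ all equal $\dom A$; and on $\dom A$, writing $Ax=x^*+A0$ and (by Fact~\ref{Rea:1}\ref{Th:28} together with $A^*0=A0$) $A^*x=w^*+A0$, one gets $A_+x=\thalb(x^*+w^*)+A0$ and $A_{\mathlarger{\circ}}x=\thalb(x^*-w^*)+A0$, whence $A_++A_{\mathlarger{\circ}}=A$, $A-A_{\mathlarger{\circ}}=A_+$ and $A-A_+=A_{\mathlarger{\circ}}$.
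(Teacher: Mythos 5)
Your write-up is far more self-contained than the paper's, which disposes of items \ref{Nov:s1}--\ref{sia:3v} by citing external results; your arguments for \ref{Nov:s1}--\ref{Sia:2c}, the bulk of \ref{sia:3iv}, and \ref{sia:3vi}--\ref{sia:3vii} are correct and essentially reconstruct what is being cited. In \ref{sia:3iv}, the one step you flag yourself --- that $\overline{\dom A^*}\supseteq(A0)^\perp$ needs $A^*$ to be a maximal monotone linear relation --- is indeed where the real content sits; citing the Br\'ezis--Browder theorem (recorded in the paper as Fact~\ref{Sv:7}, an external result, so there is no circularity) closes it, since your identity $\overline{\dom B}=(B0)^\perp$ then applies to $B=A^*$ together with $A^*0=A0$. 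With that citation made explicit, \ref{sia:3iv} is complete.

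The genuine gap is in the ``$\Leftarrow$'' direction of \ref{sia:3v}. Zorn's lemma produces a maximal monotone extension $\widehat A$ of $A$, but nothing guarantees that $\gra\widehat A$ is a \emph{linear subspace}: already for $\gra A=\{(0,0)\}$ in $\RR$, the operator $\partial\iota_{[0,1]}$ is a maximal monotone extension that is not a linear relation. Yet your argument applies \ref{Nov:s1}, \ref{sia:3iv}, and Fact~\ref{Rea:1}\ref{Th:28} to $\widehat A$, all of which are statements about linear relations; in particular the inclusion $\widehat A0\subseteq(\dom\widehat A)^\perp$ can fail for a nonlinear maximal monotone operator, so the chain $\widehat A0\subseteq(\dom\widehat A)^\perp\subseteq(\dom A)^\perp$ is unjustified. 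The repair is the same device you already use inside \ref{sia:3iv}: take any $(z,z^*)$ monotonically related to $\gra A$ and verify directly that $\gra A+\RR(z,z^*)$ is again a monotone \emph{linear} relation (for $(u,u^*)\in\gra A$ and $r\ne 0$, apply monotone relatedness to $(u/r,u^*/r)$ and multiply by $r^2$ to get $\langle u+rz,\,u^*+rz^*\rangle\ge 0$ after replacing $u$ by $-u$). Applying \ref{Nov:s1} to this enlargement $B$ yields $z\in\dom B\subseteq(B0)^\perp\subseteq(A0)^\perp=((\dom A)^\perp)^{\perp}=\dom A$, using the hypothesis $(\dom A)^\perp=A0$ and closedness of $\dom A$; then for any $w^*\in Az$ one has $z^*-w^*\in B0\subseteq(\dom B)^\perp\subseteq(\dom A)^\perp=A0$, so $z^*\in w^*+A0=Az$ by Fact~\ref{Rea:1}\ref{Th:28}, and $A$ is maximal. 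With this replacement the whole proof goes through.
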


\begin{proof}
\ref{Nov:s1}: See \cite[Proposition~2.2(i)]{BWY3}.
\ref{Nov:s2}: See \cite[Proposition~2.3]{BWY3}.
\ref{Sia:2c}: See \cite[Proposition~2.2(iii)]{BWY3}.
\ref{sia:3iv}:
By \cite[Theorem~3.2]{BWY3}, we have $A0 = A^*0 = (\dom A)^\bot = (\dom
A^*)^\bot$ and $\overline{\dom A} = \overline{\dom A^*}$.
Hence $(A0)^\bot = (\dom A)^{\bot\bot} = \overline{\dom A}$.
By Fact~\ref{Rea:1}\ref{Th:26}, $A0$ is a linear subspace of $X^*$.
Hence $A_+0 = (A0 + A^*0)/2 = (A0 + A0)/2 = A0$ and similarly
$A_{\mathlarger{\circ}}0 = A0$.
\ref{sia:3v}: See \cite[Corollary~6.6]{BWY3}.
\ref{sia:3vi}: Combine \ref{sia:3iv} with Fact~\ref{Rea:1}\ref{Th:32}.
\ref{sia:3vii}: We show only the proof of $A=A_{+}+A_{\mathlarger{\circ}}$
as the other two proofs are analogous.
Clearly, $\dom A_{+}=\dom A_{\mathlarger{\circ}}
=\dom A\cap\dom A^*=\dom A$.
Let $x\in\dom A$, and $x^*\in Ax$ and $y^*\in A^*x$. We write
$x^*=\tfrac{x^*+y^*}{2}+\tfrac{x^*-y^*}{2}\in (A_{+}+A_{\mathlarger{\circ}})x$.
Then, by Fact~\ref{Rea:1}\ref{Th:28},
$Ax=x^*+A0=x^*+(A_{+}+A_{\mathlarger{\circ}})0=
(A_{+}+A_{\mathlarger{\circ}})x$.
Therefore, $A=A_{+}+A_{\mathlarger{\circ}}$.
\end{proof}

\begin{proposition}\label{MPP}
Let $S\colon X\To X^*$ be a linear relation such
that $S$ is at most single-valued.
Then $S$ is skew
if and only if
$\langle Sx,y\rangle=-\langle Sy,x\rangle,\; \forall x, y\in\dom S$.
\end{proposition}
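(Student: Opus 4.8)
The plan is to reduce the skewness condition, via the at-most-single-valued hypothesis, to the scalar identity $\langle x,Sx\rangle=0$ on $\dom S$, and then to polarize. First I would note that $\gra S$ being a linear subspace forces $(0,0)\in\gra S$, hence $0\in\dom S$ and $S0\ni 0$; single-valuedness then gives $S0=\{0\}$, and for each $x\in\dom S$ the quantity $\langle x,Sx\rangle$ is a well-defined real number. Consequently $S$ is skew if and only if $\langle x,Sx\rangle=0$ for every $x\in\dom S$.

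For the direction ``$\Leftarrow$'', assuming $\langle Sx,y\rangle=-\langle Sy,x\rangle$ for all $x,y\in\dom S$, I would set $y=x$ to obtain $\langle Sx,x\rangle=-\langle Sx,x\rangle$, whence $\langle x,Sx\rangle=0$; by the reduction above, $S$ is skew.

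For the direction ``$\Rightarrow$'', assume $S$ is skew and fix $x,y\in\dom S$. Since $\dom S$ is a subspace, $x+y\in\dom S$, and Fact~\ref{Rea:1}\ref{Th:30} (with $\alpha=\beta=1$) together with single-valuedness yields $S(x+y)=Sx+Sy$. Then
\[
0=\langle x+y,S(x+y)\rangle=\langle x,Sx\rangle+\langle y,Sy\rangle+\langle x,Sy\rangle+\langle y,Sx\rangle=\langle x,Sy\rangle+\langle y,Sx\rangle,
\]
so $\langle Sx,y\rangle=-\langle Sy,x\rangle$, as required.

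I expect no genuine obstacle here: the only points requiring a little care are that $\langle x,Sx\rangle$ is scalar-valued and that $S(x+y)=Sx+Sy$ holds as an equality of (singleton) sets, both of which are immediate from the at-most-single-valued hypothesis and Fact~\ref{Rea:1}.
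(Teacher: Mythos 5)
Your proof is correct and follows essentially the same route as the paper: both directions are handled by polarizing $\langle x+y,S(x+y)\rangle=0$ for ``$\Rightarrow$'' and by setting $y=x$ for ``$\Leftarrow$''. The extra care you take with $S(x+y)=Sx+Sy$ and the single-valuedness of $S$ is a harmless elaboration of what the paper leaves implicit.
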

\begin{proof}
``$\Rightarrow$'': Let $x,y\in\dom S$. Then
$0=\langle S(x+y), x+y\rangle=\langle Sx,x\rangle+
\langle Sy,y\rangle+\langle Sx,y\rangle+\langle Sy,x\rangle
=\langle Sx,y\rangle+\langle Sy,x\rangle$.
Hence $\langle Sx,y\rangle=-\langle Sy,x\rangle$.
``$\Leftarrow$'': Indeed, for $x\in \dom S$, we have
$\scal{Sx}{x}=-\scal{Sx}{x}$ and so $\scal{Sx}{x}=0$.
\end{proof}

\begin{fact}[Br\'ezis-Browder] \emph{(See \cite[Theorem~2]{Brezis-Browder}.)}
\label{Sv:7}
Let $A\colon X \To X^*$ be a monotone linear relation
such that $\gra A$ is closed. Then the following are equivalent.
\begin{enumerate}
\item
$A$ is maximal monotone.
\item
$A^*$ is maximal monotone.
\item
$A^*$ is monotone.
\end{enumerate}
\end{fact}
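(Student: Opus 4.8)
The plan is to reduce the whole statement to one equivalence, proved for an arbitrary monotone linear relation with closed graph, and then bootstrap to the adjoint. Note first that $A^{**}=A$ by Fact~\ref{Rea:1}\ref{Th:31}, and that $\gra A^*$ is again a closed linear subspace of $X\times X^*$, being the image of the closed annihilator $(\gra A)^\bot$ under the topological isomorphism $(\xi^*,\xi)\mapsto(-\xi,\xi^*)$. Since maximal monotonicity trivially implies monotonicity, (ii)$\Rightarrow$(iii). I would then prove the statement $(\star)$: \emph{for every monotone linear relation $B$ with $\gra B$ closed, $B$ is maximal monotone if and only if $B^*$ is monotone}. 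Granting $(\star)$, the choice $B:=A$ gives (i)$\iff$(iii); the choice $B:=A^*$---legitimate under (iii), since then $A^*$ is a monotone linear relation with closed graph and $(A^*)^*=A^{**}=A$ is monotone by hypothesis---gives ``$A^*$ maximal monotone $\iff$ $A$ monotone'', hence (iii)$\Rightarrow$(ii); together with (ii)$\Rightarrow$(iii) this closes the loop.

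To prove $(\star)$ I would pass to the language of quadratic forms on the reflexive space $V:=X\times X^*$. Put $q\colon V\to\RR\colon(x,x^*)\mapsto\scal{x}{x^*}$, a continuous quadratic form with polar bilinear form $b\big((x,x^*),(y,y^*)\big)=\thalb\big(\scal{x}{y^*}+\scal{y}{x^*}\big)$; since $b$ is nondegenerate in the strong sense that $v\mapsto b(v,\cdot)$ is a topological isomorphism $V\to V^*$, one has $(G^{\bot_b})^{\bot_b}=G$ for every closed subspace $G$ of $V$, where $G^{\bot_b}:=\{v\in V\mid b(v,g)=0\ \text{for all }g\in G\}$ (for $G=\gra B$ this is just the identity $B^{**}=B$). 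Because the graph of a linear relation $B$ is a subspace, $B$ is monotone $\iff q\ge0$ on $\gra B$; and a one-line computation shows that if $\gra B\cup\{(a,a^*)\}$ is monotone then so is the subspace $\gra B+\RR(a,a^*)$, so $B$ is maximal monotone $\iff\gra B$ is maximal among subspaces of $V$ on which $q\ge0$. From the definition of the adjoint, $(\gra B)^{\bot_b}=\gra(-B^*)$, and since $q(x,-x^*)=-q(x,x^*)$, ``$q\le0$ on $\gra(-B^*)$'' means exactly ``$B^*$ monotone''. Thus $(\star)$ is equivalent to the purely linear-algebraic claim: \emph{a closed subspace $G$ of $V$ with $q\ge0$ on $G$ is maximal with that property if and only if $q\le0$ on $G^{\bot_b}$}.

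One implication of this claim is immediate and already yields (i)$\Rightarrow$(iii): if $v\in G^{\bot_b}$ with $q(v)>0$, then $v\notin G$ and $q(g+tv)=q(g)+t^2q(v)\ge0$ for all $g\in G$ and $t\in\RR$, so $G+\RR v$ properly enlarges $G$ while keeping $q\ge0$, contradicting maximality. For the converse (which gives (iii)$\Rightarrow$(i)), assume $q\ge0$ on $G$ and $q\le0$ on $G^{\bot_b}$, take $v\notin G$, and suppose for contradiction that $q(g+tv)\ge0$ for all $g\in G$ and $t\in\RR$. Viewing this as a nonnegative quadratic in $t$ gives $q(v)\ge0$ and $b(g,v)^2\le q(v)q(g)$, so $g\mapsto b(g,v)$ is a linear functional on $G$ dominated by a multiple of the seminorm $g\mapsto q(g)^{1/2}$; the plan is to represent it as $g\mapsto b(g,g_0)$ for some $g_0\in G$. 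Granting that, $w:=v-g_0\in G^{\bot_b}$ and $b(g_0,w)=0$, so $0\le q(g_0+tw)=q(g_0)+t^2q(w)$ for all $t$ forces $q(w)=0$; Cauchy--Schwarz for the positive-semidefinite form $-q$ on $G^{\bot_b}$ then places $w$ in the radical of $q|_{G^{\bot_b}}$, which equals $G^{\bot_b}\cap(G^{\bot_b})^{\bot_b}=G^{\bot_b}\cap G\subseteq G$. Hence $v=g_0+w\in G$, a contradiction.

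I expect the main obstacle to be exactly the representation step of the previous paragraph. In finite dimensions (or in Hilbert space) the dominated functional $g\mapsto b(g,v)$ is automatically of the form $b(\,\cdot\,,g_0)$ with $g_0\in G$, and the claim then reduces to the classical fact that a maximal positive-semidefinite subspace of a hyperbolic form is the $b$-orthogonal complement of a maximal negative-semidefinite one. For a general reflexive Banach space $V$, however, one must genuinely use the closedness of $\gra A$ together with the reflexivity of $X$ to find the representative $g_0$ \emph{inside} $G$---rather than merely in a completion of $G$ for the seminorm $q(\,\cdot\,)^{1/2}$. This is precisely the analytic core of the Br\'ezis-Browder theorem \cite{Brezis-Browder}; in a self-contained account one would either quote it here or, equivalently, re-derive the required surjectivity of $A$ by a Minty/Rockafellar argument in the reflexive space $X$.
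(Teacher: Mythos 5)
The paper itself offers no proof of this statement: it is quoted verbatim as a fact, with a pointer to \cite[Theorem~2]{Brezis-Browder}. So the only question is whether your argument is self-contained, and it is not, as you yourself concede. The soft parts are all correct and cleanly done: $A^{**}=A$ and the closedness of $\gra A^*$; the translation of monotonicity of a linear relation into ``$q\geq 0$ on the graph'' and of maximal monotonicity into maximality among subspaces of $X\times X^*$ on which $q\geq 0$ (your one-line computation that a monotonically related point generates a monotone subspace is right); the identity $(\gra B)^{\bot_b}=\gra(-B^*)$ and the reformulation of ``$B^*$ monotone'' as ``$q\leq 0$ on $(\gra B)^{\bot_b}$''; the implication (i)$\Rightarrow$(iii) via $q(g+tv)=q(g)+t^2q(v)$; the trivial (ii)$\Rightarrow$(iii); and the bootstrap that would give (iii)$\Rightarrow$(ii) by applying your claim $(\star)$ to $B=A^*$.

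The genuine gap is that the decisive implication (iii)$\Rightarrow$(i) (equivalently, the hard half of $(\star)$) is never proved. Everything reduces to the representation step you flag: given $v$ monotonically related to $G=\gra A$, produce $g_0\in G$ itself --- not merely in the abstract completion of $G$ under the seminorm $q(\cdot)^{1/2}$ --- with $b(\cdot,g_0)=b(\cdot,v)$ on $G$. That is exactly where the closedness of $\gra A$ and the reflexivity of $X$ must be exploited by a genuine variational argument (for instance a renorming plus a Minty-type surjectivity statement for $A+J$, or a Fitzpatrick-function/Fenchel-duality argument as in \cite{Yao}), and your proposal supplies none of it: quoting \cite[Theorem~2]{Brezis-Browder} at that point is circular, since that theorem is precisely the statement under proof, and ``a Minty/Rockafellar argument'' is named but not carried out (note that Rockafellar's surjectivity criterion for maximality is itself a nontrivial theorem requiring a renorming). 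So what you have is an elegant reduction of the Br\'ezis--Browder theorem to its analytic core, plus an honest admission that the core is missing. For the purposes of this paper that is no worse than what the authors do --- they simply cite the result --- but as a proof of the statement it is incomplete.
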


\begin{fact}[Phelps-Simons]\emph{(See \cite[Corollary 2.6 and Proposition~3.2(h)]{PheSim}.)}\label{F:1}
Let $A\colon X\rightarrow X^*$ be  monotone and linear. Then $A$ is
maximal monotone and continuous.
\end{fact}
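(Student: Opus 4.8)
\noindent\emph{Proof proposal.}
The plan is to obtain maximal monotonicity from a direct variational argument and continuity from the Closed Graph Theorem, both built on one elementary lemma. First, since $A$ is linear and everywhere defined we have $A0=0$, so monotonicity (applied to the pair $x$ and $0$) gives $\scal{x}{Ax}\geq 0$ for every $x\in X$. Next I would isolate the following self-contained claim, which uses only $\dom A = X$ and not continuity: \emph{if $(z,z^*)\in X\times X^*$ satisfies $\scal{z-x}{z^*-Ax}\geq 0$ for all $x\in X$, then $z^*=Az$.} To prove it, substitute $x=z+ty$ with $t>0$ and $y\in X$ arbitrary; expanding and using linearity of $A$ yields $-t\scal{y}{z^*-Az}+t^2\scal{y}{Ay}\geq 0$, hence $\scal{y}{z^*-Az}\leq t\scal{y}{Ay}$; letting $t\downarrow 0$ forces $\scal{y}{z^*-Az}\leq 0$, and replacing $y$ by $-y$ gives $\scal{y}{z^*-Az}=0$ for all $y\in X$, i.e.\ $z^*=Az$.

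This claim is precisely the statement that no pair lying outside $\gra A$ is monotonically related to $\gra A$; by the definition of maximality (no proper monotone enlargement), $A$ is therefore maximal monotone. For continuity I would show $\gra A$ is closed in $X\times X^*$ and then invoke the Closed Graph Theorem, which applies since $X$ and $X^*$ are Banach spaces. So let $x_n\to x$ in $X$ and $Ax_n\to y^*$ in $X^*$. For each fixed $u\in X$, monotonicity gives $\scal{x_n-u}{Ax_n-Au}\geq 0$; because $x_n\to x$ and $Ax_n\to y^*$ both hold in norm, the left-hand side converges to $\scal{x-u}{y^*-Au}$, which is thus $\geq 0$ for every $u\in X$. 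By the lemma, $y^*=Ax$, so $\gra A$ is closed and hence $A$ is continuous.

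The only point requiring care is the limit passage in the last step: it relies on $\scal{x_n}{Ax_n}\to\scal{x}{y^*}$, which is legitimate exactly because the Closed Graph Theorem setup provides \emph{norm} convergence of both factors; a weaker convergence would not suffice. Beyond that, everything is routine bilinear bookkeeping together with the definitions of monotonicity and maximality, and no use of reflexivity of $X$ is needed here (it is needed elsewhere in the paper). As an alternative to the closed-graph route, one could instead deduce boundedness of $A$ from the local boundedness of monotone operators at interior points of their domain, combined with linearity to globalize the bound; I expect the closed-graph argument above to be the cleaner and more self-contained option.
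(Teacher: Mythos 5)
Your proposal is correct: the paper states this result as a Fact and supplies only a citation to Phelps--Simons, and your argument is a sound, self-contained reconstruction of the standard proof of that cited result --- maximality via the perturbation $x=z+ty$, $t\downarrow 0$ (which shows every pair monotonically related to $\gra A$ already lies in $\gra A$), and continuity via closedness of $\gra A$ plus the Closed Graph Theorem, with the limit passage justified by norm convergence of both factors. Both halves check out, and you are right that reflexivity of $X$ plays no role here.
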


\begin{remark}Fact~\ref{F:1} also holds in locally convex spaces,
see \cite[Proposition~23]{VZ08}.
\end{remark}

\begin{proposition}\label{symm=adj}
Let $A\colon X\To X^*$ be a maximal monotone linear relation.
Then $A$ is symmetric $\Leftrightarrow$ $A=A^*$.
\end{proposition}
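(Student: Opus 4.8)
The plan is to dispatch the trivial implication by unwinding definitions, and to reduce the substantive one to the Br\'ezis--Browder theorem (Fact~\ref{Sv:7}).

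For $A=A^*\Rightarrow A$ symmetric: this is immediate, since $A=A^*$ gives $\gra A=\gra A^*$, in particular $\gra A\subseteq\gra A^*$, which is precisely the definition of symmetry.

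For the converse, assume $A$ is symmetric, i.e.\ $\gra A\subseteq\gra A^*$. The idea is to upgrade this inclusion to an equality using maximality of $A$, and for that it suffices to know that $A^*$ is monotone. Since $A$ is maximal monotone, $\gra A$ is norm-closed: if $(x_n,x_n^*)\to(x,x^*)$ with $(x_n,x_n^*)\in\gra A$, then letting $n\to\infty$ in the inequality $\scal{x_n-y}{x_n^*-y^*}\ge 0$ (valid for every $(y,y^*)\in\gra A$) shows that $(x,x^*)$ is monotonically related to $\gra A$, hence $(x,x^*)\in\gra A$ by maximality. Therefore Fact~\ref{Sv:7} applies and yields that $A^*$ is monotone. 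Now $\gra A^*$ is a monotone subset of $\ZZZ$ containing $\gra A$, so maximal monotonicity of $A$ forces $\gra A^*=\gra A$, that is, $A=A^*$.

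I expect the only point needing care to be the appeal to closedness of $\gra A$ (required in order to invoke Fact~\ref{Sv:7}); this is the standard fact that maximal monotone operators have closed graphs, and it can alternatively just be cited. I also note that one cannot shortcut the argument through Fact~\ref{linear}\ref{sia:3vii} alone: symmetry does give $\dom A\subseteq\dom A^*$ and, using $A0=A^*0$ from Fact~\ref{linear}\ref{sia:3iv}, even $Ax=A^*x$ for all $x\in\dom A$; but this does not by itself exclude $\dom A\subsetneq\dom A^*$, and it is precisely maximality of $A$, played against the monotone enlargement $A^*$, that rules this out.
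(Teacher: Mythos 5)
Your proof is correct and follows essentially the same route as the paper: both directions reduce to the Br\'ezis--Browder theorem (Fact~\ref{Sv:7}) to get monotonicity of $A^*$, and then maximality of $A$ upgrades the inclusion $\gra A\subseteq\gra A^*$ to equality. Your explicit verification that $\gra A$ is closed (so that Fact~\ref{Sv:7} applies) is a point the paper leaves implicit, but it is not a different argument.
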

\begin{proof} ``$\Rightarrow$":
Assume that $A$ is symmetric, i.e., $\gra A\subseteq\gra A^*$.
Since $A$ is maximal monotone, so is $A^*$ by Fact~\ref{Sv:7}.
Therefore, $A = A^*$.
``$\Leftarrow$": Obvious.
\end{proof}

{Fact}~\ref{linear}\ref{sia:3v} provides a characterization of
maximal monotonicity for certain monotone linear relations.
More can be said in finite-dimensional spaces.
We require the following lemma, where $\dim F$ stands for the dimension
of a subspace $F$ of $X$.

\begin{lemma}\label{pm:1}
Suppose that $X$ is finite-dimensional and let
$A\colon X\To X^*$ be a linear relation.  Then
$\dim (\gra A)=\dim (\dom A)+\dim A0$.
\end{lemma}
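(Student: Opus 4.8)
The plan is to exhibit $\gra A$ as the total space of a short exact sequence of finite-dimensional vector spaces whose ends are $A0$ and $\dom A$, so that the dimension formula follows from the rank–nullity theorem.

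First I would consider the canonical projection onto the first coordinate, $\pi \colon \gra A \to X \colon (x,x^*) \mapsto x$. This is clearly linear, and by the definition of $\dom A$ its range is exactly $\dom A$, so $\ran \pi = \dom A$. Its kernel is $\menge{(0,x^*)}{(0,x^*)\in\gra A} = \{0\}\times A0$, which is linearly isomorphic to $A0$ (the latter being a linear subspace of $X^*$ by Fact~\ref{Rea:1}\ref{Th:26}). Since $X$ is finite-dimensional, every subspace in sight is finite-dimensional and the rank–nullity theorem applies to $\pi$, giving
\begin{equation*}
\dim(\gra A) = \dim(\kernel \pi) + \dim(\ran \pi) = \dim A0 + \dim(\dom A),
\end{equation*}
which is the claim.

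There is essentially no obstacle here: the only things to check are the three identifications $\ran\pi=\dom A$, $\kernel\pi=\{0\}\times A0$, and $\{0\}\times A0\cong A0$, each of which is immediate from the definitions together with the fact that $\gra A$ is a subspace of $X\times X^*$. If one wanted to avoid even invoking rank–nullity as a black box, one could instead pick a basis $x_1,\dots,x_k$ of $\dom A$, choose $x_i^*\in Ax_i$ for each $i$ (possible since $x_i\in\dom A$), pick a basis $y_1^*,\dots,y_m^*$ of $A0$, and verify directly that $\{(x_i,x_i^*)\}_{i=1}^k \cup \{(0,y_j^*)\}_{j=1}^m$ is a basis of $\gra A$ using Fact~\ref{Rea:1}\ref{Th:28}; but the projection argument is cleaner and is the route I would take.
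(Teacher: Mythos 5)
Your proof is correct. It takes a slightly different route from the paper: you apply the rank--nullity theorem to the first-coordinate projection $\pi\colon\gra A\to X$, identifying $\ran\pi=\dom A$ and $\kernel\pi=\{0\}\times A0\cong A0$, whereas the paper proves the identity by explicitly constructing a basis of $\gra A$ --- taking a basis $\{x^*_1,\dots,x^*_k\}$ of $A0$, a basis $\{x_{k+1},\dots,x_l\}$ of $\dom A$, and showing via Fact~\ref{Rea:1}\ref{Th:28} that $\{(0,x^*_1),\dots,(0,x^*_k),(x_{k+1},x^*_{k+1}),\dots,(x_l,x^*_l)\}$ is a basis of $\gra A$. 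The two arguments are really the same computation packaged differently: the paper's basis construction is the hands-on proof of rank--nullity for this particular $\pi$, and indeed the alternative you sketch at the end of your proposal is exactly the paper's argument. Your version is a bit cleaner and makes the structural reason for the formula transparent; the paper's version is more self-contained in that it does not invoke rank--nullity as a black box and directly exhibits the basis it needs. Either is perfectly acceptable here.
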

\begin{proof}
We shall construct a basis of $\gra A$.
By Fact~\ref{Rea:1}\ref{Th:26}, $A0$ is a linear subspace.
Let $\{x^*_1,\ldots,x^*_k\}$ be a basis of $A0$, and let
$\{x_{k+1},\ldots,x_l\}$ be a basis of $\dom A$.  From
Fact~\ref{Rea:1}\ref{Th:28}, it is easy to show
$\{(0,x^*_1),\ldots,(0,x^*_k),(x_{k+1},x^*_{k+1}),\ldots,(x_l,x^*_l)\}$ is
a basis of $\gra A,$\ where $x^*_i\in Ax_i,\ i\in\{k+1,\ldots,l\}.$
Thus $\dim (\gra A)=l=\dim (\dom A)+\dim A0.$
\end{proof}

Lemma~\ref{pm:1} allows us to get a satisfactory
characterization of maximal monotonicity of
linear relations in finite-dimensional spaces.

\begin{proposition}
Suppose that $X$ is finite-dimensional, set $n=\dim X$, and
let $A\colon X\To X^*$ be a monotone linear relation.
Then $A$ is maximal monotone if and only if $\dim\gra A=n$.
\end{proposition}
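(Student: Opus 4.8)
The plan is to combine the dimension identity of Lemma~\ref{pm:1} with the characterization of maximal monotonicity for monotone linear relations with closed domain in Fact~\ref{linear}\ref{sia:3v}. The observation that makes everything go through is that in a finite-dimensional $X$ the set $\dom A$, being the image of the subspace $\gra A$ under the (linear) projection onto $X$, is a subspace and hence automatically closed, so Fact~\ref{linear}\ref{sia:3v} is applicable; moreover $\dim(\dom A)^\perp = n-\dim\dom A$ by elementary linear algebra, and $A0\subseteq(\dom A)^\perp$ by Fact~\ref{linear}\ref{Nov:s1}.

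Granting this, both implications are immediate. For ``$\Rightarrow$'': if $A$ is maximal monotone, then Fact~\ref{linear}\ref{sia:3v} gives $A0=(\dom A)^\perp$, so $\dim A0 = n-\dim\dom A$, and Lemma~\ref{pm:1} yields $\dim\gra A = \dim\dom A + \dim A0 = n$. For ``$\Leftarrow$'': if $\dim\gra A = n$, then Lemma~\ref{pm:1} forces $\dim A0 = n-\dim\dom A = \dim(\dom A)^\perp$; since $A0\subseteq(\dom A)^\perp$ and these are finite-dimensional subspaces of the same dimension, the inclusion is an equality $A0=(\dom A)^\perp$, and Fact~\ref{linear}\ref{sia:3v} then gives that $A$ is maximal monotone.

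I do not expect a genuine obstacle; the only care needed is in checking that the hypotheses of the cited results hold (closedness of $\dom A$ for Fact~\ref{linear}\ref{sia:3v}, which is free in finite dimensions, and finite-dimensionality for Lemma~\ref{pm:1}, which is assumed), and in remembering that maximal monotonicity is defined purely by graph inclusion, so no a priori closedness of $\gra A$ is being smuggled in. If one wanted a proof bypassing Fact~\ref{linear}\ref{sia:3v}, the alternative is to note that monotonicity of a linear relation amounts to $\langle x,x^*\rangle\ge 0$ for all $(x,x^*)\in\gra A$, i.e.\ that $\gra A$ is a nonnegative subspace for the symmetric bilinear form $\tfrac12(\langle x,y^*\rangle+\langle y,x^*\rangle)$ on $X\times X^*$, which has signature $(n,n)$; such subspaces have dimension at most $n$, and one shows that a nonnegative subspace of dimension exactly $n$ admits no proper monotone enlargement (even among non-subspaces) by writing it as (radical)\,$\oplus$\,(positive-definite part) and testing a hypothetical extra point against a negative direction in the quotient by the radical — the bookkeeping with the induced nondegenerate form of signature $(n-\dim V_0,n-\dim V_0)$ on that quotient is the only slightly delicate part of this variant, which is why routing through the already-established Fact~\ref{linear}\ref{sia:3v} is preferable.
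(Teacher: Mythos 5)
Your proof is correct and follows essentially the same route as the paper: both rest on Lemma~\ref{pm:1} together with the criterion of Fact~\ref{linear}\ref{sia:3v} (applicable since subspaces of a finite-dimensional space are closed), plus the inclusion from Fact~\ref{linear}\ref{Nov:s1} and a dimension count. The only cosmetic difference is that you compare $A0$ with $(\dom A)^\perp$ while the paper compares $\dom A$ with $(A0)^\perp$; these are equivalent by taking orthogonal complements in finite dimensions.
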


\begin{proof}
Since linear subspaces of $X$ are closed,
we see from {Fact}~\ref{linear}\ref{sia:3v} that
\begin{equation}
\label{pp1}
A \text{ is   maximal  monotone} \Leftrightarrow
\dom A=(A0)^\bot.
\end{equation}
Assume first that $A$ is maximal monotone. Then $
\dom A=(A0)^\bot.$ By Lemma~\ref{pm:1},
$ \dim (\gra A)=\dim (\dom A)+\dim (A0)
=\dim ((A0)^\bot)+\dim (A0) =n$.
Conversely, let $\dim (\gra A)=n$.
By Lemma~\ref{pm:1}, we have
that $\dim (\dom A)=n-\dim (A0)$.
As $\dim ((A0)^\bot)=n-\dim (A0)$  and $\dom A\subseteq (A0)^\bot$
by Fact~\ref{linear}\ref{Nov:s1},
we have that $\dom A=(A0)^\bot$.
By \eqref{pp1}, $A$ is maximal monotone.
\end{proof}

\section{Borwein-Wiersma decompositions}
\label{decompositionpart}

The following fact, due to Censor, Iusem and Zenios \cite{censor,iusem},
was previously known in $\RR^{n}$. Here we
give a different proof and extend the result to Banach spaces.

\begin{fact}[Censor, Iusem and Zenios]\label{paramono}
The subdifferential operator
of a proper lower semicontinuous convex function $f\colon X\to \RX$ is
\emph{paramonotone}, i.e., if
\begin{equation}\label{subinclusion}
x^*\in\partial f(x), \quad y^*\in
\partial f(y),
\end{equation}
and \begin{equation}\label{equaltozero}
\scal{x^*-y^*}{x-y}=0,
\end{equation}
then $x^*\in \partial f(y)$ and $y^*\in \partial f(x)$.
\end{fact}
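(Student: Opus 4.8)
The plan is to prove paramonotonicity of $\partial f$ directly from the subgradient inequality, bypassing any appeal to monotonicity theory. Suppose $x^*\in\partial f(x)$, $y^*\in\partial f(y)$, and $\scal{x^*-y^*}{x-y}=0$. First I would write down the two subgradient inequalities in the form that isolates the relevant quantities: from $x^*\in\partial f(x)$ we get $f(y)-f(x)\ge\scal{x^*}{y-x}$, and from $y^*\in\partial f(y)$ we get $f(x)-f(y)\ge\scal{y^*}{x-y}$. Adding these two inequalities yields $0\ge\scal{x^*}{y-x}+\scal{y^*}{x-y}=-\scal{x^*-y^*}{x-y}=0$, so in fact both of the displayed inequalities hold with equality:
\begin{equation*}
f(y)-f(x)=\scal{x^*}{y-x}\quad\text{and}\quad f(x)-f(y)=\scal{y^*}{x-y}.
\end{equation*}

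From here the conclusion is almost immediate. To see that $x^*\in\partial f(y)$, take an arbitrary $z\in X$; I want $f(z)-f(y)\ge\scal{x^*}{z-y}$. Using $x^*\in\partial f(x)$ gives $f(z)-f(x)\ge\scal{x^*}{z-x}$, and subtracting the equality $f(y)-f(x)=\scal{x^*}{y-x}$ from this yields $f(z)-f(y)\ge\scal{x^*}{z-x}-\scal{x^*}{y-x}=\scal{x^*}{z-y}$, which is exactly the desired subgradient inequality at $y$. The symmetric computation — using $y^*\in\partial f(y)$ together with the equality $f(x)-f(y)=\scal{y^*}{x-y}$ — gives $y^*\in\partial f(x)$.

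There is essentially no obstacle here: the argument is elementary and uses only the definition of the subdifferential and the hypothesis \eqref{equaltozero}; properness, lower semicontinuity, and reflexivity play no role, and in fact the statement holds for any proper convex $f$ on any real vector space paired with a dual. The only point requiring a modicum of care is the bookkeeping when passing from the two equalities to the two membership claims, i.e., making sure the $z$-independent terms cancel correctly; but this is routine. One could alternatively phrase the whole thing in terms of the Fenchel--Young equality $f(x)+f^*(x^*)=\scal{x}{x^*}$, noting that $x^*\in\partial f(x)$ iff equality holds, and that the hypothesis forces $f(y)+f^*(x^*)=\scal{y}{x^*}$ as well; I would mention this as a remark but carry out the direct computation above in the body of the proof.
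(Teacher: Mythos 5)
Your proof is correct, and it takes a mildly but genuinely different route from the paper's. The paper argues via the Fenchel conjugate: it writes the Fenchel--Young equalities $f^*(x^*)+f(x)=\scal{x^*}{x}$ and $f^*(y^*)+f(y)=\scal{y^*}{y}$, adds them, uses \eqref{equaltozero} to rearrange the sum into $[f^*(x^*)+f(y)-\scal{x^*}{y}]+[f^*(y^*)+f(x)-\scal{y^*}{x}]=0$, and concludes from the Fenchel--Young inequality that each bracket vanishes, which is precisely the assertion $x^*\in\partial f(y)$ and $y^*\in\partial f(x)$. You instead stay entirely at the level of the subgradient inequality: you add the two primal inequalities, use \eqref{equaltozero} to force both to hold with equality, and then verify the subgradient inequality at the swapped points directly by subtracting the equality from the inequality at an arbitrary test point $z$. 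The two arguments share the same engine --- a sum of two nonnegative quantities forced to vanish by the orthogonality hypothesis --- but yours avoids the conjugate altogether at the cost of one extra (routine) verification against arbitrary $z$, whereas the paper gets the memberships immediately from the characterization $x^*\in\partial f(y)\iff f^*(x^*)+f(y)=\scal{x^*}{y}$. Your remark that only properness and convexity are needed (lower semicontinuity and reflexivity play no role) is accurate and applies equally to the paper's proof; the one hygiene point worth making explicit in either version is that $f(x)$ and $f(y)$ are finite (which follows from $\partial f(x)\neq\varnothing$, $\partial f(y)\neq\varnothing$, and properness), so the subtractions and cancellations are legitimate.
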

\begin{proof}
By \eqref{equaltozero},
\begin{equation}\label{anotherform}
\scal{x^*}{x}+\scal{y^*}{y}=\scal{x^*}{y}+\scal{y^*}{x}.
\end{equation}
By
\eqref{subinclusion},
$$f^*(x^*)+f(x)=\scal{x^*}{x}, \quad f^*(y^*)+f(y)=\scal{y^*}{y}.$$
Adding them, followed by using \eqref{anotherform}, yields
$$f^*(x^*)+f(y)+f^*(y^*)+f(x)=\scal{x^*}{y}+\scal{y^*}{x},$$
$$[f^*(x^*)+f(y)-\scal{x^*}{y}]+[f^*(y^*)+f(x)-\scal{y^*}{x}]=0.$$
Since each bracketed term is nonnegative, we must have
$f^*(x^*)+f(y)=\scal{x^*}{y}$ and $f^*(y^*)+f(x)=\scal{y^*}{x}$.
It follows that $x^*\in \partial f(y)$ and that $y^*\in \partial f(x)$.
\end{proof}

The following result provides a powerful
criterion for determining whether a given operator is irreducible and
hence Asplund decomposable.

\begin{theorem}\label{acyclicmono}
Let $A:X\To X^*$ be monotone and at most single-valued.
Suppose that there exists a dense subset $D$ of $\dom A$ such that
$$ \scal{Ax-Ay}{x-y}=0 \quad \forall x,y \in D.$$
Then $A$ is irreducible and hence Asplund decomposable.
\end{theorem}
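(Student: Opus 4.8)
The plan is to check the definition of irreducibility directly. I fix an arbitrary representation $A=\partial f+S$ with $f\colon X\to\RX$ proper lower semicontinuous convex and $S\colon X\To X^*$ monotone, and aim to show that $\ran\big(\partial f|_{\dom A}\big)$ is a singleton (the degenerate case $\dom A=\varnothing$ being uninteresting, I assume $\dom A\neq\varnothing$). The first observation is purely structural: $\dom A=\dom\partial f\cap\dom S$, and for $x\in\dom A$ the sets $\partial f(x)$ and $Sx$ are nonempty while their sum $Ax$ is a singleton; since a sum of two nonempty sets can be a singleton only if each of them is, both $\partial f$ and $S$ are single-valued on $\dom A$, in particular on $D$.

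Second, I would use the hypothesis on $D$ to pin $\partial f$ down there. For $x,y\in D$,
\[
0=\scal{Ax-Ay}{x-y}=\scal{\partial f(x)-\partial f(y)}{x-y}+\scal{Sx-Sy}{x-y};
\]
both summands on the right are nonnegative, by monotonicity of $\partial f$ and of $S$, hence both vanish, and in particular $\scal{\partial f(x)-\partial f(y)}{x-y}=0$. Paramonotonicity of $\partial f$ (Fact~\ref{paramono}) then gives $\partial f(x)\in\partial f(y)$, and since $\partial f(y)$ is a singleton this forces $\partial f(x)=\partial f(y)$. Hence $\partial f$ is constant on $D$; write $x_0^*$ for its value.

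Third --- and here lower semicontinuity does the real work --- I would propagate the equality $\partial f\equiv x_0^*$ from the dense set $D$ to all of $\dom A$. Put $g:=f-\scal{\cdot}{x_0^*}$, a proper lower semicontinuous convex function with $\partial g(x)=\partial f(x)-x_0^*$ for every $x\in X$. For each $d\in D$ one has $0\in\partial g(d)$, i.e.\ $d$ minimizes $g$ over $X$, so $g(d)=m:=\inf_X g$, a finite value that does not depend on $d\in D$. Now let $z\in\dom A$ be arbitrary and choose a sequence $(d_n)$ in $D$ with $d_n\to z$, possible since $D$ is dense in $\dom A$. Lower semicontinuity of $g$ yields $g(z)\le\liminf_n g(d_n)=m\le g(z)$, whence $g(z)=m$, i.e.\ $0\in\partial g(z)$, i.e.\ $x_0^*\in\partial f(z)$; by single-valuedness of $\partial f$ on $\dom A$ this means $\partial f(z)=x_0^*$. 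Therefore $\ran\big(\partial f|_{\dom A}\big)=\{x_0^*\}$, so $A$ is irreducible. Asplund decomposability then follows at once: with $\mathbf 0$ denoting the zero function on $X$ (proper, lower semicontinuous, convex), $A=\partial\mathbf 0+A$ is an Asplund decomposition of $A$.

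The potential obstacle is isolated in the third step. Everything up to ``$\partial f\equiv x_0^*$ on $D$'' is a mechanical use of monotonicity together with Fact~\ref{paramono}; but passing from a dense subset of $\dom A$ to all of $\dom A$ cannot be achieved by monotonicity alone. One must recast ``$x_0^*\in\partial f(d)$'' as ``$d$ minimizes $f-\scal{\cdot}{x_0^*}$'' and then use that this minimization property survives limits precisely because $f$ (equivalently $g$) is lower semicontinuous.
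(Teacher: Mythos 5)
Your proof is correct and follows essentially the same route as the paper's: split $\scal{Ax-Ay}{x-y}=0$ into the two nonnegative monotonicity terms, invoke paramonotonicity (Fact~\ref{paramono}) to make $\partial f$ constant on $D$, and then use lower semicontinuity (closedness of the argmin of the tilted function) together with density of $D$ to propagate this to all of $\dom A$. The only cosmetic difference is that the paper first translates a point of $D$ to the origin and normalizes $\partial f(0)=\{0\}$, whereas you subtract the linear functional $\scal{\cdot}{x_0^*}$ directly; these are the same maneuver.
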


\begin{proof}
Let $a\in D$ and $D':=D-\{a\}$.
Define $A':\dom A-\{a\}\rightarrow A(\cdot+a)$.
Then $A$ is irreducible if and only if $A'$ is irreducible.
Now we show $A'$ is irreducible.
By assumptions, $0\in D'$ and
\begin{align*}
\scal{A'x-A'y}{x-y}=0 \quad \forall x,y \in D'.\end{align*}
Let $A'=\partial f+R$, where $f$ is proper lower semicontinuous and
convex, and $R$ is monotone.
 Since $A'$ is single-valued on $\dom A'$, we have that
 $\partial f$ and $R$ are single-valued on $\dom A'$ and that
 $$R=A'-\partial f\quad \text{ on }\dom A'.$$
 By taking $x_{0}^*\in \partial f(0)$,
rewriting $A'=(\partial f-x_{0}^*)+(x_{0}^*+R)$, we can and do
suppose $\partial f(0)=\{0\}$.
 For $x,y\in D'$ we have
 $\langle A'x-A'y,x-y\rangle=0$. Then for $x,y\in D'$
 \begin{equation*}
0\leq\langle R(x)-R(y),x-y\rangle=\langle A'x-A'y, x-y\rangle-
\langle \partial f(x)-\partial f(y),x-y\rangle
=-\langle \partial f(x)-\partial f(y),x-y\rangle.
\end{equation*}
On the other hand, $\partial f$ is monotone,
thus,
\begin{align} \langle \partial f(x)-\partial f(y),x-y\rangle=0,
\quad \forall x,y\in D'.\label{Ac:1}\end{align} Using $\partial
f(0)=\{0\}$,
\begin{align}\langle\partial f(x)-0,x-0\rangle=0,\quad \forall x\in D'.\label{Ac:2}\end{align}
As $\partial f$ is paramonotone by Fact~\ref{paramono}, $\partial
f(x)=\{0\}$ so that $x\in \argmin f$. This implies that
$D'\subseteq \argmin f$ since $x\in D'$ was chosen arbitrarily.
As $f$ is lower semicontinuous, $\argmin f$ is closed.
Using that $D'$ is dense in
$\dom A'$, it follows that $\dom A'\subseteq\overline {D'}\subseteq\argmin
f$. Since $\partial f$ is single-valued on $\dom A'$, $\partial
f(x)=\{0\},\ \forall x\in \dom A'$. Hence $A'$ is irreducible,
and so is $A$.
\end{proof}

\begin{remark}
In {Theorem}~\ref{acyclicmono},
the assumption that $A$ be at most single-valued
is important: indeed, let $L$ be a proper subspace of $\RR^{n}$.
Then $\partial \iota_{L}$ is a linear
relation and skew, yet $\partial\iota_{L} = \partial\iota_{L}+0$
is not irreducible.
\end{remark}

Theorem~\ref{acyclicmono} and the definitions of the two decomposabilities
now yield the following.

\begin{corollary} Let $A:X\To X^*$ be maximal monotone such that
$A$ is Borwein-Wiersma decomposable.
Then $A$ is Asplund decomposable.
\end{corollary}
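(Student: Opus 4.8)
The plan is to re-use the very decomposition that witnesses Borwein--Wiersma decomposability and to upgrade its skew summand to an irreducible one via Theorem~\ref{acyclicmono}. Concretely, suppose $A=\partial f+S$ is a Borwein--Wiersma decomposition, so that $f\colon X\to\RX$ is proper lower semicontinuous and convex and $S\colon X\To X^*$ is skew and at most single-valued. First I would record that $S$ is monotone and, more precisely, that $\scal{Sx-Sy}{x-y}=0$ for all $x,y\in\dom S$: since $\gra S$ is a linear subspace, the pair $(x-y,Sx-Sy)$ lies in $\gra S$, and then skewness of $S$ forces $\scal{Sx-Sy}{x-y}=0$. In particular $S$ is monotone, and the hypothesis of Theorem~\ref{acyclicmono} is met with the (trivially dense) subset $D:=\dom S$ of $\dom S$. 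Note also that $\dom S\supseteq\dom A\neq\varnothing$ because $A$ is maximal monotone, so nothing degenerate occurs here.

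Applying Theorem~\ref{acyclicmono} to $S$ then yields that $S$ is irreducible. Consequently $A=\partial f+S$ is, by definition, an Asplund decomposition of $A$, and therefore $A$ is Asplund decomposable, which is precisely the assertion.

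I expect essentially no obstacle: the substantive work has already been carried out in Theorem~\ref{acyclicmono}, and the only remaining point is the routine observation that a skew linear relation automatically satisfies $\scal{Sx-Sy}{x-y}=0$ on its domain and is therefore monotone, so that the ``$\scal{Ax-Ay}{x-y}=0$ on a dense set'' hypothesis of that theorem holds with $D=\dom S$. One should merely take care to use the single-valuedness of $S$, which is part of the definition of a Borwein--Wiersma decomposition, since (as the remark following Theorem~\ref{acyclicmono} shows) skew linear relations need not be single-valued and need not be irreducible without it.
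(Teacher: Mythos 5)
Your proof is correct and follows exactly the route the paper intends: the paper's own justification is simply ``Theorem~\ref{acyclicmono} and the definitions of the two decomposabilities,'' i.e., the skew, at most single-valued summand $S$ satisfies $\scal{Sx-Sy}{x-y}=0$ on all of $\dom S$, hence is irreducible by Theorem~\ref{acyclicmono}, so the given Borwein--Wiersma decomposition is already an Asplund decomposition. Your additional care about nonemptiness of $\dom S$ and the role of single-valuedness is appropriate but does not change the argument.
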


We proceed to give a few sufficient conditions
for a maximal monotone linear relation to be
Borwein-Wiersma decomposable.
The following simple observation will be needed.

\begin{lemma}\label{amy}
Let $A:X\To X^*$ be a monotone linear relation such that
$A$ is Borwein-Wiersma decomposable, say
$A=\partial f +S$,
where $f\colon X\to\RX$ is proper, lower semicontinuous, and convex,
and where $S\colon X\To X^*$ is at most single-valued and skew.
Then the following hold.
\begin{enumerate}
\item\label{domlinear}
$\partial f + \II_{\dom A}\colon x\mapsto \begin{cases}
\partial f(x), &\text{if $x\in\dom A$;}\\
\varnothing, &\text{otherwise}\end{cases}$ ~~~is a monotone linear relation.
\item\label{domainpf}
$\dom A\subseteq \dom \partial f \subseteq \dom f \subseteq (A0)^{\perp}$.
\item\label{domainpf+}
If $A$ is maximal monotone, then
$\dom A\subseteq \dom \partial f \subseteq \dom f \subseteq \overline{\dom
A}$.
\item \label{domainpf++}
If $A$ is maximal monotone and $\dom A$ is closed, then
$\dom \partial f=\dom A=\dom f$.
\end{enumerate}
\end{lemma}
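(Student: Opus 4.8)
The plan is to establish the four claims in sequence, extracting each later one from the preceding work plus the maximality/closedness hypotheses.

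For \ref{domlinear}: Since $A$ is a monotone linear relation and $A = \partial f + S$ with $S$ at most single-valued and skew, restrict attention to $\dom A$. For $x,y \in \dom A$ and $x^*,y^* \in \partial f(x)$, $\partial f(y)$ respectively, the fact that $S$ is single-valued on $\dom A$ (indeed $\dom A \subseteq \dom S$ since $A = \partial f + S$) lets me write, using Proposition~\ref{MPP} applied to $S$, that $\scal{x^*-y^*}{x-y} = \scal{(x^*+Sx)-(y^*+Sy)}{x-y} - \scal{Sx-Sy}{x-y}$. The first term is $\geq 0$ by monotonicity of $A$; I need the second term to vanish. Here I use that for $u,v \in \dom S$, $\scal{Su-Sv}{u-v} = \scal{Su}{u} - \scal{Su}{v} - \scal{Sv}{u} + \scal{Sv}{v} = 0$ by skewness and Proposition~\ref{MPP}. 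Hence $\partial f + \II_{\dom A}$ is monotone. Linearity: $\gra(\partial f + \II_{\dom A})$ equals $\gra A - \gra(S|_{\dom A})$ (pointwise on $\dom A$), and since $\gra A$ is a subspace and $S$ restricted to the subspace $\dom A$ is a single-valued linear map with graph a subspace, the difference is a subspace; more carefully, $\partial f + \II_{\dom A} = A - S$ on $\dom A$ as relations, and Fact~\ref{Rea:1}\ref{Th:30} handles the scalar-multiple/sum structure. The displayed piecewise formula is then immediate since $\partial f(x) + \{Sx\} = Ax$ forces $\partial f(x)$ to be exactly the translate $Ax - Sx$, nonempty precisely on $\dom A$.

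For \ref{domainpf}: The inclusions $\dom A \subseteq \dom\partial f \subseteq \dom f$ are definitional (the first because $A = \partial f + S$ needs $\partial f(x)$ nonempty wherever $Ax$ is, using $\dom A \subseteq \dom S$). For $\dom f \subseteq (A0)^\perp$: by Lemma~\ref{amy}\ref{domlinear} just proved, $\partial f + \II_{\dom A}$ is a monotone linear relation, so by Fact~\ref{linear}\ref{Nov:s1} its domain lies in $((\partial f + \II_{\dom A})0)^\perp$; and $(\partial f + \II_{\dom A})0 = A0 - S0 = A0$ since $S0 = \{0\}$ (a single-valued linear relation sends $0$ to $0$; skewness is not even needed). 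Combined with $\dom A \subseteq (A0)^\perp$ from Fact~\ref{linear}\ref{Nov:s1}, this pins $\dom f$ inside $(A0)^\perp$ — but I should be careful that $\dom f$ can be larger than $\dom\partial f = \dom(\partial f + \II_{\dom A})$. The point is that $\dom f \subseteq (A0)^\perp$ follows because $(A0)^\perp$ is a closed subspace and, by the subgradient inequality, any point of $\dom f$ is a limit of points (via convexity of $\dom f$ and $\dom\partial f$ dense in $\dom f$ — standard for lsc convex $f$ on reflexive space) lying in $(A0)^\perp$; alternatively, argue directly that $f$ is constant on cosets of $A0$ using that $A0 \subseteq (\dom A)^\perp$ and $\partial f$ annihilates translations by $A0$ along $\dom A$, so $f + \iota_{(A0)^\perp}$ has the same subdifferential structure and $\dom f$ may be assumed $\subseteq (A0)^\perp$.

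For \ref{domainpf+}: When $A$ is maximal monotone, Fact~\ref{linear}\ref{sia:3iv} gives $(A0)^\perp = \overline{\dom A}$, so \ref{domainpf} immediately yields $\dom f \subseteq \overline{\dom A}$, and the chain of \ref{domainpf} is inherited. For \ref{domainpf++}: if additionally $\dom A$ is closed, then $\overline{\dom A} = \dom A$, so \ref{domainpf+} collapses the chain $\dom A \subseteq \dom\partial f \subseteq \dom f \subseteq \overline{\dom A} = \dom A$ to equality throughout.

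The main obstacle I anticipate is the final inclusion $\dom f \subseteq (A0)^\perp$ in \ref{domainpf}: passing from control of $\dom\partial f$ (where monotone-linear-relation machinery applies cleanly) to control of all of $\dom f$ requires either invoking that $\dom\partial f$ is dense in $\dom f$ and that $(A0)^\perp$ is closed, or else reducing to the case $\dom f \subseteq (A0)^\perp$ by replacing $f$ with $f + \iota_{(A0)^\perp}$ and checking this does not disturb the decomposition $A = \partial f + S$ on $\dom A$. I would take the reduction route, since $A0 \subseteq (\dom A)^\perp$ ensures adding $\iota_{(A0)^\perp}$ changes $\partial f$ only by normal-cone directions orthogonal to $\dom A$, which are absorbed without affecting $A = \partial f + S$ on $\dom A$.
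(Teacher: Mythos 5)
Parts \ref{domlinear}, \ref{domainpf+} and \ref{domainpf++} of your proposal are fine and essentially match the paper (\ref{domlinear} is just ``$\partial f=A-S$ on $\dom A$ is a difference of two linear relations''; the last two follow from \ref{domainpf} via Fact~\ref{linear}\ref{sia:3iv}). The genuine gap is the inclusion $\dom f\subseteq (A0)^\perp$ in \ref{domainpf}, which you correctly single out as the obstacle but do not actually overcome. Your density route needs $\dom\partial f\subseteq (A0)^\perp$ before you can pass to closures, but the linear-relation machinery you invoke (Fact~\ref{linear}\ref{Nov:s1} applied to $\partial f+\II_{\dom A}$) only controls $\dom A$, and $\dom\partial f$ may be strictly larger than $\dom A$; you never bridge that. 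Your reduction route is worse: replacing $f$ by $f+\iota_{(A0)^\perp}$ changes the function, so at best it shows that \emph{some} decomposition has the stated property, not that the given $f$ does --- and the lemma is later applied to the given $f_i$ of an arbitrary decomposition (e.g.\ in Proposition~\ref{Co:r2}), so the statement must hold for the original $f$. (Also, ``$f$ is constant on cosets of $A0$'' does not parse in general: $A0\subseteq X^*$ while $\dom f\subseteq X$.)

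The fix is a short direct argument for which you already had all the ingredients. Since $0\in\dom A\subseteq\dom S$ and $S$ is a single-valued linear relation, $S0=0$, hence $\partial f(0)=A0$. The subgradient inequality at $0$ then gives, for every $x^*\in A0$ and every $x\in X$ (not merely $x\in\dom A$), $\scal{x^*}{x}\leq f(x)-f(0)$, i.e.\ $\sigma_{A0}(x)\leq f(x)-f(0)$, where $\sigma_{A0}$ is the support function of $A0$. Because $A0$ is a linear subspace, $\sigma_{A0}(x)=+\infty$ whenever $x\notin(A0)^\perp$, so $f(x)=+\infty$ there; that is exactly $\dom f\subseteq (A0)^\perp$. (Alternatively, your density route can be repaired: for $x\in\dom\partial f$, $x^*\in\partial f(x)$ and $z^*\in A0=\partial f(0)$, monotonicity of $\partial f$ against the pairs $(0,tz^*)$ for all $t\in\RR$ forces $\scal{z^*}{x}=0$, whence $\dom\partial f\subseteq (A0)^\perp$, and then Br{\o}ndsted--Rockafellar density together with closedness of $(A0)^\perp$ finishes; but the support-function argument is shorter.)
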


\begin{proof}
\ref{domlinear}:
Indeed, on $\dom A$, we see that
$\partial f=A-S$ is the difference of two linear relations.

\ref{domainpf}:
Clearly $\dom A\subseteq \dom \partial f$.
As $S0=0$, we have $A0=\partial f(0)$.
Thus,
$\forall x^*\in A0$, $x\in X$,
$$\scal{x^*}{x}\leq f(x)-f(0).$$
Then $\sigma_{A0}(x)\leq f(x)-f(0)$, where $\sigma_{A0}$ is the  support function of $A0$.
If $x\not\in (A0)^{\perp},$ then $\sigma_{A0}(x)=+\infty$
since $A0$ is a linear subspace,
 so $f(x)=+\infty,\ \forall x\not\in (A0)^{\perp}$.
 Therefore, $\dom f\subseteq (A0)^{\perp}$.
 Altogether, \ref{domainpf} holds.

\ref{domainpf+}: Combine \ref{domainpf} with
{Fact}~\ref{linear}\ref{sia:3iv}.
\ref{domainpf++}: This is clear from \ref{domainpf+}.
\end{proof}

\begin{fact}\emph{(See \cite[Proposition~3.3]{Yao}.)} \label{f:PheSim}
Let $A:X\To X^*$ be a monotone linear relation
such that  $A$ is symmetric.
Then the following hold.
\begin{enumerate}
\item \label{f:PheSim:lsc02} $q_{A}$ is convex and $\overline{q_A}+\iota_{\dom A}=q_A$.
\item \label{f:PheSim:quad}
$\gra A \subseteq\gra \partial \overline{q_A}$.
\item
\label{f:PheSim:quad+}
If $A$ is maximal monotone, then $A=\partial \overline{q_A}$.
\end{enumerate}
\end{fact}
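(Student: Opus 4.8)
The plan is to derive all three items from a single computation --- a global subgradient inequality for $q_A$ --- and then read off the rest as facts about lower semicontinuous hulls and maximal monotonicity. First I would record that $q_A$ is genuinely single-valued: for $x\in\dom A$ and any $x^*\in Ax$, Fact~\ref{Rea:1}\ref{Th:28} gives $Ax=x^*+A0$ while Fact~\ref{linear}\ref{Nov:s1} gives $x\in(A0)^\perp$, so $\scal{x}{Ax}=\scal{x}{x^*}\in\RR$ does not depend on the choice of $x^*$; thus $q_A\colon X\to\RX$ is well defined, and it is proper since $0\in\dom A$ with $q_A(0)=0$. The convexity half of \ref{f:PheSim:lsc02} is then immediate, because $\dom q_A=\dom A$ is a linear subspace and the restriction of $q_A$ to it is half of the convex function $y\mapsto\scal{y}{Ay}$ of Fact~\ref{linear}\ref{Nov:s2}.

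Next I would establish the key inequality
\begin{equation}\label{e:pp:key}
(\forall\,(x,x^*)\in\gra A)\,(\forall\,y\in X)\qquad q_A(y)\;\ge\;q_A(x)+\scal{y-x}{x^*}.
\end{equation}
If $y\notin\dom A$ the left side is $+\infty$ while the right side is finite, so \eqref{e:pp:key} holds trivially; if $y\in\dom A$, pick $y^*\in Ay$, so that $q_A(y)=\thalb\scal{y}{y^*}$ and $q_A(x)=\thalb\scal{x}{x^*}$, and compute
\[
q_A(y)-q_A(x)-\scal{y-x}{x^*}=\thalb\scal{y-x}{y^*-x^*}\ge 0,
\]
where the equality follows by expanding and using symmetry of $A$ (so that the cross terms satisfy $\scal{y}{x^*}=\scal{x}{y^*}$) and the inequality is monotonicity of $A$. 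This is the only place the symmetry hypothesis enters.

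From \eqref{e:pp:key}, every $(x,x^*)\in\gra A$ produces a continuous affine minorant $\ell_{x,x^*}\colon y\mapsto q_A(x)+\scal{y-x}{x^*}$ of $q_A$ that touches $q_A$ at $x$. Since such an $\ell_{x,x^*}$ is lower semicontinuous, it is dominated by the lsc hull $\overline{q_A}$, so for each $x\in\dom A$ (choosing some $x^*\in Ax$) we get $q_A(x)=\ell_{x,x^*}(x)\le\overline{q_A}(x)\le q_A(x)$; hence $\overline{q_A}$ coincides with $q_A$ on $\dom A$ and both equal $+\infty$ off $\dom A$, i.e.\ $\overline{q_A}+\iota_{\dom A}=q_A$, which completes \ref{f:PheSim:lsc02}. (En route, $\overline{q_A}$ is proper, lsc and convex, being the lsc hull of the proper convex $q_A$ and bounded below by $\ell_{0,x_0^*}$ for any $x_0^*\in A0$.) For \ref{f:PheSim:quad}, fix $(x,x^*)\in\gra A$; then for all $y\in X$ we have $\overline{q_A}(y)\ge\ell_{x,x^*}(y)=q_A(x)+\scal{y-x}{x^*}=\overline{q_A}(x)+\scal{y-x}{x^*}$, i.e.\ $x^*\in\partial\overline{q_A}(x)$, whence $\gra A\subseteq\gra\partial\overline{q_A}$.

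Finally, \ref{f:PheSim:quad+} is a short maximality argument: $\partial\overline{q_A}$ is monotone as the subdifferential of a proper convex function, and it contains $\gra A$ by \ref{f:PheSim:quad}; since $A$ is maximal monotone, the inclusion $\gra A\subseteq\gra\partial\overline{q_A}$ is forced to be an equality, i.e.\ $A=\partial\overline{q_A}$. I expect \eqref{e:pp:key} to be the only real obstacle: it is short, but it is exactly the data needed both to pin down $\overline{q_A}$ on $\dom A$ and to manufacture the subgradients, and keeping the symmetric bookkeeping of its cross term straight is precisely where the hypothesis is spent; everything afterwards is standard.
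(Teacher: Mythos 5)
Your proof is correct. The paper itself gives no argument for this Fact---it is quoted from \cite[Proposition~3.3]{Yao}---but your derivation is the standard one underlying that reference: the gradient-type inequality $q_A(y)\ge q_A(x)+\scal{y-x}{x^*}$, obtained by writing the difference as $\thalb\scal{y-x}{y^*-x^*}$ via symmetry and invoking monotonicity, and then reading off \ref{f:PheSim:lsc02}, \ref{f:PheSim:quad} from the resulting affine minorants and \ref{f:PheSim:quad+} from maximality. All steps check out, including the well-definedness of $q_A$ via Fact~\ref{Rea:1}\ref{Th:28} and Fact~\ref{linear}\ref{Nov:s1}.
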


\begin{theorem}\label{theo:4}
Let $A\colon X \To X^*$ be a maximal monotone linear relation
such that $\dom A\subseteq\dom A^*$.
Then $A$ is Borwein-Wiersma decomposable
via
$$A=\partial\overline{q_A}+S,$$
where $S$ is an arbitrary linear single-valued
selection of $A_{\mathlarger{\circ}}$.
Moreover, $\partial\overline{q_A} =A_+$ on $\dom A$.
\end{theorem}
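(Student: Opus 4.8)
The plan is to build the decomposition directly from the symmetric part $A_+$ via Fact~\ref{f:PheSim}, and to handle the skew part by a selection argument. First I would record the standing structure: since $A$ is maximal monotone with $\dom A\subseteq\dom A^*$, Fact~\ref{linear}\ref{sia:3vii} gives $A=A_++A_{\mathlarger{\circ}}$ on $\dom A$, with $\dom A_+=\dom A_{\mathlarger{\circ}}=\dom A$. Next I would argue that $A_+$ is a \emph{maximal} monotone linear relation. Monotonicity of $A_+$ is immediate (it is symmetric, and for a symmetric monotone linear relation $\scal{x}{A_+x}=q_{A_+}$-type reasoning via Fact~\ref{linear}\ref{Nov:s2} on $A$ itself shows $\scal{x}{A_+x}=\scal{x}{Ax}\ge 0$); maximality would follow by showing $(\dom A_+)^\perp\subseteq A_+0$ is not quite enough in general, so instead I would invoke that $\gra A_+$ is closed (because $\gra A$ is closed and $\gra A^*$ is closed, and on the common domain $A_+$ is a ``half-sum'' — here one needs a small argument, possibly using Fact~\ref{linear}\ref{sia:3iv} to pin down $A_+0=A0$) together with Fact~\ref{Sv:7}: since $A_+$ is symmetric and monotone with closed graph, $A_+^*=A_+\supseteq A_+$ is monotone, so $A_+$ is maximal monotone. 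This is the step I expect to be the main obstacle — verifying that $A_+$ genuinely is maximal monotone (closedness of $\gra A_+$ and the passage through Brézis--Browder) rather than merely monotone, since the half-sum of two closed relations need not a priori be closed without using the domain inclusion.

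With $A_+$ maximal monotone and symmetric in hand, Fact~\ref{f:PheSim}\ref{f:PheSim:quad+} yields $A_+=\partial\overline{q_{A_+}}$. I would then identify $q_{A_+}$ with $q_A$: for $x\in\dom A=\dom A_+$ we have $q_{A_+}(x)=\thalb\scal{x}{A_+x}=\thalb\scal{x}{Ax}=q_A(x)$ using Fact~\ref{Rea:1}\ref{Sia:2b} (so that $\scal{x}{A^*x}=\scal{x}{Ax}$ and hence $\scal{x}{A_+x}=\scal{x}{Ax}$ as a genuine equality of reals), and both functions are $+\infty$ off $\dom A$; therefore $q_{A_+}=q_A$ and so $\overline{q_{A_+}}=\overline{q_A}$, giving $A_+=\partial\overline{q_A}$ and in particular $\partial\overline{q_A}=A_+$ on $\dom A$, which is the ``Moreover'' claim.

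Finally, I would assemble the decomposition. Let $S$ be any linear single-valued selection of $A_{\mathlarger{\circ}}$ (one exists: pick a linear complement of $A_{\mathlarger{\circ}}0=A0$ inside $\gra A_{\mathlarger{\circ}}$ — in infinite dimensions this uses a Hamel-basis/Zorn argument, which is acceptable since the statement only asserts existence of \emph{some} such $S$). Then $S$ is skew by construction, at most single-valued, and $\overline{q_A}$ is proper lower semicontinuous convex (properness because $q_A$ is proper and convex by Fact~\ref{linear}\ref{Nov:s2}, so its lsc hull is proper). It remains to check $A=\partial\overline{q_A}+S$ as set-valued maps. On $\dom A$: $\partial\overline{q_A}(x)+S(x)=A_+x+Sx$, and since $Sx\in A_{\mathlarger{\circ}}x$ and $A_{\mathlarger{\circ}}x=Sx+A_{\mathlarger{\circ}}0=Sx+A0$ by Fact~\ref{Rea:1}\ref{Th:28}, we get $A_+x+Sx+A0=A_+x+A_{\mathlarger{\circ}}x=Ax$ using $A=A_++A_{\mathlarger{\circ}}$ and $A_+x=A_+x+A0$ (as $A_+0=A0$); conversely $\partial\overline{q_A}(x)$ could a priori be larger than $A_+x$, but $\partial\overline{q_A}=A_+$ on $\dom A=\dom\partial\overline{q_A}\cap\dom A$ from the Moreover part, and off $\dom A$ both sides are empty (the left side because $S$ has domain $\dom A_{\mathlarger{\circ}}=\dom A$, the right side because $\dom A$ is the domain of $A$). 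Hence the graphs coincide and $A=\partial\overline{q_A}+S$ is a Borwein--Wiersma decomposition. $\hfill\blacksquare$
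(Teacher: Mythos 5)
There is a genuine gap: your argument pivots on establishing that $A_+$ is \emph{maximal} monotone so that Fact~\ref{f:PheSim}\ref{f:PheSim:quad+} applies, but under the stated hypotheses $A_+$ need not be maximal monotone, and no amount of work on closedness of $\gra A_+$ will rescue this. A concrete counterexample lives inside the paper: for the skew operator $S$ of Example~\ref{FE:1} one has $\dom S\subseteq\dom S^*$ and $S^*=-S$ on $\dom S$, so $S_+$ is the zero operator restricted to the dense \emph{proper} subspace $\dom S$; its graph $\dom S\times\{0\}$ is properly contained in the graph of the (monotone) zero operator on all of $X$, so $S_+$ is not maximal monotone (its graph is not even closed). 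Consequently the step ``$A_+=\partial\overline{q_{A_+}}$'' and the downstream identification $\partial\overline{q_A}=A_+$ on $\dom A$, which you use to show the two graphs coincide, are not justified by your route.

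The fix — and this is what the paper does — is to avoid maximality of $A_+$ entirely. Fact~\ref{f:PheSim}\ref{f:PheSim:quad} requires only that $A_+$ be a symmetric \emph{monotone} linear relation (monotonicity of $A^*$, hence of $A_+$, comes from Br\'ezis--Browder) and already gives the inclusion $\gra A_+\subseteq\gra\partial\overline{q_{A_+}}=\gra\partial\overline{q_A}$. Combined with $A=A_++S$ (your computation of this identity via Fact~\ref{Rea:1}\ref{Th:28} is fine), this yields $\gra A\subseteq\gra\bigl(\partial\overline{q_A}+S\bigr)$, and the right-hand side is a monotone operator since $S$ is skew; maximal monotonicity of $A$ itself then forces equality. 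The ``Moreover'' statement is obtained afterwards by subtracting the single-valued $S$ on $\dom A$, i.e.\ $\partial\overline{q_A}=A-S=A_+$ there, rather than being an input to the proof. Everything else in your proposal (the Zorn selection, skewness of $S$, $q_{A_+}=q_A$ via Fact~\ref{Rea:1}\ref{Sia:2b}, properness of $\overline{q_A}$ from $q_A\ge 0$) is correct and matches the paper.
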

\begin{proof} From Fact~\ref{Sv:7}, $A^*$ is monotone,
 so $A_{+}$ is monotone. By Fact~\ref{Rea:1}\ref{Sia:2b},
$q_{A_{+}}=q_{A}$, using {Fact}~\ref{f:PheSim}\ref{f:PheSim:quad},
 $\gra A_+\subseteq\gra \partial\overline{q_{A_+}}=\gra \partial\overline{q_A}$.
Let $S:\dom A\rightarrow X^*$
 be a linear selection of $A_{\mathlarger{\circ}}$
 (the existence of which is guaranteed by a standard Zorn's lemma
 argument).
  By Fact~\ref{Rea:1}\ref{Sia:2b}, $S$ is skew.
Then, by {Fact}~\ref{linear}\ref{sia:3vii}, we have
$A=A_{+}+S\subseteq\partial
\overline{q_A}+S.$ Since $A$ is maximal monotone, $A=\partial
\overline{q_A}+S$, which is the announced
Borwein-Wiersma decomposition. Moreover, on $\dom A$, we have
$\partial \overline{q_A}= A-S=A_+$.
\end{proof}

\begin{corollary}\label{Sk:2}
Let $A:X\To X^*$ be a maximal monotone linear relation such
that $A$ is symmetric.
Then $A$ and $A^{-1}$ are Borwein-Wiersma decomposable,
with decompositions
$A=\partial\overline{q_{A}} + 0$ and $A^{-1}=\partial q^*_{A} + 0$,
respectively.
\end{corollary}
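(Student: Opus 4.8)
The plan is to apply Theorem~\ref{theo:4} with $A$ symmetric, and then to handle $A^{-1}$ by passing to adjoints and inverses. First I would observe that if $A$ is symmetric and maximal monotone, then Proposition~\ref{symm=adj} gives $A=A^*$, so in particular $\dom A=\dom A^*$ and the hypothesis of Theorem~\ref{theo:4} is satisfied. Moreover $A_{\mathlarger{\circ}}=\thalb A-\thalb A^*=0$, so the only linear single-valued selection of $A_{\mathlarger{\circ}}$ is $S=0$. Theorem~\ref{theo:4} therefore yields $A=\partial\overline{q_A}+0$, which is the claimed Borwein-Wiersma decomposition of $A$ (and indeed this is just Fact~\ref{f:PheSim}\ref{f:PheSim:quad+} in disguise).

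For $A^{-1}$, the plan is to check that $A^{-1}$ is again a symmetric maximal monotone linear relation, and then to identify $\overline{q_{A^{-1}}}$ with $q_A^*$. That $A^{-1}$ is a linear relation is immediate since inversion just swaps coordinates; $A^{-1}$ is maximal monotone because $\scal{x^*-y^*}{x-y}$ is symmetric in the two arguments, so monotonicity and maximality are preserved under inversion. Symmetry of $A^{-1}$ follows from Fact~\ref{Rea:1}\ref{Th:29}: $(A^{-1})^*=(A^*)^{-1}=A^{-1}$ since $A=A^*$, and then Proposition~\ref{symm=adj} (applied to $A^{-1}$) gives symmetry. Hence the first part applies to $A^{-1}$ and gives $A^{-1}=\partial\overline{q_{A^{-1}}}+0$.

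It remains to show $\overline{q_{A^{-1}}}=q_A^*$, so that the decomposition reads $A^{-1}=\partial q_A^*+0$ as stated. The key identity is that for a symmetric maximal monotone linear relation one has $A=\partial\overline{q_A}$ (Fact~\ref{f:PheSim}\ref{f:PheSim:quad+}), hence $A^{-1}=(\partial\overline{q_A})^{-1}=\partial(\overline{q_A})^*=\partial q_A^*$, using the standard fact that inverting a subdifferential corresponds to conjugation and that $(\overline{q_A})^*=q_A^*$ since conjugation is insensitive to taking the lower semicontinuous hull. Comparing with $A^{-1}=\partial\overline{q_{A^{-1}}}$ and noting that for a proper lsc convex function the subdifferential determines the function up to an additive constant on each component, one obtains $q_A^*=\overline{q_{A^{-1}}}$ up to a constant; since both are $0$ at points where the respective quadratic vanishes, the constant is $0$.

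The main obstacle I anticipate is the bookkeeping around lower semicontinuous hulls and the precise sense in which $\partial q_A^* = \overline{q_{A^{-1}}}$-gradient data pins down the function: one must be a little careful that $q_A$ need not be lsc (it equals $\overline{q_A}+\iota_{\dom A}$ by Fact~\ref{f:PheSim}\ref{f:PheSim:lsc02}), so the clean statement really is in terms of $\overline{q_A}$, and the conjugate $q_A^*$ is automatically lsc. A cleaner route, which I would actually write, is to avoid the ``up to a constant'' argument entirely: use $A=\partial\overline{q_A}$ directly, invert to get $A^{-1}=\partial(\overline{q_A})^*=\partial q_A^*$, and simply \emph{define} the decomposition of $A^{-1}$ to be $\partial q_A^*+0$, verifying that $q_A^*$ is proper lsc convex (it is a Fenchel conjugate of a proper function, hence lsc convex; properness follows since $\overline{q_A}$ is proper and $\geq$ some affine function by maximal monotonicity, equivalently $q_A^*$ is proper because $q_A^{**}=\overline{q_A}\not\equiv+\infty$) and that $0$ is trivially skew and single-valued. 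This sidesteps uniqueness issues altogether.
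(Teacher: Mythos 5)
Your proof is correct and follows essentially the same route as the paper's: Proposition~\ref{symm=adj} and Fact~\ref{Rea:1}\ref{Th:29} to obtain $A=A^*$ and $(A^{-1})^*=(A^*)^{-1}=A^{-1}$, then Theorem~\ref{theo:4} (equivalently Fact~\ref{f:PheSim}\ref{f:PheSim:quad+}) together with the identity $(\partial \overline{q_A})^{-1}=\partial q_A^*$. One small inaccuracy: $S=0$ is not the \emph{only} linear single-valued selection of $A_{\mathlarger{\circ}}$ when $A0\neq\{0\}$ (the paper's remark following Theorem~\ref{uniquepart} gives a counterexample), but since you only need that $0$ \emph{is} a valid selection, this does not affect the argument.
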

\begin{proof}
Using Proposition~\ref{symm=adj} and Fact~\ref{Rea:1}\ref{Th:29}, we obtain
$A=A^*$ and $A^{-1}=(A^*)^{-1}=(A^{-1})^{*}$.
Hence, Theorem~\ref{theo:4} applies; in fact,
$A=\partial\overline{q_{A}}$ and
$A^{-1}=\partial\overline{q_{A^{-1}}}=\partial q^*_{A}$.
\end{proof}

\begin{corollary}\label{cbr:1}
Let $A\colon X \To X^*$
be a maximal monotone linear relation such that $\dom A$ is closed,
and let $S$ be a single-valued linear selection of
$A_{\mathlarger{\circ}}$.
Then $q_A = \overline{q_A}$, $A_+ = \partial q_A$ is maximal monotone, and
$A$ and $A^*$ are Borwein-Wiersma decomposable, with decompositions
$A = A_+ + S$ and $A^*= A_+ - S$, respectively.
\end{corollary}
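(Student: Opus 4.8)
The plan is to deduce everything from the machinery already in place, chiefly Fact~\ref{linear} and Theorem~\ref{theo:4}. The first step is to observe that $\dom A$ closed puts us squarely in the hypotheses of Fact~\ref{linear}\ref{sia:3vi}, so $\dom A^* = \dom A$; in particular $\dom A \subseteq \dom A^*$, which is exactly the hypothesis of Theorem~\ref{theo:4}. Hence a single-valued linear selection $S$ of $A_{\mathlarger{\circ}}$ exists (Zorn's lemma, as in the proof of Theorem~\ref{theo:4}), $S$ is skew by Fact~\ref{Rea:1}\ref{Sia:2b}, and Theorem~\ref{theo:4} immediately gives the Borwein-Wiersma decomposition $A = \partial\overline{q_A} + S$ together with the identity $\partial\overline{q_A} = A_+$ on $\dom A$.

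The next step is to upgrade $\overline{q_A}$ to $q_A$. Since $A$ is monotone with $\dom A\subseteq \dom A^*$, Fact~\ref{linear}\ref{sia:3vii} gives $A = A_+ + A_{\mathlarger{\circ}}$ with $A_+$ symmetric and monotone, and $q_{A_+}=q_A$ by Fact~\ref{Rea:1}\ref{Sia:2b}. Now I would argue that $q_A$ is already lower semicontinuous: by Fact~\ref{f:PheSim}\ref{f:PheSim:lsc02}, $q_{A_+}$ is convex and $\overline{q_{A_+}} + \iota_{\dom A_+} = q_{A_+}$; since $\dom A_+ = \dom A$ is closed, $\iota_{\dom A}$ is lower semicontinuous, and the sum $\overline{q_A} + \iota_{\dom A}$ of two lower semicontinuous functions is lower semicontinuous and equals $q_A$. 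Thus $q_A = \overline{q_A}$, and the displayed identity $\partial\overline{q_A}=A_+$ on $\dom A$ becomes $\partial q_A = A_+$; since $\partial q_A$ is a subdifferential it is maximal monotone, and as it agrees with $A_+$ on $\dom A = \dom A_+ = \dom\partial q_A$ (using Lemma~\ref{amy}\ref{domainpf++}), we get $A_+ = \partial q_A$ is maximal monotone. Alternatively one may invoke Fact~\ref{f:PheSim}\ref{f:PheSim:quad+} once $A_+$ is known to be maximal monotone, which follows from Fact~\ref{linear}\ref{sia:3v} applied to $A_+$ (its domain $\dom A$ is closed and $(\dom A_+)^\bot = (\dom A)^\bot = A0 = A_+0$ by Fact~\ref{linear}\ref{sia:3iv}).

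It remains to record the decomposition of $A^*$. Since $\dom A$ is closed, Fact~\ref{Rea:1}\ref{Th:32} gives $\dom A^*$ closed, and Fact~\ref{linear}\ref{sia:3vi} gives $\dom A^* = \dom A$, so $A^*$ also satisfies the hypothesis $\dom A^* \subseteq \dom(A^*)^* = \dom A^{**}$; here one uses that $\gra A$ is closed (it is a closed linear relation, being maximal monotone, or one reduces to $\gra A$ closed via Fact~\ref{Rea:1}\ref{Th:31}) so that $A^{**}=A$. Then $(A^*)_+ = \thalb A^* + \thalb A^{**} = \thalb A^* + \thalb A = A_+$ and $(A^*)_{\mathlarger{\circ}} = \thalb A^* - \thalb A = -A_{\mathlarger{\circ}}$, so $-S$ is a single-valued linear selection of $(A^*)_{\mathlarger{\circ}}$, and applying the already-established part of the corollary (or Theorem~\ref{theo:4} directly) to $A^*$ yields $A^* = \partial q_A + (-S) = A_+ - S$. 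One should also double-check consistency: $A_+ + S = A = A_+ + A_{\mathlarger{\circ}}$ forces $S$ to be a selection of $A_{\mathlarger{\circ}}$, and symmetrically $A_+ - S = A^*$ is consistent with $A^* = A_+ - A_{\mathlarger{\circ}}$ from Fact~\ref{linear}\ref{sia:3vii}.

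I expect the only genuinely delicate point to be the lower semicontinuity of $q_A$, i.e.\ the passage from $\overline{q_A}$ to $q_A$; it hinges on combining the splitting $\overline{q_A} + \iota_{\dom A} = q_A$ from Fact~\ref{f:PheSim}\ref{f:PheSim:lsc02} with the closedness of $\dom A$. Everything else is bookkeeping with adjoints and the already-proved Theorem~\ref{theo:4}.
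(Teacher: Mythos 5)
Your proof is correct and follows essentially the same route as the paper: Theorem~\ref{theo:4} for the decomposition of $A$, Fact~\ref{f:PheSim} to identify $A_+$ with $\partial q_A$, and the identities $(A^*)_+=A_+$ and $(A^*)_{\mathlarger{\circ}}=-A_{\mathlarger{\circ}}$ to transfer the decomposition to the adjoint. The one point where you diverge --- deducing $q_A=\overline{q_A}$ directly from the fact that $q_A=\overline{q_A}+\iota_{\dom A}$ is a sum of two lower semicontinuous functions (using that $\dom A$ is closed), rather than the paper's domain chain $\dom A\subseteq\dom\partial\overline{q_A}\subseteq\dom\overline{q_A}\subseteq\overline{\dom A}=\dom A$ --- is a valid and slightly cleaner shortcut.
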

\begin{proof}
Fact~\ref{Rea:1}\ref{Th:31} and Fact~\ref{linear}\ref{sia:3vi}
imply that $A^{**}=A$ and that $\dom A^*=\dom A$.
By Fact~\ref{Sv:7}, $A^*$ is maximal monotone.
In view of Fact~\ref{linear}\ref{sia:3vii},
$A = A_+ + A_{\mathlarger{\circ}}$ and
$A^* = (A^*)_+ + (A^*)_{\mathlarger{\circ}} =
A_+ - A_{\mathlarger{\circ}}$.
Theorem~\ref{theo:4} yields the Borwein-Wiersma decomposition
$A = \partial \overline{q_A} + S$.
Hence $\dom A \subseteq\dom \partial \overline{q_A} \subseteq
\dom\overline{q_A} \subseteq \overline{\dom A} = \dom A$.
In turn, since $\dom A = \dom A_+$ and $q_{A} = q_{A_+}$, this implies
that $\dom A_+ = \dom\partial\overline{q_{A_+}}
= \dom\overline{q_{A_+}}$.
In view of Fact~\ref{f:PheSim}\ref{f:PheSim:lsc02}\&\ref{f:PheSim:quad},
$q_{A_+} = \overline{q_{A_+}}$ and $\gra A_+ \subseteq
\gra\partial\overline{q_{A_+}}$.
By Theorem~\ref{theo:4}, $A_+ = \partial\overline{q_A}$ on $\dom A$.
Since $\dom A = \dom A_+ = \dom \partial\overline{q_A}$ and
$q_A = q_{A_+}=\overline{q_{A_+}} = \overline{q_A}$, this implies that
$A_+ = \partial q_A = \partial \overline{q_A}$ \emph{everywhere}.
Therefore, $A_+$ is maximal monotone.
Since $A_+ = (A^*)_+$ and $-S$ is a single-valued linear section of
$(A^*)_{\mathlarger{\circ}} = -A_{\mathlarger{\circ}}$, we obtain similarly
the Borwein-Wiersma decomposition $A^* = A_+-S$.
\end{proof}

\begin{theorem}\label{Sk:1}
Let $A:X\To X^*$ be a maximal monotone linear relation
such that $A$ is skew, and let $S$ be a single-valued
linear selection of $A$.
Then $A$ is Borwein-Wiersma decomposable via
$\partial\iota_{\overline{\dom A}} + S$.
\end{theorem}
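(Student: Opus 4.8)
The plan is to verify directly that $\partial\iota_{\overline{\dom A}}+S$ equals $A$ as a set-valued operator, after first checking that the right-hand side really is of the form required by the definition of Borwein-Wiersma decomposition. Since $A$ is a linear relation, $\dom A$ is a linear subspace of $X$, so $\overline{\dom A}$ is a \emph{closed} subspace; hence $f:=\iota_{\overline{\dom A}}$ is proper, lower semicontinuous, and convex, and $\dom\partial f=\overline{\dom A}$. For $x$ in a closed subspace $V$ the subdifferential $\partial\iota_V(x)$ is the set of $x^*$ with $\langle v,x^*\rangle\le 0$ for all $v\in V$, which since $V$ is a subspace is exactly $V^\perp$; and $\partial\iota_V(x)=\varnothing$ for $x\notin V$. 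As for $S$: by hypothesis it is a single-valued linear selection of $A$, so $\dom S=\dom A$, it is at most single-valued, and for every $x\in\dom A$ we have $(x,Sx)\in\gra A$, whence $\langle x,Sx\rangle=0$ because $A$ is skew; thus $S$ is skew (cf.\ Proposition~\ref{MPP}). So the right-hand side has the correct shape once the equality is established.

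Next I would identify the perpendicular space appearing above. Perpendicular spaces are insensitive to closure, so $(\overline{\dom A})^\perp=(\dom A)^\perp$, and since $A$ is maximal monotone, Fact~\ref{linear}\ref{sia:3iv} gives $(\dom A)^\perp=A0$. Therefore $\partial\iota_{\overline{\dom A}}(x)=A0$ for every $x\in\overline{\dom A}$, and $=\varnothing$ otherwise.

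Finally I would check the identity $A=\partial\iota_{\overline{\dom A}}+S$ pointwise. The domain of the sum is $\dom\partial\iota_{\overline{\dom A}}\cap\dom S=\overline{\dom A}\cap\dom A=\dom A$, so both $A$ and the sum are empty-valued off $\dom A$; and $\dom A$ is exactly where $A$ is nonempty. For $x\in\dom A\subseteq\overline{\dom A}$ we have $Sx\in Ax$, so Fact~\ref{Rea:1}\ref{Th:28} yields $Ax=Sx+A0=Sx+\partial\iota_{\overline{\dom A}}(x)=(\partial\iota_{\overline{\dom A}}+S)(x)$. Combining the two cases gives $A=\partial\iota_{\overline{\dom A}}+S$ everywhere, which is the announced Borwein-Wiersma decomposition.

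I do not expect a genuine obstacle here: the argument is essentially a bookkeeping exercise resting on two structural facts already available, namely $A0=(\dom A)^\perp$ (Fact~\ref{linear}\ref{sia:3iv}, where maximal monotonicity is used) and $Ax=x^*+A0$ for any $x^*\in Ax$ (Fact~\ref{Rea:1}\ref{Th:28}), together with the elementary description of the subdifferential of the indicator function of a closed subspace. The only points needing a little care are confirming that $S$ inherits skewness from $A$ and that the domains of the two summands intersect in precisely $\dom A$, so that the sum is empty-valued exactly where $A$ is.
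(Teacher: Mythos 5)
Your argument is correct and follows essentially the same route as the paper's proof: identify $\partial\iota_{\overline{\dom A}}(x)=(\dom A)^\perp=A0$ via Fact~\ref{linear}\ref{sia:3iv}, and then conclude $Ax=Sx+A0$ from Fact~\ref{Rea:1}\ref{Th:28}; you merely spell out the domain bookkeeping and the skewness of $S$ that the paper compresses into one line. (The paper also notes an alternative proof via \cite[Lemma~2.2]{SV} and Theorem~\ref{theo:4}, but your direct computation matches its primary argument.)
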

\begin{proof}
Clearly, $S$ is skew.
Fact~\ref{Rea:1}\ref{Th:28}
and {Fact}~\ref{linear}\ref{sia:3iv} imply that
$A = A0 + S = (\dom A)^\bot + S = \partial\iota_{\overline{\dom A}} + S$,
as announced.
Alternatively, by \cite[Lemma~2.2]{SV}, $\dom A\subseteq\dom A^*$ and
now apply Theorem~\ref{theo:4}.
\end{proof}

Under a mild constraint qualification,
the sum of two Borwein-Wiersma decomposable
operators is also Borwein-Wiersma decomposable and
the decomposition of the sum is the corresponding sum
of the decompositions.

\begin{proposition}[sum rule] \label{Co:r2}
Let $A_1$ and $A_2$ be maximal monotone linear relations from $X$
to $X^*$. Suppose that $A_1$ and $A_2$ are Borwein-Wiersma
decomposable via
$A_1 = \partial f_1+ S_1$ and $A_2 = \partial f_2+S_2$,
respectively.
Suppose that $\dom A_1 - \dom A_2$ is closed.
Then $A_1+A_2$ is Borwein-Wiersma decomposable via
$A_1+A_2 = \partial(f_1+f_2) + (S_1+S_2)$.
\end{proposition}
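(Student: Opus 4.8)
The plan is to verify the two requirements of a Borwein--Wiersma decomposition for $A_1+A_2$: that $\partial(f_1+f_2)$ is the subdifferential of a proper lower semicontinuous convex function, and that $S_1+S_2$ is skew and at most single-valued. The nontrivial ingredient is the first one, where the constraint qualification $\dom A_1-\dom A_2$ closed must be converted into the subdifferential sum rule $\partial(f_1+f_2)=\partial f_1+\partial f_2$. The identity $A_1+A_2=\partial(f_1+f_2)+(S_1+S_2)$ will then follow by unwinding the given decompositions, provided we are careful about domains.

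First I would record that $f_1+f_2$ is proper, lower semicontinuous, and convex, with $\dom(f_1+f_2)=\dom f_1\cap\dom f_2$; properness uses Lemma~\ref{amy}\ref{domainpf} together with $\dom A_i\subseteq\dom f_i$, so that $\dom(f_1+f_2)\supseteq\dom A_1\cap\dom A_2=\dom(A_1+A_2)\neq\varnothing$ (the latter because $A_1+A_2$ is maximal monotone, hence has nonempty domain; here one uses that the sum of these particular maximal monotone linear relations is again maximal monotone, or simply that $\dom A_1\cap\dom A_2\supseteq$ the domain of the sum, which one can also see directly from $A_1+A_2=\partial f_1+\partial f_2+S_1+S_2$ once the sum rule is in place). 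Next, $S:=S_1+S_2$ is a linear relation (sum of linear relations), it is at most single-valued on $\dom S_1\cap\dom S_2$, and it is skew: for $x\in\dom S_1\cap\dom S_2$ one has $\langle Sx,x\rangle=\langle S_1x,x\rangle+\langle S_2x,x\rangle=0+0=0$, using that each $S_i$ is skew.

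The heart of the matter is the claim $\partial f_1+\partial f_2=\partial(f_1+f_2)$. The closedness of $\dom A_1-\dom A_2$ does not immediately give closedness of $\dom f_1-\dom f_2$, so I would instead work on $\dom A_1\cap\dom A_2$: by Lemma~\ref{amy}\ref{domlinear}, $\partial f_i+\II_{\dom A_i}$ is a (monotone) linear relation with domain exactly $\dom A_i$, so $\partial f_1+\partial f_2$, when restricted to $\dom A_1\cap\dom A_2$, agrees with $A_1-S_1+A_2-S_2=(A_1+A_2)-(S_1+S_2)$ there. To identify this with $\partial(f_1+f_2)$, invoke the Attouch--Br\'ezis-type sum rule for subdifferentials in reflexive Banach spaces: since $\dom A_1-\dom A_2$ is closed and $\dom A_i\subseteq\dom f_i$, the qualification $\bigcup_{\lambda>0}\lambda(\dom f_1-\dom f_2)$ being a closed subspace is what is needed, and one argues that this set equals (or at least contains, and is contained in) the closed subspace generated by $\dom A_1-\dom A_2$ after translating each $\dom f_i$ into the affine hull of $\dom A_i$ via Lemma~\ref{amy}\ref{domainpf+} (which pins $\dom f_i$ between $\dom A_i$ and $\overline{\dom A_i}$, both living in the same closed subspace $(A_i0)^\perp$). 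This is the step I expect to be the main obstacle: matching the constraint qualification as stated ($\dom A_1-\dom A_2$ closed) with the hypothesis of the functional sum rule, which is phrased in terms of $\dom f_1,\dom f_2$; the resolution is that $\dom f_i$ and $\dom A_i$ have the same closed affine hull, so $\bigcup_{\lambda>0}\lambda(\dom f_1-\dom f_2)$ and $\bigcup_{\lambda>0}\lambda(\dom A_1-\dom A_2)=\spand(\dom A_1-\dom A_2)$ generate the same closed subspace, and the Attouch--Br\'ezis condition is met.

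Finally I would assemble the pieces: on $\dom A_1\cap\dom A_2$ we have $(A_1+A_2)(x)=\partial f_1(x)+S_1(x)+\partial f_2(x)+S_2(x)=(\partial f_1+\partial f_2)(x)+(S_1+S_2)(x)=\partial(f_1+f_2)(x)+S(x)$, using the sum rule just established. Off $\dom A_1\cap\dom A_2$ both sides are empty (the left by definition of the sum; the right because $\dom\partial(f_1+f_2)+\dom S\subseteq\dom f_1\cap\dom f_2\cap\dom S_1\cap\dom S_2$, and one checks via Lemma~\ref{amy}\ref{domainpf} and the structure of the $S_i$ that this forces membership in $\dom A_1\cap\dom A_2$, or more simply: $\partial(f_1+f_2)+S$ is monotone and extends $A_1+A_2$, hence equals it by maximal monotonicity of $A_1+A_2$). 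Thus $A_1+A_2=\partial(f_1+f_2)+(S_1+S_2)$ with $f_1+f_2$ proper lower semicontinuous convex and $S_1+S_2$ skew at most single-valued, which is the asserted Borwein--Wiersma decomposition. $\hfill\blacksquare$
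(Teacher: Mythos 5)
Your overall strategy coincides with the paper's: reduce everything to the subdifferential sum rule $\partial f_1+\partial f_2=\partial(f_1+f_2)$ and then regroup terms. The one place where the argument goes astray is exactly the step you flag as the main obstacle. You assert that closedness of $\dom A_1-\dom A_2$ ``does not immediately give closedness of $\dom f_1-\dom f_2$'' and substitute the claim that $\bigcup_{\lambda>0}\lambda(\dom f_1-\dom f_2)$ and $\spand(\dom A_1-\dom A_2)$ ``generate the same closed subspace''. That is not what the Attouch--Br\'ezis/Simons qualification asks for: the set $\bigcup_{\lambda>0}\lambda(\dom f_1-\dom f_2)$ must itself \emph{be} a closed subspace, and having the same closed span as a closed subspace does not establish that. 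As written, the decisive hypothesis of the functional sum rule is not verified.

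The fix is a direct sandwich using the very lemma you cite. By Lemma~\ref{amy}\ref{domainpf+}, $\dom A_i\subseteq\dom f_i\subseteq\overline{\dom A_i}$, hence
\begin{equation*}
\dom A_1-\dom A_2\;\subseteq\;\dom f_1-\dom f_2\;\subseteq\;\overline{\dom A_1}-\overline{\dom A_2}\;\subseteq\;\overline{\dom A_1-\dom A_2}\;=\;\dom A_1-\dom A_2,
\end{equation*}
where the third inclusion holds because a difference of limits is a limit of differences, and the final equality is the hypothesis. Thus $\dom f_1-\dom f_2=\dom A_1-\dom A_2$ is itself a closed subspace (so in particular invariant under positive scaling, and the constraint qualification holds trivially), and \cite[Theorem~18.2]{Si2} yields $\partial f_1+\partial f_2=\partial(f_1+f_2)$. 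This is precisely the paper's argument. Once that identity is available, the conclusion is pure set arithmetic: $A_1+A_2=(\partial f_1+S_1)+(\partial f_2+S_2)=(\partial f_1+\partial f_2)+(S_1+S_2)=\partial(f_1+f_2)+(S_1+S_2)$ at every point, empty values included, so your closing appeal to maximal monotonicity of $A_1+A_2$ (which is itself not free) is unnecessary. The remaining parts of your write-up --- properness, lower semicontinuity and convexity of $f_1+f_2$, and skewness and single-valuedness of $S_1+S_2$ --- are correct.
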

\begin{proof}
By Lemma~\ref{amy}\ref{domainpf+},
$\dom A_1 \subseteq \dom f_1 \subseteq \overline{\dom A_1}$ and
$\dom A_2 \subseteq \dom f_2 \subseteq \overline{\dom A_2}$.
Hence $\dom A_1-\dom A_2 \subseteq
\dom f_1-\dom f_2\subseteq \overline{\dom A_1}-\overline{\dom A_2}
\subseteq\overline{\dom A_1-\dom A_2}=\dom A_1-\dom A_2$.
Thus, $\dom f_1-\dom f_2 = \dom A_1-\dom A_2$ is a closed subspace of $X$.
By \cite[Theorem 18.2]{Si2},
 $\partial f_1+\partial f_2=\partial (f_1+f_2)$; furthermore,
$S_1+S_2$ is clearly skew. The result thus follows.
\end{proof}

\section{Uniqueness results}
\label{uniqueness}

The main result in this section (Theorem~\ref{uniquepart}) states
that if a maximal monotone linear relation $A$ is Borwein-Wiersma
decomposable, then the subdifferential part of its decomposition is
unique on $\dom A$.
We start by showing that subdifferential operators that are
monotone linear relations are actually symmetric, which
is a variant of a well known result from Calculus.

\begin{lemma}\label{Bor:u1}
Let $f:X\to\RX$ be proper,
 lower semicontinuous, and convex.
Suppose that the maximal monotone operator $\partial f$
is a linear relation with closed domain.
Then $ \partial f = (\partial f)^*$.
\end{lemma}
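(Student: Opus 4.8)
The plan is to show that $\partial f$ is symmetric and then invoke Proposition~\ref{symm=adj}. Since $\partial f$ is a maximal monotone linear relation by hypothesis, Proposition~\ref{symm=adj} gives $\partial f = (\partial f)^*$ as soon as we know $\gra\partial f \subseteq \gra(\partial f)^*$, i.e.\ that $\langle x, y^*\rangle = \langle y, x^*\rangle$ for all $(x,x^*), (y,y^*) \in \gra\partial f$. So the whole problem reduces to establishing this symmetry relation, and the closedness of $\dom\partial f$ should be exactly what makes it work.

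First I would record that, by Fact~\ref{linear}\ref{sia:3vi} applied to $A = \partial f$ (which is maximal monotone with closed domain), we have $\dom(\partial f)^* = \dom\partial f =: D$, a closed subspace. Next, recall the Fitzpatrick-type quadratic: since $\partial f$ is a monotone linear relation, $q_{\partial f}\colon x\mapsto \tfrac12\langle x,\partial f(x)\rangle$ is well defined and convex on $D$ by Fact~\ref{linear}\ref{Nov:s2} (the value $\langle x, x^*\rangle$ being independent of the choice $x^*\in\partial f(x)$ because $\partial f(x) = x^* + (\partial f)0$ and $D\subseteq((\partial f)0)^\perp$). The key computational step is to compare this quadratic with $f$ itself. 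For $x\in D$ and $x^*\in\partial f(x)$, the subgradient inequality together with $x^*\in\partial f(x) \iff f(x) + f^*(x^*) = \langle x, x^*\rangle$ lets one evaluate directional behaviour of $f$ along $D$; more precisely, I would show that for $x\in D$ the function $t\mapsto f(tx)$ is a quadratic in $t$ on its domain, with $f(x) - f(0) = q_{\partial f}(x) + \langle x, x_0^*\rangle$ for a fixed $x_0^*\in\partial f(0)$, after the standard normalization $\partial f(0) \ni 0$ obtained by replacing $f$ with $f - \langle\cdot, x_0^*\rangle$ (which changes neither symmetry of $\partial f$ nor the hypotheses). Concretely: from $x^*\in\partial f(x)$ and $0\in\partial f(0)$ one gets $\langle x, x^*\rangle \ge f(x) - f(0) \ge \langle x, 0\rangle = 0$, and feeding in $2x$, using linearity $\partial f(2x) = 2\partial f(x) + (\partial f)0$ from Fact~\ref{Rea:1}\ref{Th:30}, pins down $f(x) - f(0) = \tfrac12\langle x, x^*\rangle = q_{\partial f}(x)$.

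With $f = q_{\partial f} + \iota_D$ (up to the linear term and constant removed above) established on the closed subspace $D$, the symmetry of $\partial f$ follows: $q_{\partial f}$ restricted to $D$ is a convex quadratic form, hence its associated bilinear form is symmetric, and $\partial f$ acting between $D$ and $X^*$ is (a relation whose single-valued part modulo $(\partial f)0$ is) the symmetric operator induced by that form. In bracket terms, for $(x,x^*),(y,y^*)\in\gra\partial f$ one computes, using $q_{\partial f}(x+y) = q_{\partial f}(x) + q_{\partial f}(y) + \tfrac12(\langle x,y^*\rangle + \langle y,x^*\rangle)$ and the subgradient characterization of $q_{\partial f} = f$ on $D$, that $\langle x, y^*\rangle = \langle y, x^*\rangle$. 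Hence $\gra\partial f\subseteq\gra(\partial f)^*$, so $\partial f$ is symmetric, and Proposition~\ref{symm=adj} delivers $\partial f = (\partial f)^*$.

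The main obstacle I anticipate is the bookkeeping around multivaluedness: $\partial f$ is a \emph{relation}, so one must consistently work modulo $(\partial f)0 = (\dom\partial f)^\perp$ and repeatedly use Fact~\ref{Rea:1}\ref{Th:28}\&\ref{Th:30} and Fact~\ref{linear}\ref{Nov:s1} to know that all the pairings $\langle x, y^*\rangle$ with $x\in D$ are well defined independently of the chosen subgradient. The closedness of $D = \dom\partial f$ enters crucially twice — once to guarantee $\dom(\partial f)^* = D$ via Fact~\ref{linear}\ref{sia:3vi} (so that $(\partial f)^*$ is a genuine comparison object of the same domain), and once to ensure $\iota_D$ is lower semicontinuous so that the identification $f = q_{\partial f} + \iota_D + (\text{linear}) + (\text{const})$ is an identity of proper l.s.c.\ convex functions rather than merely an equality up to closure. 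Everything else is the quadratic-expansion calculation, which is routine once the normalization $0\in\partial f(0)$ is in place.
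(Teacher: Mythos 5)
Your proposal is correct, but it takes a genuinely different route from the paper's. The paper proves symmetry analytically: it first shows $f$ restricted to the Banach space $Y=\dom f=\dom\partial f$ is continuous, identifies the directional derivative $f'(x;\cdot)=\scal{Ax}{\cdot}$ via the max formula, and then applies the Mean Value Theorem to an iterated difference quotient to obtain $\scal{Az}{y}=\scal{Ay}{z}$. You instead identify $f$ \emph{explicitly} as the quadratic form: after normalizing $0\in\partial f(0)$, a one-dimensional scaling argument along each line $\RR x$ gives $f(x)-f(0)=q_{\partial f}(x)$ on $\dom\partial f$, and symmetry then falls out of the expansion $q_{\partial f}(x+y)=q_{\partial f}(x)+q_{\partial f}(y)+\tfrac12(\scal{x}{y^*}+\scal{y}{x^*})$ combined with the subgradient inequality applied to $tx$ and $t\to 0$ (or $t<0$). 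Both arguments funnel into Proposition~\ref{symm=adj}. Your route is more elementary --- it avoids the continuity-of-convex-functions-on-Banach-spaces machinery and the directional-derivative calculus --- and, notably, it never actually uses closedness of $\dom\partial f$ in an essential way: the two places you claim it is ``crucial'' (that $\dom(\partial f)^*=\dom\partial f$, and lower semicontinuity of $\iota_D$) are not load-bearing for the symmetry conclusion, since all evaluations of $f$ occur at points of the subspace $\dom\partial f$ where $f$ is automatically finite. So your argument in fact yields the more general Theorem~\ref{Bor:U2} directly, whereas the paper must reach it afterwards by the ``shrink and dilate'' reduction of Lemma~\ref{dilation}.

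One detail is glossed: ``feeding in $2x$'' does not by itself pin down $f(x)-f(0)=\tfrac12\scal{x}{x^*}$; the inequalities obtained from the points $0,x,2x$ alone only confine $f(x)-f(0)$ to the interval $[0,\scal{x}{x^*}]$. You need the full one-dimensional argument: with $\psi(t)=f(tx)-f(0)$ one has $t\scal{x}{x^*}\in\partial\psi(t)$ for every $t\in\RR$ (linearity of the relation), whence $\psi(t)=\tfrac{t^2}{2}\scal{x}{x^*}$ either by integrating the a.e.\ derivative of the finite convex function $\psi$, by a dyadic refinement of the two-sided subgradient estimates, or by invoking \cite[Theorem~B]{Rocksub} on the line. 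This is a routine repair, not a flaw in the approach.
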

\begin{proof}
Set $A := \partial f$ and $Y := \dom f$. Since $\dom A$ is closed,
\cite[Theorem~18.6]{Si2} implies that $\dom f = Y = \dom A$.
By Fact~\ref{linear}\ref{sia:3vi}, $\dom A^* = \dom A$.
Let $x\in Y$ and consider the directional derivative
$g = f'(x;\cdot)$, i.e.,
\begin{equation*}
g\colon X\to\RXX\colon y\mapsto \lim_{t\downarrow 0}
\frac{f(x+ty)-f(x)}{t}.
\end{equation*}
By \cite[Theorem~2.1.14]{Zalinescu},
$\dom g = \bigcup_{r\geq 0}r\cdot(\dom f - x) = Y$.
On the other hand, $f$ is lower semicontinuous on $X$.
Thus, since $Y=\dom f$ is a Banach space,
$f|_Y$ is continuous by \cite[Theorem~2.2.20(b)]{Zalinescu}.
Altogether, in view of \cite[Theorem~2.4.9]{Zalinescu},
$g|_Y$ is continuous. Hence $g$ is lower semicontinuous.
Using \cite[Corollary~2.4.15]{Zalinescu} and
Fact~\ref{Rea:1}\ref{Sia:2b}, we now deduce that
$(\forall y\in Y)$
$g(y) = \sup\scal{\partial f(x)}{y} = \scal{Ax}{y} = \scal{x}{A^*y}$.
We thus have verified that
\begin{equation}
\label{e:091208:a}
(\forall x\in Y)(\forall y\in Y)\quad
f'(x;y) =  \scal{Ax}{y} = \scal{x}{A^*y}.
\end{equation}
In particular, $f|_Y$ is \emph{differentiable}.
Now fix $x,y,z$ in $Y$.
Then, using \eqref{e:091208:a}, we see that
\begin{align}
\label{e:091208:b}
\scal{Az}{y} &=
\lim_{s\downarrow 0} \frac{\scal{A(x+sz)}{y} -
\scal{Ax}{y}}{s} =
\lim_{s\downarrow 0} \frac{f'(x+sz;y)-f'(x;y)}{s}\\
&= \lim_{s\downarrow 0} \lim_{t\downarrow 0}
\Big(\frac{f(x+sz+ty)-f(x+sz)}{st} - \frac{f(x+ty)-f(x)}{st}\Big).\notag
\end{align}
Set $h\colon\RR\to\RR\colon s\mapsto f(x+sz+ty)-f(x+sz)$.
Since $f|_Y$ is differentiable, so is $h$.
For $s>0$, the Mean Value Theorem thus yields
$r_{s,t}\in \left]0,s\right[$
such that
\begin{align}
\label{e:091208:c}
\frac{f(x+sz+ty)-f(x+sz)}{s} - \frac{f(x+ty)-f(x)}{s}
&= \frac{h(s)}{s}-\frac{h(0)}{s} = h'(r_{s,t})\\
&=
f'(x+r_{s,t}z+ty;z) - f'(x+r_{s,t}z;z)\notag\\
&=t\scal{Ay}{z}. \notag
\end{align}
Combining \eqref{e:091208:b} with \eqref{e:091208:c},
we deduce that $\scal{Az}{y}=\scal{Ay}{z}$.
Thus, $A$ is symmetric.
The result now follows from Proposition~\ref{symm=adj}.
\end{proof}

To improve Lemma~\ref{Bor:u1}, we need the following ``shrink and dilate''
technique.

\begin{lemma}\label{dilation}
Let $A:X\To X^*$ be a monotone linear relation, and
let $Z$ be a closed subspace of $\dom A$.
Set $B = (A+\II_Z)+Z^\bot$. 
Then $B$ is maximal monotone and $\dom B=Z$.
\end{lemma}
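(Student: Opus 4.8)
The plan is to verify the three ingredients in order: $B$ is a monotone linear relation, $\dom B = Z$, and $B$ is maximal monotone. First I would record that $\II_Z$ and $Z^\bot$ are (trivial) monotone linear relations, so $B$, being a sum of monotone linear relations with overlapping domains, is a monotone linear relation; I must check the domains are compatible, which is the content of the next step.

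For the domain computation, note $\dom(A+\II_Z) = \dom A \cap \dom \II_Z = \dom A \cap Z = Z$, since $Z\subseteq\dom A$ by hypothesis. The summand $Z^\bot$ (viewed as the linear relation $x\mapsto Z^\bot$ if $x\in X$, which actually has full domain, but whose graph is $X\times Z^\bot$ — more precisely one should read $Z^\bot$ here as the constant relation sending every point to the subspace $Z^\bot$, or equivalently $\partial\iota_Z$ restricted appropriately) has domain containing $Z$, so $\dom B = Z \cap X = Z$. I would be a little careful to spell out exactly which linear relation ``$Z^\bot$'' denotes: the intended meaning, consistent with Fact~\ref{linear}\ref{sia:3iv} and the identity $(\dom A)^\bot = A0$, is the linear relation with graph $Z\times Z^\bot$ (equivalently $\II_Z + Z^\bot = N_Z = \partial\iota_Z$ has this graph when we also intersect domains), so that $\dom B = Z$ cleanly.

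For maximal monotonicity I would invoke Fact~\ref{linear}\ref{sia:3v}: since $Z = \dom B$ is a closed subspace, hence closed, it suffices to show $(\dom B)^\bot = B0$. We have $(\dom B)^\bot = Z^\bot$. On the other hand $B0 = (A+\II_Z)0 + Z^\bot = (A0\cap\text{?}) + Z^\bot$; more carefully, $(A+\II_Z)0 = A0 + \II_Z(0) = A0 + \{0\} = A0$ (as $0\in Z$), so $B0 = A0 + Z^\bot$. Now $A0 \subseteq (\dom A)^\bot \subseteq Z^\bot$ by Fact~\ref{linear}\ref{Nov:s1} together with $Z\subseteq\dom A$, hence $B0 = A0 + Z^\bot = Z^\bot = (\dom B)^\bot$. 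By Fact~\ref{linear}\ref{sia:3v}, $B$ is maximal monotone.

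The main obstacle is purely bookkeeping: pinning down the precise meaning of the symbol $Z^\bot$ as a linear relation (a constant relation with that subspace as every image, so its graph is $X\times Z^\bot$, and when added to $A+\II_Z$ the domain is forced down to $Z$) and keeping the domain intersections straight so that the sum $A+\II_Z$ is genuinely computed on $Z$ and $B0$ comes out to be exactly $Z^\bot$. Once the definitions are fixed, each step is a one-line application of Fact~\ref{linear}.
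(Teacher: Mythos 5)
Your proposal is correct and follows essentially the same route as the paper: identify $B$ with $A+\partial\iota_Z$ so that $\dom B=Z$, compute $B0=A0+Z^\bot=Z^\bot=(\dom B)^\bot$ using Fact~\ref{linear}\ref{Nov:s1}, and conclude maximality from Fact~\ref{linear}\ref{sia:3v}. Your extra care about the meaning of the symbol $Z^\bot$ as a relation is resolved exactly as you suspect, and is immaterial since $\dom(A+\II_Z)=Z$ already forces $\dom B=Z$ under either reading.
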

\begin{proof}
Since $Z\subseteq \dom A$ and $B = A+\partial\iota_Z$ 
it is clear that 
$B$ is a monotone linear relation with $\dom B = Z$.
By Fact~\ref{linear}~\ref{Nov:s1}, we have
\begin{equation*}
Z^{\perp}\subseteq B0 = A0+Z^{\perp}
\subseteq(\dom A)^{\perp}+Z^{\perp}
\subseteq Z^{\perp}+Z^{\perp}=Z^{\perp}.
\end{equation*}
Hence $B0 = Z^\bot = (\dom B)^\bot$. 
Therefore, by Fact~\ref{linear}\ref{sia:3v},
$B$ is maximal monotone.
\end{proof}

\begin{theorem}\label{Bor:U2}
Let $f:X\to\RX$ be  proper,
lower semicontinuous, and convex, and
let $Y$ be a linear subspace of $X$.
Suppose that $\partial f + \II_Y$ is a linear relation.
Then $\partial f + \II_Y$ is symmetric.
\end{theorem}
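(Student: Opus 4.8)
The goal is to reduce Theorem~\ref{Bor:U2} to Lemma~\ref{Bor:u1}, which already handles the case where $\partial f$ is itself a maximal monotone linear relation with closed domain. The obstacle is that $Y$ need not be closed and $\partial f + \II_Y$ need not be maximal monotone, so we cannot apply Lemma~\ref{Bor:u1} directly. The idea is to intersect with an \emph{arbitrary} closed subspace $Z\subseteq Y\cap\dom\partial f$ (equivalently $Z\subseteq\dom(\partial f+\II_Y)$), apply the ``shrink and dilate'' construction of Lemma~\ref{dilation} to build an auxiliary maximal monotone linear relation $B_Z$ with $\dom B_Z = Z$ closed, show $B_Z$ is a subdifferential operator, invoke Lemma~\ref{Bor:u1} to conclude $B_Z$ is symmetric, and then let $Z$ range over all such subspaces to recover symmetry of $\partial f+\II_Y$ on all of its domain.

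\medskip

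\textbf{Step 1: Set up notation and the auxiliary operator.} Write $A := \partial f + \II_Y$; by hypothesis this is a monotone linear relation (monotone because $\partial f$ is, and $\II_Y$ contributes nothing to the pairing). Let $Z$ be any closed subspace with $Z\subseteq\dom A = \dom\partial f\cap Y$. Following Lemma~\ref{dilation}, set
\begin{equation*}
B_Z := (A+\II_Z) + Z^\bot = \partial f + \II_Z + Z^\bot.
\end{equation*}
Since $\II_Y + \II_Z = \II_Z$ when $Z\subseteq Y$, this equals $(\partial f + \II_Z) + Z^\bot$, i.e.\ Lemma~\ref{dilation} applied to the monotone linear relation $\partial f$ with the closed subspace $Z\subseteq\dom\partial f$. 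Hence $B_Z$ is maximal monotone with $\dom B_Z = Z$, which is closed.

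\medskip

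\textbf{Step 2: Identify $B_Z$ as a subdifferential operator.} Observe that $\II_Z + Z^\bot = \partial\iota_Z$ (the subdifferential of the indicator of the closed subspace $Z$ takes the value $\{0\}+Z^\bot = Z^\bot$ on $Z$ and $\varnothing$ off $Z$). Thus $B_Z = \partial f + \partial\iota_Z$. Because $Z\subseteq\dom\partial f\subseteq\dom f$ and $Z$ is closed, the standard sum rule for subdifferentials (the constraint qualification being that $\dom f - Z = \dom f$ contains a point of continuity of $\iota_Z$, or more simply $0\in\inte(\dom f - \dom\iota_Z)$ after translation; alternatively one cites the Attouch--Br\'ezis / Simons condition since $\dom f - Z$ is closed once $Z\subseteq\dom f$ forces it to equal $\spand\dom f$ — here one must be slightly careful, but a translation argument plus \cite[Theorem~18.2 or Theorem~18.6]{Si2} applies since $Z$ is a closed subspace contained in $\dom f$) gives $\partial(f+\iota_Z) = \partial f + \partial\iota_Z = B_Z$. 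Therefore $B_Z = \partial g$ where $g := f + \iota_Z$ is proper (as $Z\cap\dom f\supseteq Z\neq\varnothing$), lower semicontinuous, and convex, and $B_Z$ is a maximal monotone linear relation with closed domain $Z$.

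\medskip

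\textbf{Step 3: Apply Lemma~\ref{Bor:u1} and transfer back.} By Lemma~\ref{Bor:u1}, $B_Z = (B_Z)^*$; in particular $B_Z$ is symmetric, so
\begin{equation*}
\scal{u}{B_Z v} = \scal{v}{B_Z u}\quad\text{for all } u,v\in Z,
\end{equation*}
interpreting these as the (single-valued) pairings guaranteed by Fact~\ref{Rea:1}\ref{Sia:2b}. Now fix any $x,y\in\dom A$. Choose $Z := \spand\{x,y\}$; this is finite-dimensional, hence closed, and $Z\subseteq\dom A\subseteq\dom\partial f\cap Y$. For $u\in Z$, any element of $B_Z u$ has the form $w^* + z^*$ with $w^*\in\partial f(u)$ and $z^*\in Z^\bot$; since $x,y\in Z$ we get $\scal{x}{B_Z u} = \scal{x}{\partial f(u)}$ and likewise for $y$ — the $Z^\bot$ part is annihilated. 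Hence the symmetry relation for $B_Z$ on the pair $x,y$ reads $\scal{x}{\partial f(y)} = \scal{y}{\partial f(x)}$ (these pairings being single-valued by Fact~\ref{Rea:1}\ref{Sia:2b} applied to $A$, since $x,y\in\dom A\subseteq\dom A^*$). As $x,y\in\dom A$ were arbitrary, $\scal{x}{x^*}=\scal{y}{y^*}$ for all $(x,x^*),(y,y^*)\in\gra A$, i.e.\ $A = \partial f + \II_Y$ is symmetric.

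\medskip

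\textbf{Main obstacle.} The delicate point is Step~2: verifying that the subdifferential sum rule applies to $f$ and $\iota_Z$ when $Z$ is merely a closed subspace of $\dom f$ with no interior. After translating so that $0\in Z\subseteq\dom f$, the relevant qualification is that $\dom f - \dom\iota_Z = \dom f - Z$ is such that the Attouch--Br\'ezis condition $\bigcup_{\lambda>0}\lambda(\dom f - Z)$ is a closed subspace holds; since $Z\subseteq\dom f$ we have $\dom f - Z\supseteq -Z$ and in fact $\dom f - Z = \dom f + Z$ need not obviously be closed. The cleanest route is to instead apply Lemma~\ref{dilation} and \cite[Theorem~18.6]{Si2} (which was already used in Lemma~\ref{Bor:u1}): Lemma~\ref{dilation} tells us $B_Z$ is maximal monotone with closed domain $Z$; if we can independently show $B_Z \subseteq \partial(f+\iota_Z)$ (which is immediate from the definition of subdifferential, since $w^*\in\partial f(u)$ and $z^*\in Z^\bot = \partial\iota_Z(u)$ gives $w^*+z^*\in\partial(f+\iota_Z)(u)$), then maximality of $B_Z$ forces $B_Z = \partial(f+\iota_Z)$, a maximal monotone linear relation with closed domain, and Lemma~\ref{Bor:u1} applies. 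This sidesteps the need for the sum rule's constraint qualification entirely.
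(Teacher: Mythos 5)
Your proposal is correct and, once you adopt the fix you yourself describe in the ``Main obstacle'' paragraph (prove $\gra B_Z\subseteq\gra\partial(f+\iota_Z)$ directly and invoke maximality of $B_Z$, rather than a subdifferential sum rule whose constraint qualification is dubious here), it coincides with the paper's own proof: take $Z=\spand\{x,y\}$, apply Lemma~\ref{dilation}, identify $B_Z=\partial(f+\iota_Z)$ by maximality, and conclude symmetry from Lemma~\ref{Bor:u1}. Only cosmetic slips remain (the final display should read $\scal{x}{y^*}=\scal{y}{x^*}$, and the single-valuedness of the pairings follows from $B_Z=B_Z^*$ rather than from an a priori inclusion $\dom A\subseteq\dom A^*$).
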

\begin{proof}
Put $A=\partial f+\II_{Y}$.
Assume that
$(x,x^*), (y,y^*) \in \gra A$.
Set $Z=\spand\{x,y\}$.
Let $B:X\To X^*$ be defined as in Lemma~\ref{dilation}.
Clearly, $\gra B\subseteq \gra \partial (f+\iota_{Z})$.
In view of the maximal monotonicity of $B$, we see that
$B=\partial(f+\iota_Z)$.
Since $\dom B=Z$ is closed, it follows from
Lemma~\ref{Bor:u1} that $B=B^*$.
In particular, we obtain that $\scal{x^*}{y}=\scal{y^*}{x}$.
Hence, $\scal{\partial f(x)}{y}=\scal{\partial f(y)}{x}$
and therefore $\partial f + \II_Y$ is symmetric.
\end{proof}

\begin{lemma}\label{theo:5}
Let $A\colon X \To X^*$
be a maximal monotone linear relation such that
$A$ is Borwein-Wiersma decomposable.
Then $\dom A\subseteq\dom A^*$.
\end{lemma}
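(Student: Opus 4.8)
The goal is to show: if $A$ is maximal monotone linear with a Borwein–Wiersma decomposition $A = \partial f + S$ (where $f$ is proper lsc convex and $S$ is skew, at most single-valued), then $\dom A \subseteq \dom A^*$. The natural strategy is to exploit Theorem~\ref{Bor:U2}: the restricted subdifferential $\partial f + \II_{\dom A}$ is a monotone linear relation by Lemma~\ref{amy}\ref{domlinear}, hence symmetric by Theorem~\ref{Bor:U2}. So on $\dom A$ we have a splitting of $A$ into a symmetric linear relation $\partial f + \II_{\dom A}$ and the skew single-valued part $S$. The key point will then be that on $\dom A$ the map $x\mapsto\scal{x}{Ay}$ is controlled, for each fixed $y\in\dom A$, by the symmetric part (which behaves like a subgradient evaluation) plus the skew part (which is antisymmetric), and Fact~\ref{Rea:1}\ref{Th:27} characterizes $\dom A^*$ exactly as those $x$ for which $\scal{x}{A(\cdot)}$ is single-valued and continuous on $\dom A$.

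First I would record that $\partial f + \II_{\dom A}$ is single-valued on $\dom A$: since $A$ is single-valued modulo $A0$ and $S0 = 0$, and $\dom A\subseteq\dom f$ by Lemma~\ref{amy}\ref{domainpf}, we get $A0 = \partial f(0) = (\partial f + \II_{\dom A})0$, so $(\partial f + \II_{\dom A})x = Ax - Sx$ in the sense of equality of affine subspaces with the same direction space $A0$. Fix $x, y\in\dom A$. Symmetry of $\partial f + \II_{\dom A}$ (from Theorem~\ref{Bor:U2} applied with $Y = \dom A$) gives $\scal{x}{(\partial f+\II_{\dom A})y} = \scal{y}{(\partial f+\II_{\dom A})x}$ — a single real number, independent of the representative. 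Writing $(\partial f + \II_{\dom A})u = Au - Su$ and using skewness of $S$ (so $\scal{x}{Sy} = -\scal{y}{Sx}$ by Proposition~\ref{MPP}), one obtains that $\scal{x}{Ay}$ is a single real number for all $x,y\in\dom A$; more precisely $\scal{x}{Ay} = \scal{x}{(\partial f+\II_{\dom A})y} + \scal{x}{Sy}$, and the first term is symmetric in $x,y$ while the second is antisymmetric.

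Next I would upgrade "single-valued" to "continuous" so as to invoke Fact~\ref{Rea:1}\ref{Th:27}. This is the part I expect to need care. The cleanest route mirrors the proof of Theorem~\ref{Bor:U2}: fix $x\in\dom A$, set $Z = \overline{\spand}\{x\}$ — no, better, for genuine continuity I would instead argue pointwise. Actually the efficient argument is: fix $y\in\dom A$; I want $x\mapsto\scal{x}{Ay}$ well defined on $\dom A$, which we now have, and that already places $y$ in... no, it is the other inclusion. Let me restart this step: to show $x\in\dom A^*$ for $x\in\dom A$, by Fact~\ref{Rea:1}\ref{Th:27} I must show $y\mapsto\scal{x}{Ay}$ is single-valued and continuous on $\dom A$. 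Single-valuedness is done. For continuity, note $\scal{x}{Ay} = \scal{x}{(\partial f + \II_{\dom A})y} - \scal{y}{Sx}$; the second term $y\mapsto -\scal{y}{Sx}$ is a continuous linear functional on $X$ (fixed $Sx\in X^*$), so it suffices to show $y\mapsto \scal{x}{(\partial f+\II_{\dom A})y}$ is continuous on $\dom A$. By symmetry this equals $y\mapsto\scal{y}{(\partial f+\II_{\dom A})x} = \scal{y}{Ax - Sx}$, again a fixed continuous linear functional of $y$. Hence $y\mapsto\scal{x}{Ay} = \scal{y}{Ax - Sx} - \scal{y}{Sx} = \scal{y}{Ax - 2Sx}$ is continuous (and single-valued) on $\dom A$, so $x\in\dom A^*$ by Fact~\ref{Rea:1}\ref{Th:27}. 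Since $x\in\dom A$ was arbitrary, $\dom A\subseteq\dom A^*$.

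The main obstacle is the bookkeeping around the multivaluedness of $A$ and $\partial f$: all the identities above must be read as identities of the \emph{values} $\scal{x}{Ay}$, which requires knowing these pairings do not depend on the chosen representative — this is exactly what single-valuedness of $\partial f + \II_{\dom A}$ on $\dom A$ (Lemma~\ref{amy}) together with $\scal{x}{A0} = \scal{x}{\partial f(0)} = 0$ for $x\in\dom f\supseteq\dom A$ (Lemma~\ref{amy}\ref{domainpf}, since $\dom A\subseteq(A0)^\perp$) delivers. Once that is nailed down, the symmetric/skew split plus Fact~\ref{Rea:1}\ref{Th:27} finishes things cleanly; no deep analysis beyond Theorem~\ref{Bor:U2} is needed.
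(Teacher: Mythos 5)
Your proposal is correct and follows essentially the same route as the paper: both apply Theorem~\ref{Bor:U2} to get symmetry of $\partial f+\II_{\dom A}=(A-S)+\II_{\dom A}$, combine it with skewness of $S$ to obtain the identity $\scal{x}{Ay}=\scal{Ax-2Sx}{y}$ for $x,y\in\dom A$, and conclude that $x\in\dom A^*$. The only cosmetic difference is at the very end, where you invoke the characterization of $\dom A^*$ in Fact~\ref{Rea:1}\ref{Th:27} while the paper reads off $(A-2S)x\subseteq A^*x$ directly from the definition of the adjoint.
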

\begin{proof}
By hypothesis,
there exists a proper lower semicontinuous
and convex function $f\colon X\to\RX$ and an
at most single-valued skew operator $S$
such that $A=\partial f+S$.
Hence $\dom A\subseteq\dom S$, and
Theorem~\ref{Bor:U2} implies that $(A-S) + \II_{\dom A}$ is symmetric.
Let $x$ and $y$ be in $\dom A$.
\begin{align*}
\langle Ax-2Sx,y\rangle &=\langle Ax-Sx,y\rangle-\langle Sx,y\rangle
=\langle Ay-Sy,x\rangle-\langle Sx,y\rangle\\
&=\langle Ay,x\rangle-\langle Sy,x\rangle-\langle Sx,y\rangle
=\langle Ay,x\rangle,
\end{align*}
which implies that $(A-2S)x\subseteq A^*x$.
Therefore, $\dom A=\dom (A-2S)\subseteq\dom A^*$.
\end{proof}

\begin{remark}
We can now derive part of the conclusion of
of Proposition~\ref{Co:r2} differently as follows.
Since $\dom A_1 - \dom A_2$ is closed,
\cite[Theorem~5.5]{SiZ} or \cite{Voi} implies that
$A_1+A_2$ is maximal monotone; moreover,
\cite[Theorem~7.4]{Borwein} yields
$(A_1+A_2)^* = A_1^* + A_2^*$.
Using Lemma~\ref{theo:5}, we thus obtain
$\dom(A_1+A_2) = \dom A_1 \cap \dom A_2
\subseteq \dom A_1^* \cap \dom A_2^* =
\dom(A_1^*+A_2^*) = \dom (A_1+A_2)^*$.
Therefore, $A_1+A_2$ is Borwein-Wiersma decomposable by
Theorem~\ref{theo:4}.
\end{remark}

\begin{theorem}[characterization of subdifferential operators]
\label{Bor:U3}
Let $A:X\To X^*$ be a monotone linear relation.
Then $A$ is maximal monotone and symmetric $\Leftrightarrow$
there exists a proper lower semicontinuous convex
function $f\colon X\to\RX$ such that $A=\partial f$.
\end{theorem}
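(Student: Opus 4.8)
The plan is to prove both implications of Theorem~\ref{Bor:U3}, with the forward direction being essentially immediate from results already established and the reverse direction requiring the bulk of the work.

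For ``$\Leftarrow$'': suppose $A = \partial f$ for some proper lower semicontinuous convex $f$. Since $A$ is assumed to be a (monotone) linear relation, and since the subdifferential of a proper lsc convex function is always maximal monotone, $A$ is maximal monotone. To get symmetry, I would like to invoke Theorem~\ref{Bor:U2} with $Y = X$, which immediately gives that $\partial f = \partial f + \II_X$ is symmetric. Note Theorem~\ref{Bor:U2} has no closedness hypothesis on the domain (that restriction was only in Lemma~\ref{Bor:u1}), so this applies directly and cleanly.

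For ``$\Rightarrow$'': suppose $A$ is maximal monotone, symmetric, and a linear relation. By Proposition~\ref{symm=adj}, $A = A^*$, so trivially $\dom A \subseteq \dom A^*$, and Theorem~\ref{theo:4} applies. Since $A$ is symmetric we have $A_+ = \thalb A + \thalb A^* = A$, and $A_{\mathlarger{\circ}} = \thalb A - \thalb A^* = 0$ on $\dom A$, so the skew selection $S$ in Theorem~\ref{theo:4} can be taken to be $0$. Theorem~\ref{theo:4} then yields $A = \partial\overline{q_A} + 0 = \partial\overline{q_A}$, with $f := \overline{q_A}$ proper lower semicontinuous convex (properness because $\dom A \neq \varnothing$ and $q_A$ is finite there, lower semicontinuity by taking the lsc hull, convexity from Fact~\ref{f:PheSim}\ref{f:PheSim:lsc02} which gives $q_A$ convex hence $\overline{q_A}$ convex). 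This exhibits the desired $f$.

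The main (and really only) obstacle is making sure the hypotheses of the cited results are genuinely met: in particular that Theorem~\ref{theo:4}'s conclusion $A = \partial\overline{q_A} + S$ specializes correctly when $A^* = A$ so that $S$ may be chosen as the zero map, and that $\overline{q_A}$ is proper — which requires only that $\dom A \neq \varnothing$, automatic since $0 \in \dom A$ as $A$ is a linear relation. There is essentially no hard computation here; the theorem is a clean repackaging of Theorem~\ref{theo:4} (equivalently Corollary~\ref{Sk:2}) together with Theorem~\ref{Bor:U2}, and the proof is a matter of citing them in the right order.
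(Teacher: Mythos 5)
Your proof is correct and follows essentially the same route as the paper: the ``$\Leftarrow$'' direction is identical (Theorem~\ref{Bor:U2} with $Y=X$, plus the standard maximality of subdifferentials), and for ``$\Rightarrow$'' you reach $A=\partial\overline{q_A}$ via Proposition~\ref{symm=adj} and Theorem~\ref{theo:4} (equivalently Corollary~\ref{Sk:2}), whereas the paper cites Fact~\ref{f:PheSim}\ref{f:PheSim:quad+} directly --- but that fact is exactly what underlies Theorem~\ref{theo:4}, so the arguments coincide in substance.
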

\begin{proof}
``$\Rightarrow$'': {Fact}~\ref{f:PheSim}\ref{f:PheSim:quad+}.
``$\Leftarrow$'': Apply Theorem~\ref{Bor:U2} with $Y=X$.
\end{proof}

\begin{remark}
Theorem~\ref{Bor:U3} generalizes
\cite[Theorem~5.1]{PheSim} of Simons and Phelps.
\end{remark}

\begin{theorem}[uniqueness of the subdifferential part]\label{uniquepart}
Let $A:X\To X^*$ be a maximal monotone linear relation
such that $A$ is Borwein-Wiersma decomposable.
Then on $\dom A$,
the subdifferential part in the decomposition is unique
and the skew part must be a linear selection of
$A_{\mathlarger{\circ}}$.
\end{theorem}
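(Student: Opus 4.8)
The plan is to leverage Theorem~\ref{theo:4} and Lemma~\ref{theo:5} together. Since $A$ is Borwein-Wiersma decomposable, Lemma~\ref{theo:5} gives $\dom A\subseteq\dom A^*$, so Theorem~\ref{theo:4} applies: $A$ is decomposable via $A=\partial\overline{q_A}+S_0$, where $S_0$ is a linear single-valued selection of $A_{\mathlarger{\circ}}$, and moreover $\partial\overline{q_A}=A_+$ on $\dom A$. So there is at least one decomposition whose subdifferential part agrees with $A_+$ on $\dom A$. It remains to show \emph{every} Borwein-Wiersma decomposition of $A$ has this property on $\dom A$.

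So suppose $A=\partial f+S$ is an arbitrary Borwein-Wiersma decomposition, with $f$ proper lsc convex and $S$ at most single-valued and skew. By Lemma~\ref{amy}\ref{domlinear}, $\partial f+\II_{\dom A}$ is a monotone linear relation; by Theorem~\ref{Bor:U2} (with $Y=\dom A$), it is symmetric, i.e.\ $\scal{x^*}{y}=\scal{y^*}{x}$ for all $(x,x^*),(y,y^*)\in\gra(\partial f+\II_{\dom A})$. The key computation, already carried out in the proof of Lemma~\ref{theo:5}, shows that on $\dom A$ one has $A-2S\subseteq A^*$, hence $S$ is a linear single-valued selection of $A_{\mathlarger{\circ}}=\thalb A-\thalb A^*$: indeed for $x,y\in\dom A$, $\scal{Ax-2Sx}{y}=\scal{Ay}{x}$, so $(A-2S)x\subseteq A^*x$, and therefore $Sx=\thalb(Ax-A^*x)$ is well-defined and single-valued as an element of $A_{\mathlarger{\circ}}x$. (One checks single-valuedness by noting $S$ is by hypothesis single-valued on $\dom A$, so this forces $Ax-A^*x$ to be a singleton there, consistent with $\dom A\subseteq\dom A^*$ and Fact~\ref{Rea:1}\ref{Sia:2b}.) This establishes the claim about the skew part.

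For the subdifferential part: on $\dom A$ we have $\partial f=A-S$, and since $S$ is a selection of $A_{\mathlarger{\circ}}$ we get $\partial f=A-A_{\mathlarger{\circ}}=A_+$ on $\dom A$ by Fact~\ref{linear}\ref{sia:3vii}. Thus any two Borwein-Wiersma decompositions $A=\partial f_1+S_1=\partial f_2+S_2$ satisfy $\partial f_1=A_+=\partial f_2$ on $\dom A$, which is the asserted uniqueness.

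The main obstacle is the careful handling of the linear-relation/single-valued bookkeeping in the step $Sx=\thalb(Ax-A^*x)$: one must verify that the inclusion $(A-2S)x\subseteq A^*x$ together with single-valuedness of $S$ on $\dom A$ genuinely pins down $S$ as the single-valued selection, rather than merely a selection of a possibly larger relation; this uses Fact~\ref{Rea:1}\ref{Sia:2b} (that $\scal{A^*x}{y}=\scal{x}{Ay}$ is a singleton) and Lemma~\ref{amy}. The symmetry input from Theorem~\ref{Bor:U2} is what makes the computation in Lemma~\ref{theo:5} go through, and the rest is essentially reading off consequences. Everything else is routine given Theorems~\ref{theo:4} and~\ref{Bor:U2} and Lemmas~\ref{amy} and~\ref{theo:5}.
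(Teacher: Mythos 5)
Your proof is correct and uses essentially the same ingredients as the paper's: the symmetry of $\partial f+\II_{\dom A}$ from Theorem~\ref{Bor:U2} played against the antisymmetry of the skew part via Proposition~\ref{MPP}, the computation $(A-2S)x\subseteq A^*x$ from Lemma~\ref{theo:5}, and Theorem~\ref{theo:4}; the only organizational difference is that you identify every decomposition with the canonical one $A_++A_{\mathlarger{\circ}}$ on $\dom A$, whereas the paper compares two arbitrary decompositions pairwise and shows the difference of their subdifferential parts lands in $(\dom A)^\bot=A0$. One parenthetical remark of yours is wrong but harmless: $Ax-A^*x$ is the coset $2Sx+A0$, not a singleton unless $A0=\{0\}$; what you actually need (and already have) is only the inclusion $Sx\in A_{\mathlarger{\circ}}x=Sx+A0$, which together with the single-valuedness and linearity of $S$ on $\dom A$ makes $S$ a linear selection of $A_{\mathlarger{\circ}}$ there.
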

\begin{proof}
Let $f_1$ and $f_2$ be proper lower semicontinuous convex functions
from $X$ to $\RX$, and let $S_1$ and $S_2$ be at most single-valued
skew operators from $X$ to $X^*$ such that
\begin{equation}
\label{Borw:7}
A=\partial f_1+S_1=\partial f_2+S_2.
\end{equation}
Set $D = \dom A$.
Since $S_1$ and $S_2$ are single-valued on $D$, we have
$A-S_1=\partial f_1$ and  $A-S_2=\partial f_2$ on $D$.
Hence $\partial f_1 + \II_{D}$ and $\partial f_2+\II_{D}$ are
monotone linear relations with
\begin{equation}
\label{e:091208n:a}
(\partial f_1 + \II_{D})(0) = (\partial f_2 + \II_{D})(0)=A0.
\end{equation}
By Theorem~\ref{Bor:U2},
$\partial f_1 + \II_{D}$ and $\partial f_2+\II_{D}$ are
symmetric, i.e.,
$$
(\forall x\in D)(\forall y\in D)\quad
\langle \partial f_1 (x),y\rangle=\langle \partial f_1(y),x\rangle
\quad\text{and}\quad
\langle \partial f_2 (x),y\rangle=\langle \partial f_2(y),x\rangle .$$
Thus,
\begin{equation}
\label{Bor:u9}
(\forall x\in D)(\forall y\in D)\quad
\langle \partial f_2 (x)-\partial f_1 (x),y\rangle
=\langle \partial f_2 (y)-\partial f_1 (y),x\rangle.
\end{equation}
On the other hand, by \eqref{Borw:7},
$(\forall x\in D)$ $S_1x-S_2x\in\partial f_2(x)-\partial f_1(x)$.
Then by Fact~\ref{linear}\ref{Sia:2c} and Proposition~\ref{MPP},
\begin{align}
\label{Bor:u10}
(\forall x\in D)(\forall y\in D)\quad
\langle \partial f_2(x)-\partial f_1(x),y\rangle &=
\langle S_1x-S_2x,y\rangle\\
&=-\langle S_1y-S_2y,x\rangle\notag\\
&=-\langle \partial f_2 (y)-\partial f_1 (y),x\rangle. \notag
\end{align}
Now fix $x\in D$.
Combining \eqref{Bor:u9} and \eqref{Bor:u10}, we get
$(\forall y\in D)$
$\langle \partial f_2(x)-\partial f_1(x),y\rangle=0$.
Using {Fact}~\ref{linear}\ref{sia:3iv}, we see that
$$\partial f_2(x)-\partial f_1(x)\subseteq D^\bot = (\dom A)^{\bot}=A0.$$
Hence, in view of {Lemma}~\ref{amy}\ref{domlinear}, \eqref{e:091208n:a},
and Fact~\ref{Rea:1}\ref{Th:28},
$$ \partial f_1 + \II_D =\partial f_2 + \II_D.$$
Furthermore, combining \eqref{Bor:u9} and \eqref{Bor:u10} gives
$(\forall y\in D)$
$\langle  S_2x-S_1x,y\rangle=0$; thus,
$$ S_2x-S_1x\in D^\bot = (\dom A)^{\bot}=A0.$$
Now Lemma~\ref{theo:5} implies that
$\dom A\subseteq \dom A^*$.
In turn, Theorem~\ref{theo:4} allows us to consider the case when
$S_1$ is a linear selection of $A_{\mathlarger{\circ}}$ on
$\dom A_{\mathlarger{\circ}} = \dom A$.
Using Fact~\ref{linear}\ref{sia:3iv}, we obtain
Then $S_2 x\in S_1x+A0=S_1x+A_{\mathlarger{\circ}}0
=A_{\mathlarger{\circ}}x$.
Therefore, $S_2$ must be a linear selection of
$A_{\mathlarger{\circ}}$ on $\dom A$ as well.
\end{proof}

\begin{remark} In a Borwein-Wiersma decomposition, 
the skew part need not be unique:
indeed, assume that $X=\RR^2$, set 
$Y:=\RR\times\{0\}$, and let $S$ be given by
$\gra S =\menge{\big((x,0), (0,x)\big)}{x\in\RR}$. 
Then $S$ is skew and the maximal monotone linear relation $\partial
\iota_Y$ has two distinct Borwein-Wiersma decompositions, namely
$\partial \iota_{Y}+0$ and $\partial\iota_{Y}+S$.
\end{remark}

\begin{proposition}
Let $A:X\To X^*$ be a maximal monotone linear relation.
Suppose that $A$ is Borwein-Wiersma decomposable, with subdifferential part
$\partial f$, where
$f\colon X\to\RX$ is proper, lower semicontinuous and convex.
Then there exists a constant $\alpha\in\RR$ such that the following hold.
\begin{enumerate}
\item
\label{w:eins}
$f=\overline{q_A}+\alpha$ on $\dom A$.
\item
\label{w:zwei}
If $\dom A$ is closed, then
$f=\overline{q_A}+\alpha = q_A + \alpha$ on $X$.
\end{enumerate}
\end{proposition}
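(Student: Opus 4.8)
The plan is to exploit the uniqueness result (Theorem~\ref{uniquepart}) to pin down $\partial f$ on $\dom A$, and then combine this with the explicit Borwein-Wiersma decomposition furnished by Theorem~\ref{theo:4}. First I would invoke Lemma~\ref{theo:5} to get $\dom A\subseteq\dom A^*$, so that Theorem~\ref{theo:4} applies and yields the decomposition $A=\partial\overline{q_A}+S$ with $\partial\overline{q_A}=A_+$ on $\dom A$. Now $A=\partial f+S'$ is our given decomposition (with $f$ the hypothesized subdifferential potential and $S'$ the accompanying skew part). By Theorem~\ref{uniquepart}, the subdifferential part is unique on $\dom A$ in the sense that $\partial f+\II_{\dom A}=\partial\overline{q_A}+\II_{\dom A}$; in particular $\partial f(x)=\partial\overline{q_A}(x)=A_+x$ for every $x\in\dom A$.

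The next step is to upgrade the equality of subdifferentials to an equality of function values up to a constant. I would fix a base point $x_0\in\dom A$ and work on the line segments/affine subspace through $\dom A$. Since $\partial f+\II_{\dom A}$ and $\partial\overline{q_A}+\II_{\dom A}$ are symmetric monotone linear relations (Theorem~\ref{Bor:U2}) agreeing on $\dom A$, the functions $f$ and $\overline{q_A}$ restricted to $\dom A$ have the same subdifferential as convex functions on the subspace $\dom A$ (more precisely, after restricting to $\overline{\dom A}$, where by Lemma~\ref{amy}\ref{domainpf+} both $f$ and $\overline{q_A}$ are finite). Two proper lower semicontinuous convex functions on a Banach space whose subdifferentials coincide differ by a constant — this is a standard fact (it follows, e.g., by integrating along segments using the Mean Value Theorem for convex functions, or from the fact that $\partial(f-\overline{q_A})\ni 0$ everywhere on the relevant domain forces $f-\overline{q_A}$ to be constant on that convex set). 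Applying this on $\dom A$ gives a constant $\alpha\in\RR$ with $f=\overline{q_A}+\alpha$ on $\dom A$, which is \ref{w:eins}.

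For part \ref{w:zwei}, assume $\dom A$ is closed. Then Corollary~\ref{cbr:1} gives $q_A=\overline{q_A}$ (everywhere), and Lemma~\ref{amy}\ref{domainpf++} gives $\dom\partial f=\dom f=\dom A$, so $f$ is finite exactly on the closed subspace $\dom A$ and $+\infty$ off it. Since $\overline{q_A}=q_A$ also has domain $\dom A$ and agrees with $f$ up to $\alpha$ there by \ref{w:eins}, and both functions equal $+\infty$ outside $\dom A$, we conclude $f=\overline{q_A}+\alpha=q_A+\alpha$ on all of $X$ (with the usual convention $+\infty+\alpha=+\infty$).

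**Main obstacle.** The delicate point is the passage from "equal subdifferentials" to "equal up to a constant" on $\dom A$, since $\dom A$ need not be closed and the subdifferentials here are the full-space subdifferentials $\partial f$, which may contain extra normal-cone directions beyond those seen inside $\dom A$. The clean way around this is to pass to $\overline{\dom A}$, restrict both functions there (they are finite on $\dom A$ by Lemma~\ref{amy}\ref{domainpf+}, hence continuous and finite on the affine hull after closure arguments), observe that the \emph{linear-relation} subdifferentials $\partial f+\II_{\dom A}$ and $\partial\overline{q_A}+\II_{\dom A}$ coincide by Theorem~\ref{uniquepart}, and then apply the classical constancy criterion for convex functions with identical subdifferentials along line segments within the convex set $\dom A$; the skew parts cancel because they contribute nothing to $\langle\partial f(x)-\partial\overline{q_A}(x),\,x-x_0\rangle$ for $x,x_0\in\dom A$.
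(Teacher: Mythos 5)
Your proposal is correct and follows the paper's skeleton exactly up to the crucial step: Lemma~\ref{theo:5}, Theorem~\ref{theo:4} and Theorem~\ref{uniquepart} give $\partial f+\II_{\dom A}=\partial\overline{q_A}+\II_{\dom A}$ in both arguments, and part~\ref{w:zwei} is handled identically (Lemma~\ref{amy}\ref{domainpf++} applied to both subdifferentials, plus Corollary~\ref{cbr:1} for $q_A=\overline{q_A}$). Where you diverge is in converting equality of subdifferentials \emph{on $\dom A$ only} into equality of function values up to a constant. Your headline justification --- two proper lower semicontinuous convex functions with the same subdifferential differ by a constant --- does not apply as stated, since $\partial f$ and $\partial\overline{q_A}$ are only known to agree on $\dom A$, which need not be closed and may be strictly smaller than $\dom f$ and $\dom\overline{q_A}$; you correctly identify this as the main obstacle. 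The paper's remedy is the ``shrink and dilate'' Lemma~\ref{dilation}: for $x,y\in\dom A$ it passes to the closed finite-dimensional subspace $Z=\spand\{x,y\}$, uses maximal monotonicity to identify the dilated relation with both $\partial(f+\iota_Z)$ and $\partial(\overline{q_A}+\iota_Z)$, and only then invokes Rockafellar's theorem \cite{Rocksub} for these two genuinely proper lower semicontinuous convex functions, yielding $f(x)-\overline{q_A}(x)=f(y)-\overline{q_A}(y)$. Your remedy is to integrate along the segment $[x,y]\subseteq\dom A$: since $\partial f(z)=\partial\overline{q_A}(z)\neq\varnothing$ at every $z$ on the segment, the finite convex functions $t\mapsto f(x+t(y-x))$ and $t\mapsto\overline{q_A}(x+t(y-x))$ share a subgradient at every $t\in[0,1]$, hence have equal derivatives off a countable set and differ by a constant. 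Both routes work; yours is more elementary and avoids Lemma~\ref{dilation} and the maximality bookkeeping, while the paper's reduction to closed subspaces lets it quote \cite{Rocksub} verbatim with no one-dimensional analysis. If you write yours up, do supply the two details on which the segment argument silently relies: continuity of the one-dimensional restrictions at the endpoints $t=0,1$ (which follows from the nonemptiness of $\partial f(x)$ and $\partial f(y)$ together with convexity) and the a.e.\ differentiability used to conclude constancy of the difference.
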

\begin{proof}
Let $S$ be a linear single-valued selection of $A_{\mathlarger{\circ}}$.
By Lemma~\ref{theo:5}, $\dom A\subseteq\dom A^*$.
In turn, Theorem~\ref{theo:4} yields
$$A=\partial \overline{q_A} +S.$$
Let 
$\{x,y\}\subset \dom A$. 
By Theorem~\ref{uniquepart},
$\partial f + \II_{\dom A} = \partial\overline{q_A}+\II_{\dom A}$.
Now set $Z = \spand\{x,y\}$, 
apply Lemma~\ref{dilation} to the
monotone linear relation $\partial f + \II_{\dom A} =
\partial\overline{q_A}+\II_{\dom A}$, and let $B$ be as in
Lemma~\ref{dilation}. 
Note that
$\gra B =  \gra(\partial\overline{q_A} + \partial\iota_{Z}) \subseteq
\gra\partial(\overline{q_A}+\iota_Z)$
and that $\gra B = \gra(\partial f + \partial\iota_{Z})\subseteq
\gra\partial(f+\iota_Z)$.
By maximal monotonicity of $B$, we conclude that
$B= \partial(\overline{q_A}+\iota_Z) = \partial(f+\iota_Z)$.
By \cite[Theorem~B]{Rocksub}, there exists $\alpha\in\RR$ such that
$f+\iota_{Z}=\overline{q_A}+\iota_{Z}+\alpha$.
Hence $\alpha = f(x)-\overline{q_A}(x)=f(y)-\overline{q_{A}}(y)$ and repeating this argument
with $y\in(\dom A)\smallsetminus\{x\} $, we see that
\begin{equation}\label{differenceconst}
f=\overline{q_{A}}+\alpha \quad\text{on $\dom A$}
\end{equation}
and \ref{w:eins} is thus verified.
Now assume in addition that $\dom A$ is closed.
Applying Lemma~\ref{amy}\ref{domainpf++} with both
$\partial f$ and $\partial\overline{q_A}$, we obtain
$$\dom \overline{q_A}=\dom \partial \overline{q_A}=
\dom A=\dom \partial f=\dom f.$$
Consequently, \eqref{differenceconst} now yields
$f=\overline{q_A} +\alpha$. Finally, Corollary~\ref{cbr:1}
implies that $q_A = \overline{q_A}$.
\end{proof}

\section{Characterizations and examples}
\label{characterization}

The following characterization of Borwein-Wiersma
decomposability of a maximal monotone linear relation is quite pleasing.

\begin{theorem}[characterization of Borwein-Wiersma decomposability]\label{theo:9}
Let $A\colon X \rightrightarrows X^*$
be a maximal monotone linear relation.
 Then the following are equivalent.
\begin{enumerate}
\item
\label{theo:9i}
$A$ is Borwein-Wiersma decomposable.
\item
\label{theo:9ii}
$\dom A\subseteq\dom A^*$.
\item
\label{theo:9iii}
$A=A_++A_{\mathlarger{\circ}}$.
\end{enumerate}
\end{theorem}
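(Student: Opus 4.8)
The plan is to establish the cycle of implications $\ref{theo:9i}\Rightarrow\ref{theo:9ii}\Rightarrow\ref{theo:9iii}\Rightarrow\ref{theo:9i}$, drawing almost entirely on results already assembled in the excerpt. For $\ref{theo:9i}\Rightarrow\ref{theo:9ii}$, this is precisely the content of Lemma~\ref{theo:5}: if $A$ is Borwein-Wiersma decomposable then $\dom A\subseteq\dom A^*$, so nothing new is needed here. For $\ref{theo:9ii}\Rightarrow\ref{theo:9iii}$, I would invoke Fact~\ref{linear}\ref{sia:3vii}, which states exactly that a maximal monotone linear relation $A$ with $\dom A\subseteq\dom A^*$ satisfies $A = A_+ + A_{\mathlarger{\circ}}$ (along with the two companion identities). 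Again this is immediate.

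The only implication requiring a genuine argument is $\ref{theo:9iii}\Rightarrow\ref{theo:9i}$, and even this is short. Assuming $A = A_+ + A_{\mathlarger{\circ}}$, I first want to extract from this the domain inclusion $\dom A \subseteq \dom A^*$. The natural route: the identity $A = A_+ + A_{\mathlarger{\circ}}$ forces $\dom A \subseteq \dom A_+ \cap \dom A_{\mathlarger{\circ}} = \dom A \cap \dom A^*$ (since $A_+ = \tfrac12 A + \tfrac12 A^*$ and likewise for $A_{\mathlarger{\circ}}$ both have domain $\dom A \cap \dom A^*$), whence $\dom A \subseteq \dom A^*$. Then Theorem~\ref{theo:4} directly gives the Borwein-Wiersma decomposition $A = \partial\overline{q_A} + S$, where $S$ is any linear single-valued selection of $A_{\mathlarger{\circ}}$, and we are done.

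The one subtlety to be careful about is the domain bookkeeping in $\ref{theo:9iii}\Rightarrow\ref{theo:9i}$: one must make sure that writing $A = A_+ + A_{\mathlarger{\circ}}$ as an equality of set-valued operators really does constrain $\dom A$ as claimed, i.e., that the equality is not vacuously read off on a smaller domain. Since the sum $A_+ + A_{\mathlarger{\circ}}$ has domain $\dom A_+ \cap \dom A_{\mathlarger{\circ}}$, and both of these equal $\dom A \cap \dom A^*$ by the definitions in \eqref{Fee:1} together with Fact~\ref{Rea:1}\ref{Th:28}, the equality $\dom A = \dom(A_+ + A_{\mathlarger{\circ}})$ yields $\dom A \subseteq \dom A^*$ cleanly. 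I expect this domain verification to be the only place demanding attention; everything else is a citation of Lemma~\ref{theo:5}, Fact~\ref{linear}\ref{sia:3vii}, and Theorem~\ref{theo:4}. No circular dependence arises, since Theorem~\ref{theo:4} and Lemma~\ref{theo:5} were both proved without reference to this characterization.
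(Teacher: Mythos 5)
Your proposal is correct and uses exactly the same ingredients as the paper (Lemma~\ref{theo:5} for \ref{theo:9i}$\Rightarrow$\ref{theo:9ii}, Fact~\ref{linear}\ref{sia:3vii} for \ref{theo:9ii}$\Rightarrow$\ref{theo:9iii}, Theorem~\ref{theo:4} for the return to \ref{theo:9i}, and the domain observation for \ref{theo:9iii}$\Rightarrow$\ref{theo:9ii}, which the paper simply labels ``clear''); arranging them as a cycle rather than as two equivalences is only a cosmetic difference.
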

\begin{proof}
``\ref{theo:9i}$\Rightarrow$\ref{theo:9ii}'':
 Lemma~\ref{theo:5}.
``\ref{theo:9i}$\Leftarrow$\ref{theo:9ii}'':
Theorem~\ref{theo:4}.
``\ref{theo:9ii}$\Rightarrow$\ref{theo:9iii}'':
{Fact}~\ref{linear}\ref{sia:3vii}.
``\ref{theo:9ii}$\Leftarrow$\ref{theo:9iii}'':
This is clear. 
\end{proof}

\begin{corollary}\label{Co:c1}
Let $A\colon X \To X^*$ be a maximal monotone linear relation.
Then both $A$ and $A^*$ are Borwein-Wiersma decomposable
if and only if $\dom A=\dom A^*$.
\end{corollary}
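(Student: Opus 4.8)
The plan is to deduce Corollary~\ref{Co:c1} directly from the characterization in Theorem~\ref{theo:9}, since both $A$ and $A^*$ are maximal monotone linear relations (the latter by Fact~\ref{Sv:7}) and hence Theorem~\ref{theo:9} applies to each of them. Thus $A$ is Borwein-Wiersma decomposable if and only if $\dom A\subseteq\dom A^*$, and $A^*$ is Borwein-Wiersma decomposable if and only if $\dom A^*\subseteq\dom A^{**}$.

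The one technical point to clear up is the identification $A^{**}=A$, so that the second containment reads $\dom A^*\subseteq\dom A$. Here I would invoke the standing hypothesis that $A$ is maximal monotone: maximal monotone operators have closed graph, so $\gra A$ is a closed linear subspace and Fact~\ref{Rea:1}\ref{Th:31} gives $A^{**}=A$. (Alternatively, $A^*$ is a maximal monotone linear relation by Fact~\ref{Sv:7}, so its graph is closed, and then $A^{**}=A$ follows from Fact~\ref{Rea:1}\ref{Th:31} applied to $A^*$ together with $A^*$ having closed graph; either route works.) With this in hand, applying Theorem~\ref{theo:9} to $A^*$ yields: $A^*$ is Borwein-Wiersma decomposable $\iff \dom A^*\subseteq\dom A$.

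Putting the two equivalences together: both $A$ and $A^*$ are Borwein-Wiersma decomposable if and only if $\dom A\subseteq\dom A^*$ and $\dom A^*\subseteq\dom A$, which is exactly $\dom A=\dom A^*$. I do not anticipate any real obstacle; the only thing to be careful about is making the closedness-of-graph / $A^{**}=A$ step explicit rather than taking it for granted, and making sure Theorem~\ref{theo:9} is legitimately applicable to $A^*$ (which it is, by Fact~\ref{Sv:7}).

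\begin{proof}
By Fact~\ref{Sv:7}, $A^*$ is a maximal monotone linear relation, so
Theorem~\ref{theo:9} applies to both $A$ and $A^*$. Since $A$ is maximal
monotone, $\gra A$ is a closed linear subspace of $X\times X^*$, so
Fact~\ref{Rea:1}\ref{Th:31} yields $A^{**}=A$. Applying
Theorem~\ref{theo:9}\ref{theo:9i}$\Leftrightarrow$\ref{theo:9ii} to $A$ and
to $A^*$, we obtain that $A$ is Borwein-Wiersma decomposable if and only if
$\dom A\subseteq\dom A^*$, and that $A^*$ is Borwein-Wiersma decomposable if
and only if $\dom A^*\subseteq\dom A^{**}=\dom A$. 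Therefore both $A$ and
$A^*$ are Borwein-Wiersma decomposable if and only if
$\dom A\subseteq\dom A^*$ and $\dom A^*\subseteq\dom A$, i.e., if and only if
$\dom A=\dom A^*$.
\end{proof}
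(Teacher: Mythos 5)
Your proof is correct and follows exactly the route the paper intends: its proof is the one-line instruction ``Combine Theorem~\ref{theo:9}, Fact~\ref{Sv:7}, and Fact~\ref{Rea:1}\ref{Th:31},'' and you have simply written out those combinations, including the (correct) observation that maximal monotonicity forces $\gra A$ to be closed so that $A^{**}=A$. Nothing further is needed.
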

\begin{proof}
Combine Theorem~\ref{theo:9}, Fact~\ref{Sv:7}, and
Fact~\ref{Rea:1}\ref{Th:31}.
\end{proof}

We shall now provide two examples of a linear relation $S$
to illustrate that the following do occur:
\begin{itemize}
\item $S$ is  Borwein-Wiersma decomposable, but $S^*$ is not.
\item Neither $S$ nor $S^*$ is Borwein-Wiersma decomposable.
\item $S$ is not Borwein-Wiersma decomposable, but $S^{-1}$ is.
\end{itemize}

\begin{example}
(See \cite{BWY7}.)
\label{FE:1}
Suppose that $X$ is the Hilbert space $\ell^2$, and set
\begin{align}
\label{EL:1}
S\colon \dom S\to X\colon
y\mapsto
\bigg(\thalb y_n + \sum_{i<n}y_{i}\bigg),
\end{align}
with
\begin{equation*}
\dom S:=\bigg\{ y=(y_n)\in X \;\;\bigg|\;\; \sum_{i\geq 1}y_{i}=0,
\bigg(\sum_{i\leq n}y_{i}\bigg)\in X\bigg\}.
\end{equation*}
Then
\begin{align}
\label{PF:a2}
S^*\colon \dom S^* \to X\colon
y\mapsto \bigg(\thalb y_n + \sum_{i> n}y_{i}\bigg)
\end{align}
where
\begin{equation*}
\dom S^*=\bigg\{ y=(y_n)\in X\;\; \bigg|\;\;
 \bigg(\sum_{i> n}y_{i}\bigg)\in X\bigg\}.
\end{equation*}
Then $S$ can be identified with an at most single-valued linear relation
such that the following hold.
(See \cite[Theorem~2.5]{PheSim} and \cite[Proposition~3.2, Proposition~3.5,
Proposition~3.6, and Theorem~3.9]{BWY7}.)
\begin{enumerate}
 \item
 $S$ is maximal monotone and skew.
\item $S^*$ is maximal monotone but not skew.
\item $\dom S$ is dense in $\ell^2$, and  $\dom S\subsetneqq\dom S^*$.
\item $S^* = -S$ on $\dom S$.
\end{enumerate}
In view of Theorem~\ref{theo:9},
$S$ is Borwein-Wiersma decomposable while $S^*$ is not.
However, both  $S$ and  $S^*$ are irreducible and Asplund decomposable
by Theorem~\ref{acyclicmono}.
Because $S^*$ is irreducible but not skew,
we see that the class of irreducible operators is strictly larger
than the class of skew operators.
\end{example}

\begin{example}[inverse Volterra operator]
\label{FE:2}
(See \cite[Example~4.4 and Theorem~4.5]{BWY7}.)
Suppose that $X$ is the Hilbert space $L^2[0,1]$, and consider
the \emph{Volterra integration operator}
(see, e.g., \cite[Problem~148]{Halmos}), which is defined by
\begin{equation}
\label{e:aug17:a}
V \colon X \to X \colon x \mapsto Vx, \quad\text{where}\quad
Vx\colon[0,1]\to\RR\colon t\mapsto \int_{0}^{t}x,
\end{equation}
and set $A = V^{-1}$.
Then
\begin{equation*}
V^*\colon X\to X\colon x\mapsto V^*x, \quad\text{where}\quad
V^*x\colon[0,1]\to\RR\colon t\mapsto \int_{t}^{1}x,
\end{equation*}
and the following hold.
\begin{enumerate}
\item
$\dom A=\big\{x\in X\;\big|\;\text{$x$ is absolutely continuous,
$x(0)=0$, and $x'\in X$}\big\}$ and
$$A\colon\dom A\to X\colon x\mapsto x'.$$
\item
$\dom A^*=\big\{x\in X\;\big|\;\text{$x$ is absolutely continuous,
$x(1)=0$, and $x'\in X$}\big\}$ and
$$A^*\colon\dom A^*\to X\colon x\mapsto -x'.$$
\item Both $A$ and $A^*$ are maximal monotone linear operators.
\item Neither $A$ nor $A^*$ is symmetric.
\item Neither $A$ nor $A^*$ is skew.
\item $\dom A \not\subseteq \dom A^*$, and
$\dom A^* \not\subseteq \dom A$.
\item $Y := \dom A \cap \dom A^*$ is dense in $X$.
\item Both $A+\II_Y$ and $A^*+\II_Y$ are skew.
\end{enumerate}
By {Theorem}~\ref{acyclicmono}, both
$A$ and $A^*$ are irreducible and Asplund decomposable.
On the other hand, by Theorem~\ref{theo:9},
neither $A$ nor $A^*$ is Borwein-Wiersma decomposable.
Finally, $A^{-1}=V$ and $(A^*)^{-1}=V^*$ are Borwein-Wiersma decomposable
since they are continuous linear operators with full domain.
\end{example}

\begin{remark}[an answer to a question posed by Borwein and Wiersma]
The operators $S$, $S^*$, $A$, and $A^*$ defined in this section are all
irreducible and Asplund decomposable, but none of them has full domain.
This provides an answer to \cite[Question~(4) in
Section~7]{BorweinWiersma}.
\end{remark}

\section{When $X$ is a Hilbert space}
\label{explicit}

Throughout this short section, we suppose that $X$ is a Hilbert space.
Recall (see, e.g., \cite[Chapter~5]{deutsch} for basic properties)
that if $C$ is a nonempty closed convex subset of $X$, then the
\emph{(nearest point) projector} $P_C$ is well defined and continuous.
If $Y$ is a closed subspace of $X$, then $P_Y$ is linear and
$P_Y = P_Y^*$.

\begin{definition}
Let $A\colon X \To X$ be a maximal monotone linear relation.  We define
$Q_A$ by
\begin{equation*}
Q_A:\dom A\rightarrow X:x\mapsto P_{Ax}x.
\end{equation*}
\end{definition}
Note that $Q_A$ is monotone and a single-valued selection of $A$ because
$(\forall x\in \dom A)$ $Ax$ is a
nonempty closed convex subset of $X$.

\begin{proposition}[linear selection]
\label{pa2}
Let $A\colon X \To X$ be a maximal monotone linear relation.
Then the following hold.
\begin{enumerate}
\item
\label{t:2}
$(\forall x\in\dom A)$
$Q_{A}x=P_{(A0)^\bot}(Ax)$, and $Q_{A}x\in Ax$.
\item
\label{t:2+}
$Q_A$ is monotone and linear.
\item
\label{t:2++}
$A=Q_{A}+A0.$
\end{enumerate}
\end{proposition}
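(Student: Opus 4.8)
The plan is to establish part~\ref{t:2} first, since \ref{t:2+} and \ref{t:2++} follow quickly from it together with Fact~\ref{Rea:1} and Fact~\ref{linear}. Fix $x\in\dom A$ and pick any $x^*\in Ax$. Because $A$ is a linear relation, Fact~\ref{Rea:1}\ref{Th:28} gives $Ax = x^*+A0$, so $Ax$ is an affine subspace of $X$ whose direction space is the closed subspace $A0$ (closedness of $A0$, hence of $Ax$, follows from maximal monotonicity, so that $Q_Ax=P_{Ax}x$ is well defined, as already noted). The key structural input is Fact~\ref{linear}\ref{Nov:s1}: $\dom A\subseteq (A0)^\bot$, so in particular $x\in(A0)^\bot$.

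For \ref{t:2}, note that $Q_Ax=P_{Ax}x$ exists and belongs to $Ax$ simply because $Ax$ is nonempty, closed and convex; this already yields the second assertion. The variational inequality characterizing the projection onto the convex set $Ax$, together with the fact that $Ax-Q_Ax=A0$ is a subspace, forces $x-Q_Ax\in (A0)^\bot$. Combining this with $x\in (A0)^\bot$ gives $Q_Ax = x-(x-Q_Ax)\in (A0)^\bot$, i.e.\ $P_{(A0)^\bot}(Q_Ax)=Q_Ax$. On the other hand $P_{(A0)^\bot}$ annihilates $A0$, so $P_{(A0)^\bot}$ is constant on the coset $Ax=Q_Ax+A0$; therefore $P_{(A0)^\bot}(Ax)$ is the singleton $\{Q_Ax\}$, which is exactly \ref{t:2}.

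Part~\ref{t:2+}: monotonicity holds because $Q_A$ is a single-valued selection of the monotone operator $A$. For linearity, $\dom Q_A=\dom A$ is a linear subspace, and given $x,y\in\dom A$ and $(\lambda,\mu)\in\RR^2\smallsetminus\{(0,0)\}$, Fact~\ref{Rea:1}\ref{Th:30} gives $A(\lambda x+\mu y)=\lambda Ax+\mu Ay$; choosing $x^*\in Ax$ and $y^*\in Ay$ we have $\lambda x^*+\mu y^*\in A(\lambda x+\mu y)$, and \ref{t:2} together with linearity of $P_{(A0)^\bot}$ yields $Q_A(\lambda x+\mu y)=P_{(A0)^\bot}(\lambda x^*+\mu y^*)=\lambda Q_Ax+\mu Q_Ay$; since also $Q_A0=P_{(A0)^\bot}(A0)=0$, $Q_A$ is linear. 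Part~\ref{t:2++} is then immediate: for $x\in\dom A$, \ref{t:2} gives $Q_Ax\in Ax$, whence $Ax=Q_Ax+A0$ by Fact~\ref{Rea:1}\ref{Th:28}, and as $\dom A=\dom Q_A$ we conclude $A=Q_A+A0$. The only step that requires genuine care is \ref{t:2}: one must observe that $P_{(A0)^\bot}$ collapses the entire coset $Ax$ to a single point and that this point is the nearest point of $Ax$ to $x$ — a coincidence resting squarely on the inclusion $\dom A\subseteq (A0)^\bot$.
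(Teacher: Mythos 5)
Your proof is correct and follows essentially the same route as the paper: both arguments hinge on $Ax=x^*+A0$ (Fact~\ref{Rea:1}\ref{Th:28}) and $\dom A\subseteq(A0)^\bot$ (Fact~\ref{linear}\ref{Nov:s1}) to identify $Q_Ax$ with $P_{(A0)^\bot}x^*$, and then deduce linearity from Fact~\ref{Rea:1}\ref{Th:30} and linearity of $P_{(A0)^\bot}$, and \ref{t:2++} from Fact~\ref{Rea:1}\ref{Th:28}. The only cosmetic difference is that you obtain $x-Q_Ax\in(A0)^\bot$ from the variational characterization of the projection onto the affine set $Ax$, whereas the paper computes $P_{x^*+A0}x=x^*+P_{A0}(x-x^*)=P_{(A0)^\bot}x^*$ directly.
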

\begin{proof}
Let $x\in\dom A = \dom Q_A$ and let $x^*\in Ax$.
Using Fact~\ref{Rea:1}\ref{Th:28} and {Fact}~\ref{linear}\ref{Nov:s1},
we see that
\begin{align*}
Q_{A}x&=P_{Ax}x=P_{x^*+A0}x = x^*+P_{A0}(x-x^*)=x^*+P_{A0}x-P_{A0}x^*
=P_{A0}x+P_{(A0)^\bot}x^*\\
&=P_{(A0)^\bot}x^*.
\end{align*}
Since $x^*\in Ax$ is arbitrary, we have thus verified \ref{t:2}.
Now let $x$ and $y$ be in $\dom A$, and let $\alpha$ and $\beta$ be in
$\RR$.
If $\alpha=\beta=0$, then, by Fact~\ref{Rea:1}\ref{Th:26},
we have
$Q_A (\alpha x+\beta y)=Q_A0=P_{A0}0=0=\alpha Q_A x+\beta Q_A y$.
Now assume that $\alpha\neq 0$ or $\beta\neq 0$.
By \ref{t:2} and Fact~\ref{Rea:1}\ref{Th:30}, we have
 \begin{equation*}
 Q_A(\alpha x+\beta y)=P_{(A0)^\bot}A(\alpha x+\beta y)=
\alpha P_{(A0)^\bot}(Ax)+\beta P_{(A0)^\bot}
(Ay) =\alpha Q_A x+\beta Q_A y.
\end{equation*}
Hence $Q_A$ is a linear selection of $A$ and \ref{t:2+} holds.
Finally, \ref{t:2++} follows from Fact~\ref{Rea:1}\ref{Th:28}.
\end{proof}

\begin{example}
\label{theo:40}
Let $A\colon X \To X$ be maximal monotone and skew.
Then $A=\partial\iota_{\overline{\dom A}}+Q_A$ is a
Borwein-Wiersma decomposition.
\end{example}
\begin{proof}
By Proposition~\ref{pa2}\ref{t:2+}, $Q_A$ is
a linear selection of $A$. Now apply Theorem~\ref{Sk:1}.
\end{proof}

\begin{example}\label{EBor:1}
Let $A\colon X \To X$ be a maximal monotone linear relation such
that $\dom A$ is closed.
Set $B := P_{\dom A}Q_A P_{\dom A}$ and $f := q_B+\iota_{\dom A}$.
Then the following hold.
\begin{enumerate}
\item
\label{EBor:1i}
$B\colon X\to X$ is continuous, linear, and maximal monotone.
\item
\label{EBor:1ii}
$f\colon X\to\RX$ is convex, lower semicontinuous, and proper.
\item
\label{EBor:1iii}
$A = \partial\iota_{\dom A} + B$.
\item
\label{EBor:1iv}
$\partial f + B_{\mathlarger{\circ}}$
is a Borwein-Wiersma decomposition of $A$.
\end{enumerate}
\end{example}
\begin{proof}
\ref{EBor:1i}:
By Proposition~\ref{pa2}\ref{t:2+},
$Q_A$ is monotone and  a linear selection of $A$. Hence,
$B\colon X\to X$ is linear; moreover,
$(\forall x\in X)$
$\langle x,B
x\rangle=\langle x,\ P_{\dom A}Q_A P_{\dom A}x\rangle
=\langle P_{\dom A}x,\ Q_A P_{\dom A}x\rangle\geq 0$.
Altogether, $B\colon X\to X$ is linear and monotone.
By Fact~\ref{F:1}, $B$ is continuous and maximal monotone.

\ref{EBor:1ii}:
By \ref{EBor:1i}, $q_B$ is thus convex and continuous;
in turn, $f$ is convex, lower semicontinuous, and proper.

\ref{EBor:1iii}:
Using Proposition~\ref{pa2}\ref{t:2} and {Fact}~\ref{linear}\ref{sia:3iv},
we have $(\forall x\in X)$
$(Q_AP_{\dom A}) x\in (A0)^\bot=\overline{\dom A}=\dom A$.
Hence,
$(\forall x\in \dom A)$
$Bx=(P_{\dom A}Q_AP_{\dom A})x=Q_Ax\in Ax$.
Thus, $B+\II_{\dom A} = Q_A$.
In view of Proposition~\ref{pa2}\ref{t:2++}
and {Fact}~\ref{linear}\ref{sia:3iv},
we now obtain
$A = B+\II_{\dom A} + A0 = B+\partial\iota_{\dom A}$.

\ref{EBor:1iv}:
It follows from \ref{EBor:1iii} and \eqref{capitalnews} that
$A = B+ \partial\iota_{\dom A} = \nabla q_{B}+\partial \iota_{\dom A}+B_{\mathlarger{\circ}}
 =\partial
(q_{B}+\iota_{\dom A})+B_{\mathlarger{\circ}}
=\partial
f+B_{\mathlarger{\circ}}$.
\end{proof}


\begin{proposition}
Let $A:X\To X$ be such that $\dom A$ is a closed subspace of $X$.
Then $A$ is a maximal monotone linear relation
$\Leftrightarrow$
$A=\partial\iota_{\dom A} + B$, where
$B:X\to X$ is linear and monotone.
\end{proposition}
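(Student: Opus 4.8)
The plan is to prove both implications, with the forward direction being essentially a special case of Example~\ref{EBor:1} and the reverse direction being a short computation using Lemma~\ref{dilation} or Fact~\ref{linear}\ref{sia:3v}.

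For ``$\Rightarrow$'': Suppose $A$ is a maximal monotone linear relation with $\dom A$ closed. I would invoke Example~\ref{EBor:1}\ref{EBor:1iii} directly: with $B := P_{\dom A}Q_A P_{\dom A}$, we have that $B\colon X\to X$ is linear and monotone (Example~\ref{EBor:1}\ref{EBor:1i}) and $A = \partial\iota_{\dom A} + B$ (Example~\ref{EBor:1}\ref{EBor:1iii}). This is exactly the claimed form. So the forward direction is immediate from the work already done; essentially nothing new is needed beyond citing the right pieces of Example~\ref{EBor:1}.

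For ``$\Leftarrow$'': Suppose $A = \partial\iota_{\dom A} + B$ where $B\colon X\to X$ is linear and monotone. Write $Y := \dom A$, a closed subspace, so $\partial\iota_Y = N_Y$ has $\gra N_Y = Y\times Y^\bot$ and $N_Y 0 = Y^\bot$. Since $B$ is single-valued with full domain, $A$ is a linear relation: $\gra A = \{(y, By + y^*) \mid y\in Y,\ y^*\in Y^\bot\}$ is a linear subspace of $X\times X$, with $\dom A = Y$ and $A0 = Y^\bot = (\dom A)^\bot$. Monotonicity of $A$ follows from monotonicity of $B$ together with $\langle y, y^*\rangle = 0$ for $y\in Y$, $y^*\in Y^\bot$: for $(y_1, By_1 + y_1^*), (y_2, By_2 + y_2^*)\in\gra A$ we get $\langle y_1 - y_2, B(y_1-y_2) + (y_1^*-y_2^*)\rangle = \langle y_1-y_2, B(y_1-y_2)\rangle \geq 0$. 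Finally, since $\dom A = Y$ is closed and $(\dom A)^\bot = Y^\bot = A0$, Fact~\ref{linear}\ref{sia:3v} gives that $A$ is maximal monotone. (Alternatively, one could observe $A = (B|_Y + \II_Y) + Y^\bot$ and apply Lemma~\ref{dilation} with $Z = Y$, but the direct verification via Fact~\ref{linear}\ref{sia:3v} is cleaner.)

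The main obstacle, such as it is, is purely bookkeeping: making sure that $\partial\iota_{\dom A}$ really is the indicator-subdifferential $N_{\dom A}$ (which needs $\dom A$ closed and convex, here a closed subspace, so this is fine) and that adding a full-domain single-valued $B$ does not enlarge the domain of the sum beyond $\dom A$ — both are routine. There is no deep difficulty; the statement is a clean packaging of Example~\ref{EBor:1} plus Fact~\ref{linear}\ref{sia:3v}.
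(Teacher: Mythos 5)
Your forward direction coincides with the paper's: both simply cite Example~\ref{EBor:1}\ref{EBor:1i}\&\ref{EBor:1iii}. Your reverse direction is correct but takes a genuinely different route. The paper argues abstractly: by Fact~\ref{F:1} the full-domain linear monotone operator $B$ is continuous and maximal monotone, and since $\partial\iota_{\dom A}$ is also maximal monotone with $B$ everywhere defined, Rockafellar's sum theorem \cite{Rockf} gives maximal monotonicity of the sum. You instead stay inside the linear-relation toolkit developed in Section~\ref{aux1}: you compute $\gra A$ explicitly, check monotonicity by hand using $\scal{y}{y^*}=0$ for $y\in Y$, $y^*\in Y^\bot$, observe $A0=Y^\bot=(\dom A)^\bot$, and conclude via the maximality criterion Fact~\ref{linear}\ref{sia:3v} (or equivalently via Lemma~\ref{dilation} applied to $B$ with $Z=Y$). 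Your argument is more elementary and self-contained — it avoids invoking the sum theorem for general maximal monotone operators — at the cost of a slightly longer verification; the paper's version is shorter but leans on heavier external machinery. Both are valid, and your bookkeeping (that $\partial\iota_Y$ is the normal cone $Y\times Y^\bot$ and that adding the full-domain $B$ keeps the domain equal to $Y$) is sound.
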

\begin{proof}
``$\Rightarrow$'': This is clear from
Example~\ref{EBor:1}\ref{EBor:1i}\&\ref{EBor:1iii}.
``$\Leftarrow$'': Clearly, $A$ is a linear relation.
By Fact~\ref{F:1}, $B$ is continuous and maximal
monotone. Using Rockafellar's sum theorem \cite{Rockf}, we conclude
that $\partial\iota_{\dom A} + B$ is maximal monotone.
\end{proof}


\section{Conclusion}\label{summary}
The original papers by Asplund \cite{Asplund} and by Borwein and Wiersma
\cite{BorweinWiersma} concerned the additive decomposition of a maximal
monotone operator whose domain has \emph{nonempty interior}.
In this paper, we focused on maximal monotone \emph{linear relations} and
we specifically allowed for domains with empty interior.
All maximal monotone linear relations on finite-dimensional spaces
are Borwein-Wiersma decomposable; however, this fails
in infinite-dimensional settings.
We presented characterizations of Borwein-Wiersma
decomposability of maximal monotone linear relations
in reflexive Banach spaces and provided a more explicit decomposition
in Hilbert spaces.

The characterization of Asplund decomposability and the corresponding
construction of an Asplund decomposition
remain interesting unresolved topics for future explorations,
even for maximal monotone linear operators whose domains are proper
dense subspaces of infinite-dimensional Hilbert spaces.

\section*{Acknowledgments}
Heinz Bauschke was partially supported by the Natural Sciences and
Engineering Research Council of Canada and
by the Canada Research Chair Program.
Xianfu Wang was partially supported by the Natural
Sciences and Engineering Research Council of Canada.




\begin{thebibliography}{99}

\bibitem{Asplund}
E.\ Asplund,
``A monotone convergence theorem for sequences of nonlinear mappings",
\emph{Nonlinear Functional Analysis},
Proceedings of Symposia in Pure Mathematics,
American Mathematical Society, vol.~XVIII Part 1, Chicago,
pp.~1--9, 1970.





\bibitem{BauBorPJM}
H.H.\ Bauschke and J.M.\ Borwein, ``Maximal monotonicity of
dense type, local maximal monotonicity, and monotonicity of the
conjugate are all the same for continuous linear operators",
\emph{Pacific Journal of Mathematics,} vol.~189, pp.~1--20, 1999.

\bibitem{BWY3}
H.H.\ Bauschke, X.\ Wang, and L.\ Yao,
``Monotone linear relations: maximality
and Fitzpatrick functions'',
\emph{Journal of Convex Analysis}, vol.~16, pp.~673--686, 2009.

\bibitem{BWY7}
H.H.\ Bauschke, X.\ Wang, and L.\ Yao,
``Examples of discontinuous
maximal monotone linear operators
and the solution to a recent problem posed by B.F.~Svaiter'', submitted;\\
\texttt{http://arxiv.org/abs/0909.2675v1}, September 2009.

\bibitem{Borwein}
J.M.\ Borwein, ``Adjoint process duality'',
\emph{Mathematics of Operations Research}, vol.~8, pp.~403--434, 1983.


\bibitem{BorVan}
J.M.\ Borwein and J.D.\ Vanderwerff,
\emph{Convex Functions},
Cambridge University Press, 2010.


\bibitem{BorweinWiersma}
J.M.\ Borwein and H.\ Wiersma, ``Asplund decomposition of
monotone operators",
\emph{SIAM Journal on Optimization,} vol.~18, pp.~946--960, 2007.

\bibitem{Brezis-Browder}
H.\ Br\'{e}zis and F.E.\ Browder,
``Linear maximal monotone operators
and singular nonlinear integral equations of Hammerstein
type'', in \emph{Nonlinear Analysis (collection of papers in honor of
Erich H.\ Rothe)}, Academic Press, pp.~31--42, 1978.

\bibitem{BurachikIusem}
R.S.\ Burachik and A.N.\ Iusem,
\emph{Set-Valued Mappings and Enlargements of Monotone Operators},
Springer-Verlag, 2008.



\bibitem{BK}
D.\ Butnariu and G.\ Kassay,
``A proximal-projection method for finding
zeros of set-valued operators'',
\emph{SIAM Journal on Control and Optimization},
vol.~47, pp.~2096--2136, 2008.


\bibitem{censor}
Y. Censor, A.N.\ Iusem and S.A.\ Zenios,
``An interior point method with Bregman functions
for the variational inequality problem with paramonotone operators'',
\emph{Mathematical Programming Ser.~A}, vol.~81, pp.~373--400,
1998.


\bibitem{Cross}
R.\ Cross,
\emph{Multivalued Linear Operators},
Marcel Dekker,
New York, 1998.

\bibitem{deutsch}
F.\ Deutsch, \emph{Best Approximation in Inner Product Spaces},
Springer-Verlag, New York, 2001.

\bibitem{Halmos}
P.R.\ Halmos, \emph{A Hilbert Space Problem Book},
Van Nostrand Reinbold, New York,
1967.

\bibitem{iusem}
A.N.\ Iusem, ``On some properties of paramonotone operators'',
\emph{Journal of Convex Analysis},
vol.~5, pp.~269--278, 1998.

\bibitem{ph}
R.R.\ Phelps,
\emph{Convex Functions, Monotone Operators and
Differentiability},
2nd Edition, Springer-Verlag, 1993.

\bibitem{PheSim}
R.R.\ Phelps and S.\ Simons,
``Unbounded linear monotone operators on nonreflexive Banach spaces'',
\emph{Journal of Convex Analysis}, vol.~5, pp.~303--328, 1998.


\bibitem{Rocky}
R.T.\ Rockafellar,
\emph{Convex Analysis},
Princeton University Press, 1970.


\bibitem{RockWets}
R.T.\ Rockafellar and R.J-B Wets,
\emph{Variational Analysis},
Springer-Verlag, 1998.

\bibitem{rockprox}
R.T.\ Rockafellar,
``Monotone operators and the proximal point algorithm'',
\emph{SIAM Journal on Control and Optimization},
vol.~14, pp.~877--898, 1976.

\bibitem{Rockf}
R.T.\ Rockafellar,
``On the maximality of sums of nonlinear monotone operators'',
\emph{Transactions of the AMS},
vol.~149, pp.~75--88, 1970.

\bibitem{Rocksub}
R.T.\ Rockafellar,
 ``On the maximal monotonicity of subdifferential mappings'',
\emph{Pacific Journal of Mathematics},
vol.~33, pp.~209-216, 1970.


\bibitem{Si}
S.\ Simons,
\emph{Minimax and Monotonicity},
Springer-Verlag, New York, 1998.

\bibitem{Si2}
S.\ Simons,
\emph{From Hahn-Banach to Monotonicity},
Springer-Verlag, 2008.

\bibitem{SiZ}
S.\ Simons and C.\ Z{\v{a}}linescu,
``Fenchel duality, Fitzpatrick functions
and maximal monotonicity'',
\emph{Journal of Nonlinear and Convex Analysis},
vol.~6, pp. 1--22, 2005.

\bibitem{SV}
B.F.\ Svaiter,
``Non-enlargeable operators and self-cancelling operators'',
 \emph{Journal of Convex Analysis},
 vol.~17, 2010, to appear;
\texttt{http://arxiv.org/abs/0807.1090v2}, July 2008.


\bibitem{Voi}
M.D.\ Voisei,
``The sum theorem for linear maximal monotone operators'',
\emph{Mathematical Sciences Research Journal},
vol.~10, pp.~83--85, 2006.

\bibitem{VZ08}
M.D.\ Voisei and C.\ Z{\u{a}}linescu,
``Linear monotone subspaces of locally convex spaces'',
\texttt{http://arxiv.org/abs/0809.5287v1}, September 2008.

\bibitem{Yao}
L.\ Yao,
``The Br\'{e}zis-Browder Theorem revisited and properties of Fitzpatrick functions of order $n$'',
submitted;
\texttt{http://arxiv.org/abs/0905.4056v1}, May 2009.



\bibitem{Zalinescu}{C.\ Z\u{a}linescu},
\emph{Convex Analysis in General Vector Spaces}, World Scientific
Publishing, 2002.
\end{thebibliography}
\end{document}